%% This document created by Scientific Word (R) Version 3.0

\documentclass[11pt]{article}%
\usepackage{amssymb}
\usepackage{graphicx}
\usepackage{amsmath}
\usepackage{amsfonts}%
\setcounter{MaxMatrixCols}{30}
%TCIDATA{OutputFilter=latex2.dll}
%TCIDATA{Version=5.50.0.2953}
%TCIDATA{CSTFile=LaTeX article (bright).cst}
%TCIDATA{Created=Thu Oct 28 13:32:14 1999}
%TCIDATA{LastRevised=Friday, October 03, 2008 15:10:10}
%TCIDATA{<META NAME="GraphicsSave" CONTENT="32">}
%TCIDATA{<META NAME="SaveForMode" CONTENT="1">}
%TCIDATA{BibliographyScheme=Manual}
%TCIDATA{<META NAME="DocumentShell" CONTENT="Journal Articles\Standard LaTeX Article">}
%BeginMSIPreambleData
\providecommand{\U}[1]{\protect\rule{.1in}{.1in}}
%EndMSIPreambleData
\setlength{\oddsidemargin}{-0.05in}
\setlength{\evensidemargin}{-0.05in}
\setlength{\textwidth}{6.5in}
\newtheorem{theorem}{Theorem}[section]

\newtheorem{lemma}[theorem]{Lemma}

\newtheorem{proposition}[theorem]{Proposition}
\newtheorem{remark}[theorem]{Remark}

\newenvironment{proof}[1][Proof]{\textbf{#1.} }{\hfill\rule{0.5em}{0.5em}}
{\catcode`\@=11\global\let\AddToReset=\@addtoreset
\AddToReset{equation}{section}

\AddToReset{theorem}{section}

\begin{document}

\title{The p-Laplace heat equation with a source term : self-similar solutions revisited}
\date{.}
\author{Marie Fran\c{c}oise Bidaut-V\'{e}ron\thanks{Laboratoire de Math\'{e}matiques
et Physique Th\'{e}orique, CNRS UMR 6083, Facult\'{e} des Sciences, Parc
Grandmont, 37200 Tours, France. e-mail:veronmf@univ-tours.fr}}
\maketitle

\begin{abstract}
We study the self-similar solutions of any sign of the equation
\[
u_{t}-div(\left\vert \nabla u\right\vert ^{p-2}\nabla u)=\left\vert
u\right\vert ^{q-1}u,
\]
in $\mathbb{R}^{N},$ where $p,q>1.$ We extend the results of Haraux-Weissler
obtained for $p=2$ to the case $q>p-1>0.$ In particular we study the existence
of slow or fast decaying solutions. For given $t>0$, the fast solutions
$u(t,.)$ have a compact support in $\mathbb{R}^{N}$ when $p>2,$ and
$\left\vert x\right\vert ^{p/(2-p)}u(t,x)$ is bounded at infinity when $p<2.$
We describe the behaviour for large $\left\vert x\right\vert $ of all the
solutions. According to the position of $q$ with respect to the first critical
exponent $p-1+p/N$ and the critical Sobolev exponent $q^{\ast}$, we study the
existence of positive solutions, or the number of the zeros of $u(t,.)$. We
prove that any solution $u(t,.)$ is oscillatory when $p<2$ and $q$ is closed
to $1.$

\end{abstract}
\tableofcontents

.\pagebreak

\section{ Introduction and main results\label{intro}}

In this paper we study the existence of self-similar solutions of degenerate
parabolic equations with a source term, involving the $p$-Laplace operator in
$\mathbb{R}^{N}\times\left(  0,\infty\right)  ,$ $N\geq1,$
\begin{equation}
u_{t}-div(\left\vert \nabla u\right\vert ^{p-2}\nabla u)=\left\vert
u\right\vert ^{q-1}u, \label{lap}%
\end{equation}
where $p>1,q>1.$ The semilinear problem, relative to the case $p=2,$\textit{ }%
\begin{equation}
u_{t}-\Delta u=\left\vert u\right\vert ^{q-1}u, \label{ord}%
\end{equation}
has been treated by \cite{HW}, and \cite{W1}, \cite{W2}, \cite{PTW}. In
particular, for any $a>0,$ there exists a self-similar solution of the form
\[
u=t^{-1/(q-1)}\omega(t^{-1/2}\left\vert x\right\vert )
\]
of (\ref{ord}), unique, such that $\omega\in C^{2}(\left[  0,\infty\right)
),$ $\omega(0)=a$ and $\omega^{\prime}(0)=0.$ Any solution of this form
satisfies $\lim_{\left\vert \xi\right\vert \rightarrow\infty}\left\vert
\xi\right\vert ^{2/(q-1)}\omega(\xi)=L\in\mathbb{R}.$ It is called
\textit{slowly decaying} if $L\neq0$ and \textit{fast decaying }if $L=0.$ Let
us recall the main results:$\medskip$

$\bullet$ \textit{If }$(N+2)/N<q,$\textit{ there exist positive solutions}%
$.\medskip$

$\bullet$\textit{ If }$(N+2)/N<q<(N+2)/(N-2),$\textit{ there exist positive
solutions of each type; in particular there exists a fast decaying one with an
exponential decay: }%
\[
\lim_{\left\vert z\right\vert \rightarrow\infty}e^{\left\vert z\right\vert
^{2}/4}\left\vert z\right\vert ^{N-2/(q-1)}\omega(z)=A\in\mathbb{R},
\]
\textit{thus the solution }$u$\textit{ of (\ref{ord}) satisfies }$u(.,t)\in
L^{s}(R^{N})$\textit{ for any }$s\geq1,$ \textit{and }$\lim_{t\rightarrow
0}\left\Vert u(.,t)\right\Vert _{s}=0$\textit{ whenever }$s<N(q-1)/2,$ and
\textit{ }$\lim_{t\rightarrow0}\sup_{\left\vert x\right\vert \geq\varepsilon
}\left\vert u(x,t)\right\vert =0$\textit{ for any }$\varepsilon>0.$\textit{
Moreover for any integer }$m\geq1,$\textit{ there exists a fast decaying
solution }$\omega$ \textit{with precisely }$m$\textit{ zeros.}$\medskip$

$\bullet$\textit{ If }$(N+2)/(N-2)\leq q$\textit{ all the solutions }%
$\omega\not \equiv 0$ \textit{have a constant sign and a slow decay.}%
$\medskip$

$\bullet$\textit{ If }$q\leq(N+2)/N,$\textit{ then all the solutions }%
$\omega\not \equiv 0$ \textit{have a finite positive number of zeros, and
there exists an infinity of solutions of each type.}$\medskip$

The uniqueness of the positive fast decaying solution was proved later in
\cite{Ya} and \cite{DoHi}, and more results about the solutions can be found
in \cite{HiYa1}, \cite{Hi} and \cite{HiYa2}.\smallskip

Next we assume $p\neq2.$ If $u$ is a solution of (\ref{lap}), then for any
$\alpha_{0},\beta_{0}\in\mathbb{R},$ $u_{\lambda}(x,t)=\lambda^{\alpha_{0}%
}u(\lambda x,\lambda^{\beta_{0}}t)$ is a solution if and only if
\[
\alpha_{0}=p/(q+1-p),\qquad\beta_{0}=(q-1)\alpha_{0},
\]
This leads to search self-similar solutions of the form%
\begin{equation}
u(x,t)=(\beta_{0}t)^{-1/(q-1)}w(r),\qquad r=(\beta_{0}t)^{-1/\beta_{0}%
}\left\vert x\right\vert , \label{for}%
\end{equation}
the equation reduces to
\begin{equation}
\left(  \left\vert w^{\prime}\right\vert ^{p-2}w^{\prime}\right)  ^{\prime
}+\frac{N-1}{r}\left\vert w^{\prime}\right\vert ^{p-2}w^{\prime}+rw^{\prime
}+\alpha_{0}w+\left\vert w\right\vert ^{q-1}w=0\qquad\text{in }\left(
0,\infty\right)  . \label{pre}%
\end{equation}
In the sequel, some critical exponents are involved:%
\[
p_{1}=\frac{2N}{N+1},\qquad p_{2}=\frac{2N}{N+2},
\]%
\[
q_{1}=p-1+\frac{p}{N},\qquad q^{\ast}=\frac{N(p-1)+p}{N-p};
\]
with the convention $q^{\ast}=\infty$ if $N\leq p.$ Observe that
$p-1<q_{1}<q^{\ast};$ moreover $p_{1}<p\Leftrightarrow1<q_{1},$ and
$p_{2}<p\Leftrightarrow1<q^{\ast}$.\medskip\ We also set
\begin{equation}
\delta=\frac{p}{2-p}.\quad\text{and \quad}\eta=\frac{N-p}{p-1}, \label{del}%
\end{equation}
thus $\delta>0\Longleftrightarrow$ $p<2.$ Notice that
\begin{align}
p_{1}  &  <p<2\Longleftrightarrow N<\delta\Longleftrightarrow\eta
<N,\label{dn}\\
p_{2}  &  <p<2\Longleftrightarrow N<2\delta. \label{ddn}%
\end{align}

Problem (\ref{lap}) was studied before in \cite{Qi}. In the range
$q_{1}<q<q^{\star}$ and $p_{1}<p,$ the existence of a nonnegative solution $u$
was claimed, such that $w$ has a compact support when $p>2,$ or $w>0$ when
$p<2,$ with $w(z)=o($ $\left\vert z\right\vert ^{(-p+\varepsilon)/(2-p)})$ at
infinity, for any small $\varepsilon>0.$ However some parts of the proofs are
not clear. The equation was studied independently for $p>2$ in \cite{BG}, but
the existence of a nonnegative solution with compact support was not
established, and some proofs are incomplete. Here we clarify and improve the
former assertions, treat the case $p\leq p_{1},$ and give new informations on
the existence of changing sign solutions. In particular a new phenomenon
appears, namely the possible existence of an infinity of zeros of $w.$ Also
all the solutions have a constant sign when $p\leq p_{2}.$

\begin{theorem}
\label{prin} Let $q>\max(1,p-1).$ (i) For any $a>0,$ there exists a
self-similar solution of the form
\begin{equation}
u(t,x)=(\beta_{0}t)^{-1/(q-1)}w((\beta_{0}t)^{-1/\beta_{0}}\left\vert
x\right\vert ) \label{ssi}%
\end{equation}
of (\ref{pre}), unique, such that $w\in C^{2}\left(  \left(  0,\infty\right)
\right)  \cap C^{1}\left(  \left[  0,\infty\right)  \right)  ,$ $w(0)=a$ and
$w^{\prime}(0)=0.$ Any solution of this form satisfies $\lim_{\left\vert
z\right\vert \rightarrow\infty}\left\vert z\right\vert ^{\alpha_{0}}%
w(z)=L\in\mathbb{R}.\medskip$

\noindent(ii) If $q_{1}<q,$ there exists positive solutions with $L>0,$ also
called slow decaying.$\medskip$

\noindent(iii) If $q_{1}<q<q^{\star},$ there exists a nonnegative solution
$w\not \equiv 0$ such that $L=0,$ called fast decaying, and
\[
u(t)\in L^{s}(\mathbb{R}^{N})\quad\text{for any }s\geq1,\qquad\lim
_{t\rightarrow0}\left\Vert u(t)\right\Vert _{s}=0\quad\text{whenever
}s<N/\alpha_{0},
\]%
\[
\lim_{t\rightarrow0}\sup_{\left\vert x\right\vert \geq\varepsilon}\left\vert
u(x,t)\right\vert =0\quad\text{for any }\varepsilon>0.
\]

\noindent More precisely, when $p>2,$ $w$ has a compact support in $\left(
0,\infty\right)  ;$ when $p<2,$ $w$ is positive and
\begin{equation}%
\begin{array}
[c]{c}%
\lim_{\left\vert z\right\vert \rightarrow\infty}\left\vert z\right\vert
^{p/(2-p)}w(z)=\ell(N,p,q)>0\qquad\qquad\text{if\quad}p_{1}<p<2,\\
\lim_{\left\vert z\right\vert \rightarrow\infty}\left\vert z\right\vert
^{(N-p)/(p-1)}w(z)=c>0\qquad\qquad\qquad\text{if\quad}1<p<p_{1},\\
\lim_{r\rightarrow\infty}r^{N}(\ln r)^{(N+1)/2}w=\varrho(N,p,q)>0\qquad
\qquad\text{if\quad}p=p_{1}\text{.}%
\end{array}
\label{kil}%
\end{equation}

\noindent(iv) If $q_{1}<q<q^{\star},$ for any integer $m\geq1,$ there exists a
fast decaying solution $w\not \equiv 0$ with at least $m$ isolated zeros and a
compact support when $p>2$; there exists a fast decaying solution $w$
precisely $m$ zeros, and $\left\vert w\right\vert $ has the behaviour
(\ref{kil}) when $p<2$.$\medskip$

\noindent(v) If $p\leq p_{2},$ or if $p>p_{2}$ and $q\geq q^{\star},$ all the
solutions $w\not \equiv 0$ have a constant sign and are slowly
decaying.$\medskip$

\noindent(vi) If $q\leq q_{1},$ (hence $p_{1}<p),$ all the solutions
$w\not \equiv 0$ assume both positive and negative values. There exists an
infinity of fast decaying solutions, such that $w$ has a compact support when
$p>2,$ and $\left\vert z\right\vert ^{p/(2-p)}w(z)$ is bounded near $\infty$
when $p<2.$ Moreover if $p<2,$ and $q$ is close to $q_{1},$ and $p$ close to
$2,$ then all the solutions $w\not \equiv 0$ have a finite number of zeros. If
$p<2$ and $q$ is close to $1,$ all of them are oscillatory.
\end{theorem}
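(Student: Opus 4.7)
My plan is a radial shooting analysis of the ODE (\ref{pre}) in the parameter $a = w(0)$, combined with a Pohozaev-type identity and a careful asymptotic linearization at $r = \infty$.

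For part (i), I would rewrite (\ref{pre}) in divergence form as $(r^{N-1}|w'|^{p-2}w')' = -r^{N-1}[rw' + \alpha_{0} w + |w|^{q-1}w]$, integrate twice, and apply a Banach fixed point to the unknown $\phi := |w'|^{p-2}w'$ on a short interval $[0,r_{0}]$; the degeneracy at $r=0$ is harmless because $w'(0)=0$. Continuous dependence on $a$ follows from the same contraction. Global extension comes from the energy $E = \tfrac{p-1}{p}|w'|^{p} + \tfrac{\alpha_{0}}{2}w^{2} + \tfrac{1}{q+1}|w|^{q+1}$, whose derivative under (\ref{pre}) contains a non-positive dissipation $-(N-1)r^{-1}|w'|^{p} - r(w')^{2}$ plus controllable cross-terms. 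For the limit $L$, I would study $J(r) := r^{\alpha_{0}}w(r)$: a direct manipulation of (\ref{pre}) turns the equation for $J$ into a first-order perturbed ODE whose right-hand side is integrable at infinity as soon as $q>1$, so $J$ converges.

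For parts (ii), (iii), (v), I would shoot continuously in $a$. A small-$a$ expansion, together with a large-$a$ rescaling $w(r) = a\widetilde{w}(a^{(q-1)/p}r)$ driven by the nonlinearity, shows that $L(a) > 0$ in both extremes when $q > q_{1}$, which gives positive slowly decaying solutions (part (ii)). For (iii), the continuous map $a \mapsto L(a)$ must vanish at some critical $a^{*}$ in the range $q_{1} < q < q^{\ast}$, producing a fast-decaying solution; the $L^{s}$ bounds and the limits as $t \to 0$ then follow from (\ref{ssi}) and (\ref{kil}). The decay rates (\ref{kil}) themselves are obtained by linearizing (\ref{pre}) at $r = \infty$ about the formal profile $r^{-\delta}$ when $p_{1}<p<2$, about $r^{-\eta}$ when $1<p<p_{1}$, and with a logarithmic correction at the borderline $p=p_{1}$. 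Part (v) uses the Pohozaev-type identity produced by multiplying (\ref{pre}) by a combination $r^{N} w' + \gamma r^{N-1} w$ with $\gamma$ chosen to eliminate the indefinite term: the resulting bulk integrand has a definite sign precisely when $p \le p_{2}$ or $q \ge q^{\ast}$, forcing any non-trivial solution to be of constant sign and slowly decaying.

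For (iv) and (vi), denote by $\mathcal{N}(a)$ the number of zeros of $w(\cdot;a)$ in $(0,\infty)$. A Sturm-type comparison makes $\mathcal{N}$ lower semi-continuous in $a$, and $\mathcal{N}$ can jump only at fast-decaying shooting values, where it drops by exactly one. When $q \le q_{1}$ the Pohozaev identity flips sign, forcing $\mathcal{N}(a) \ge 1$ for every $a > 0$; a rescaling then shows $\mathcal{N}(a) \to \infty$ as $a \to \infty$, giving the infinitely many fast-decaying solutions stated in (vi). In the range $q_{1} < q < q^{\ast}$, zooming near the critical $a^{*}$ produces, for each integer $m \ge 1$, a fast-decaying solution with exactly $m$ zeros, which proves (iv). The dichotomy in the last sentence of (vi) follows from the substitution $w(r) = r^{-\delta} v(\log r)$, which converts (\ref{pre}) into an autonomous limiting ODE whose characteristic roots are real for $p$ close to $2$ and $q$ close to $q_{1}$ (finitely many zeros) and complex for $q$ close to $1$ (infinite oscillation).

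The step I expect to be hardest is the identification of the precise asymptotic constants in (\ref{kil}) for part (iii), especially the strict positivity $\ell(N,p,q) > 0$ when $p_{1} < p < 2$: one has to exclude the spurious subdominant decay $r^{-\eta}$, which is admissible for the linearized equation but not for the full nonlinear problem with drift. This will require sharp matching super- and sub-solutions of the form $r^{-\delta}(1 \pm o(1))$ and a delicate balancing of $rw'$ against the $p$-Laplacian at infinity. A secondary subtlety is that the oscillation in (vi) near $q = 1$ must be shown \emph{genuinely infinite} rather than merely damped, which in the degenerate $p$-Laplacian setting requires a careful comparison with an explicit linear Bessel-type model.
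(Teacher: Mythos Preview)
Your overall architecture is close to the paper's, but the shooting argument you sketch for parts (ii)--(iii) contains a genuine error that would make the proof of (iii) collapse. You assert that the large-$a$ rescaling shows $L(a)>0$ ``in both extremes'' when $q>q_{1}$, and then conclude that the continuous map $a\mapsto L(a)$ must vanish somewhere. Both the premise and the inference are wrong. First, for large $a$ and $q<q^{\ast}$ the rescaling (with the correct exponent $a^{1/\alpha_{0}}$, not $a^{(q-1)/p}$) sends the equation to $-\Delta_{p}v=v^{q}$, which has \emph{no} nonnegative nontrivial radial solution in the subcritical range; hence $w(\cdot,a)$ acquires zeros for large $a$, and $L(a)>0$ is simply false there. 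Second, even if $L>0$ held at both ends, continuity of $L$ would not force a zero of $L$. The correct mechanism, which the paper uses, is an open-set argument: the set $A=\{a>0: w(\cdot,a)>0 \text{ and } L(a)>0\}$ is open (this needs the continuity of $L$ you mention, but more: uniform control of $(1+r)^{\alpha_{0}}w(r,a)$ in $a$), the set $B=\{a>0: w(\cdot,a) \text{ has an isolated zero}\}$ is open by continuous dependence, small $a$ lies in $A$, large $a$ lies in $B$ by the blow-up above, and any $a\notin A\cup B$ gives a nonnegative $w$ with $L=0$.

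A second, smaller gap: your treatment of the oscillation dichotomy in (vi) via ``characteristic roots of the autonomous limiting ODE'' is the right heuristic for locating the threshold $\alpha^{\ast}$, but it does not by itself prove either conclusion. The limiting system is only reached asymptotically, and a linear eigenvalue dichotomy at the fixed point $M_{\ell}$ does not automatically transfer to the nonautonomous perturbation. The paper's argument for finitely many zeros when $\alpha<\alpha^{\ast}$ uses a nontrivial Lyapunov function $W$ built from the Anderson--Leighton formula, shows that the trajectory is eventually trapped in a region where $W$ is monotone after a correction, deduces $\int (y')^{2}<\infty$, and then excludes oscillation by a growth-rate bootstrap. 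For the oscillatory conclusion near $q=1$ (i.e.\ $\alpha$ close to $\delta$), the paper compares the maximum of $W$ along the trajectory to $\mathcal{W}(M_{\ell})$ and uses the explicit dependence of the latter on $\delta-\alpha$; your proposed ``Bessel-type comparison'' would have to reproduce this quantitative information, which is not obvious.

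Your identification of the precise constants in (\ref{kil}) as the hardest step is somewhat misplaced: for $p_{1}<p<2$ the exclusion of the spurious $r^{-\eta}$ decay is a short concavity argument on $y=r^{\delta}w$ (if $y\to 0$ then the dominant linear part of the equation forces $y''<0$, a contradiction), not a matched super/subsolution construction.
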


In the sequel we study more generally the equation%
\begin{equation}
\left(  \left\vert w^{\prime}\right\vert ^{p-2}w^{\prime}\right)  ^{\prime
}+\frac{N-1}{r}\left\vert w^{\prime}\right\vert ^{p-2}w^{\prime}+rw^{\prime
}+\alpha w+\left\vert w\right\vert ^{q-1}w=0\qquad\text{in }\left(
0,\infty\right)  , \label{un}%
\end{equation}
where $\alpha>0$ is a parameter, and we only assume $q>1$. The problem without
source
\begin{equation}
u_{t}-div(\left\vert \nabla u\right\vert ^{p-2}\nabla u)=0 \label{san}%
\end{equation}
was treated in \cite{QW} when $p<2$ for positive solutions. In \cite{Bi1} we
make a complete description of the solutions of any sign of (\ref{san}) for
$p<2$, and study the equation
\begin{equation}
\left(  \left\vert w^{\prime}\right\vert ^{p-2}w^{\prime}\right)  ^{\prime
}+\frac{N-1}{r}\left\vert w^{\prime}\right\vert ^{p-2}w^{\prime}+rw^{\prime
}+\alpha w=0\qquad\text{in }\left(  0,\infty\right)  , \label{hog}%
\end{equation}
for arbitrary $\alpha\in\mathbb{R}$. A main point is that equation (\ref{un})
appears as a perturbation of (\ref{hog}) when $w$ is small enough. When
$\alpha>0$ and $(\delta-N)(\delta-\alpha)>0,$ observe that (\ref{hog}) has a
particular solution of the form $w(r)=\ell r^{-\delta},$ where
\begin{equation}
\ell=\left(  \delta^{p-1}\frac{\delta-N}{\delta-\alpha}\right)  ^{1/(2-p)}.
\label{ell}%
\end{equation}
A critical value of $\alpha$ appears in studying (\ref{hog}) when $p_{2}<p:$%
\begin{equation}
\alpha^{\ast}=\delta+\frac{\delta(N-\delta)}{(p-1)(2\delta-N)}, \label{eto}%
\end{equation}
In the case $p>2,$ eqution (\ref{hog}) is treated in \cite{GV} and
\cite{Bi2}.$\medskip$

Our paper is organized as follows:$\medskip$

In Section \ref{S2}, we give general properties about equation (\ref{un}).
Among the solutions defined on $\left(  0,\infty\right)  ,$ we show the
existence and uniqueness of global solutions $w=w(.,a)\in C^{2}\left(  \left(
0,\infty\right)  \right)  \cap C^{1}\left(  \left[  0,\infty\right)  \right)
$ of problem (\ref{un}) such that for some $a\in\mathbb{R}$
\begin{equation}
w(0)=a,\qquad w^{\prime}(0)=0. \label{ini}%
\end{equation}
By symmetry, we restrict to the case $a\geq0.$ We give the first informations
on the number of zeros of the solutions, and upper estimates near $\infty$ of
any solution of any sign.\medskip

In Section \ref{S3}, we study the case $(2-p)\alpha<p.$ We first show that any
solution $w$ satisfies $\lim_{r\rightarrow\infty}r^{\alpha}w=L\in\mathbb{R}.$
Moreover we prove that the function $a\longmapsto L(a)=\lim_{r\rightarrow
\infty}r^{\alpha}w(r,a)$ is continuous on $\mathbb{R}.$ When $L=0,$ then any
solution $w$ has a compact support if $p>2,$ and $r^{\delta}w$ is bounded if
$p<2$ and we give a complete description of behaviour of $w$ near infinity.
Then we study the existence of fast decaying solutions of equation \ref{un},
positive or changing sign, according to the value of $\alpha,$ see theorems
\ref{fast} and \ref{pn}. We give sufficient conditions on $p,q,\alpha,$ in
order that all the functions $w(.,a)$ are positive and slowly decaying, see
Theorem \ref{pos}; some of them are new, even in the case $p=2.$ Finally we
prove that all the solutions $w$ are oscillatory when $p_{1}<p<2$ and $\alpha$
is close to $\delta$, see Theorem \ref{osci}; this type of behaviour never
occurs in the case $p=2.$\medskip

In Section \ref{S4} we study the case $p\leq(2-p)\alpha,$ for which equation
(\ref{un}) has no more link with problem (\ref{lap}), but is interesting in
itself. Here $r^{\delta}w$ is bounded at $\infty$, except in the case
$p=(2-p)\alpha<p_{1}$ where a logarithm appears. Moreover if $p_{1}<p,$ or
$p_{1}=p<(2-p)\alpha,$ then all the solutions are oscillatory. As in section 3
we study the existence of positive solutions, see Theorems \ref{fast2} and
\ref{pol}. At Theorem \ref{hard} we prove a difficult result of convergence in
the range $\alpha<\eta$ where the solutions are nonoscillatory.\medskip

Section \ref{S5} is devoted to the proof of Theorem \ref{prin}, by taking
$\alpha=\alpha_{0}$ and applying the results of Section \ref{S3}, since
$(2-p)\alpha_{0}<p.$\medskip

\section{General properties\label{S2}}

\subsection{Equivalent formulations, and energy functions\label{S21}}

Equation (\ref{un}) can be written under equivalent forms,
\begin{equation}
\left(  r^{N-1}\left\vert w^{\prime}\right\vert ^{p-2}w^{\prime}\right)
^{\prime}+r^{N-1}(rw^{\prime}+\alpha w+\left\vert w\right\vert ^{q-1}%
w)=0\qquad\text{in }\left(  0,\infty\right)  , \label{var}%
\end{equation}

\begin{equation}
\left(  r^{N}(w+r^{-1}\left\vert w^{\prime}\right\vert ^{p-2}w^{\prime
})\right)  ^{\prime}+r^{N-1}\left(  (\alpha-N)w+\left\vert w\right\vert
^{q-1}w\right)  =0\qquad\text{in }\left(  0,\infty\right)  . \label{vir}%
\end{equation}
Defining
\begin{equation}
J_{N}(r)=r^{N}\left(  w+r^{-1}\left\vert w^{\prime}\right\vert ^{p-2}%
w^{\prime}\right)  , \label{gg}%
\end{equation}
then (\ref{vir}) is equivalent to
\begin{equation}
J_{N}^{\prime}(r)=r^{N-1}(N-\alpha-\left\vert w\right\vert ^{q-1})w.
\label{jpn}%
\end{equation}
We also use the function
\begin{equation}
J_{\alpha}(r)=r^{\alpha}\left(  w+r^{-1}\left\vert w^{\prime}\right\vert
^{p-2}w^{\prime}\right)  =r^{\alpha-N}J_{N}(r), \label{jk}%
\end{equation}
which satisfies
\begin{equation}
J_{\alpha}^{\prime}(r)=r^{\alpha-1}\left(  (\alpha-N)r^{-1}\left\vert
w^{\prime}\right\vert ^{p-2}w^{\prime}-\left\vert w\right\vert ^{q-1}w\right)
. \label{jpk}%
\end{equation}
The simplest energy function,
\begin{equation}
E(r)=\frac{1}{p^{\prime}}\left\vert w^{\prime}\right\vert ^{p}+\frac{\alpha
}{2}w^{2}+\frac{\left\vert w\right\vert ^{q+1}}{q+1}, \label{heu}%
\end{equation}
obtained by multiplying (\ref{un}) by $w^{\prime},$ is nonincreasing, since
\begin{equation}
E^{\prime}(r)=-(N-1)r^{-1}\left\vert w^{\prime}\right\vert ^{p}-rw^{\prime2},
\label{hep}%
\end{equation}
More generally we introduce a Pohozaev-Pucci-Serrin type function with
parameters $\lambda>0,\sigma,e\in\mathbb{R}:$%
\begin{equation}
V_{\lambda,\sigma,e}(r)=r^{\lambda}\left(  \frac{\left\vert w^{\prime
}\right\vert ^{p}}{p^{\prime}}+\frac{\left\vert w\right\vert ^{q+1}}%
{q+1}+e\frac{w^{2}}{2}+\sigma r^{-1}w\left\vert w^{\prime}\right\vert
^{p-2}w^{\prime}\right)  . \label{vla}%
\end{equation}
Such kind of functions have been used intensively in \cite{PS}. After
computation we find%
\begin{align}
r^{1-\lambda}V_{\lambda,\sigma,e}^{\prime}(r)  &  =-(N-1-\sigma-\frac{\lambda
}{p^{\prime}})\left\vert w^{\prime}\right\vert ^{p}-\left(  \sigma
-\frac{\lambda}{q+1}\right)  \left\vert w\right\vert ^{q+1}+\sigma
(\lambda-N)r^{-1}w\left\vert w^{\prime}\right\vert ^{p-2}w^{\prime}\nonumber\\
&  -\left(  rw^{\prime}+\frac{\sigma-e+\alpha}{2}w\right)  ^{2}-(\sigma
\alpha-\frac{e\lambda}{2}-\frac{(\sigma+\alpha-e)^{2}}{4})w^{2}. \label{dif}%
\end{align}
Notice that $E=V_{0,0,\alpha}.\medskip$

In all the sequel we use a logarithmic substitution; for given $d\in
\mathbb{R},$
\begin{equation}
w(r)=r^{-d}y_{d}(\tau),\qquad\tau=\ln r. \label{cge}%
\end{equation}
We get the equation, at each point $\tau$ such that $w^{\prime}(r)\neq0,$%
\[
y_{d}^{\prime\prime}+(\eta-2d)y_{d}^{\prime}-d(\eta-d)y_{d}\qquad\qquad
\qquad\qquad\qquad\qquad\qquad\qquad\qquad\qquad\qquad\qquad\qquad\qquad\qquad
\]%
\begin{equation}
+\frac{1}{p-1}e^{((p-2)d+p)\tau}\left\vert dy_{d}-y_{d}^{\prime}\right\vert
^{2-p}\left(  y_{d}^{\prime}-(d-\alpha)y_{d}+e^{-d(q-1)\tau}\left\vert
y_{d}\right\vert ^{q-1}y_{d}\right)  =0. \label{phis}%
\end{equation}
Setting
\begin{equation}
Y_{d}(\tau)=-r^{(d+1)(p-1)}\left\vert w^{\prime}\right\vert ^{p-2}w^{\prime},
\label{cgc}%
\end{equation}
we can write (\ref{phis}) as a system:%
\begin{equation}
\left\{
\begin{array}
[c]{c}%
y_{d}^{\prime}=dy_{d}-\left\vert Y_{d}\right\vert ^{(2-p)/(p-1)}Y_{d}%
,\qquad\qquad\qquad\qquad\qquad\qquad\qquad\qquad\qquad\qquad\qquad\\
Y_{d}^{\prime}=(p-1)(d-\eta)Y_{d}+e^{(p+(p-2)d)\tau}(\alpha y_{d}%
+e^{-\delta(q-1)\tau}\left\vert y_{d}\right\vert ^{q-1}y_{d}-\left\vert
Y_{d}\right\vert ^{(2-p)/(p-1)}Y_{d}),
\end{array}
\right.  \label{sysd}%
\end{equation}
In particular the case $d=\delta$ plays a great role: setting
\begin{equation}
w(r)=r^{-\delta}y(\tau),\quad Y(\tau)=-r^{(\delta+1)(p-1)}\left\vert
w^{\prime}\right\vert ^{p-2}w^{\prime},\qquad\tau=\ln r, \label{cha}%
\end{equation}
equation (\ref{phis}) takes the form%
\begin{equation}
(p-1)y^{\prime\prime}+(N-\delta p)y^{\prime}+(\delta-N)\delta y+\left\vert
\delta y-y^{\prime}\right\vert ^{2-p}\left(  y^{\prime}-(\delta-\alpha
)y+e^{-\delta(q-1)\tau}\left\vert y\right\vert ^{q-1}y\right)  =0. \label{yss}%
\end{equation}
and system (\ref{sysd}) becomes
\begin{equation}
\left\{
\begin{array}
[c]{c}%
y^{\prime}=\delta y-\left\vert Y\right\vert ^{(2-p)/(p-1)}Y\qquad\qquad
\qquad\qquad\qquad\qquad\qquad\\
Y^{\prime}=(\delta-N)Y-\left\vert Y\right\vert ^{(2-p)/(p-1)}Y+\alpha
y+e^{-\delta(q-1)\tau}\left\vert y\right\vert ^{q-1}y.
\end{array}
\right.  \label{sys}%
\end{equation}
As $\tau\rightarrow\infty,$ this system appears as a perturbation of an
\textit{autonomous} system%
\begin{equation}
\left\{
\begin{array}
[c]{c}%
y^{\prime}=\delta y-\left\vert Y\right\vert ^{(2-p)/(p-1)}Y\qquad\qquad
\qquad\\
Y^{\prime}=(\delta-N)Y-\left\vert Y\right\vert ^{(2-p)/(p-1)}Y+\alpha y
\end{array}
\right.  \label{aut}%
\end{equation}
corresponding to the problem (\ref{hog}). The existence of such a system is
one of the key points of the new results in \cite{Bi1}. If $\delta
(\delta-N)(\delta-\alpha)\leq0,$ it has only one stationnary point $(0,0).$ If
$\delta(\delta-N)(\delta-\alpha)>0,$ which implies $p<2,$ it has three
stationary points:
\begin{equation}
(0,0),\text{\quad}M_{\ell}=(\ell,(\delta\ell)^{p-1}),\text{ and\quad}M_{\ell
}^{\prime}=-M_{\ell}, \label{sta}%
\end{equation}
where $\ell$ is defined at (\ref{ell}). The critical value $\alpha^{\ast}$ of
$\alpha,$ defined at (\ref{eto}) corresponds to the case where the eigenvalues
of the linearized problem at $M_{\ell}$ are imaginary. Observe the relation
\begin{equation}
J_{N}(r)=e^{(N-\delta)\tau}(y(\tau)-Y(\tau)). \label{ret}%
\end{equation}
\medskip

As in \cite{Bi} and \cite{Bi1}, we construct a new energy function, adapted to
system (\ref{sys}), by using the Anderson and Leighton formula for autonomous
systems, see \cite{AnL}. Let%
\begin{equation}
\mathcal{W}(y,Y)=\frac{(2\delta-N)\delta^{p-1}}{p}\left\vert y\right\vert
^{p}+\frac{\left\vert Y\right\vert ^{p^{\prime}}}{p^{\prime}}-\delta
yY+\frac{\alpha-\delta}{2}y^{2}, \label{ww}%
\end{equation}%
\begin{equation}
W(\tau)=\mathcal{W}(y(\tau),Y(\tau))+\frac{1}{q+1}e^{-\delta(q-1)\tau
}\left\vert y(\tau)\right\vert ^{q+1} \label{wt}%
\end{equation}
Then
\begin{equation}
W^{\prime}(\tau)=\mathcal{U}(y(\tau),Y(\tau))-\frac{\delta(q-1)}%
{q+1}e^{-\delta(q-1)\tau}\left\vert y(\tau)\right\vert ^{q+1}, \label{wtp}%
\end{equation}
with%
\begin{equation}
\mathcal{U}(y,Y)=\left(  \delta y-\left\vert Y\right\vert ^{(2-p)/(p-1)}%
Y\right)  \left(  \left\vert \delta y\right\vert )^{p-2}\delta y-Y\right)
(2\delta-N-\mathcal{H}(y,Y)), \label{ut}%
\end{equation}%
\begin{equation}
\mathcal{H}(y,Y)=\left\{
\begin{array}
[c]{c}%
\left(  \delta y-\left\vert Y\right\vert ^{(2-p)/(p-1)}Y\right)  /\left(
\left\vert \delta y\right\vert ^{p-2}\delta y-Y\right)  \quad\quad
\text{if}\quad\left\vert \delta y\right\vert )^{p-2}\delta y\neq Y,\\
\left\vert \delta y\right\vert ^{2-p}/(p-1)\qquad\qquad\qquad\qquad
\qquad\qquad\text{if}\quad\left\vert \delta y\right\vert )^{p-2}\delta y=Y.
\end{array}
\right.  \label{hh}%
\end{equation}
If $2\delta\leq N,$ then $\mathcal{U}(y,Y)\leq0$ on $\mathbb{R}^{2},$ thus $W$
is nonincreasing. If $2\delta\geq N,$ the set
\begin{equation}
\mathcal{L}=\left\{  (y,Y)\in\mathbb{R}^{2}:\mathcal{H}(y,Y)=2\delta
-N\right\}  , \label{curl}%
\end{equation}
is a closed curve surrounding $(0,0),$ symmetric with respect to $(0,0),$ and
bounded, since for any $(y,Y)\in\mathbb{R}^{2},$
\begin{equation}
\mathcal{H}(y,Y)\geq\frac{1}{2}((\delta y)^{2-p}+\left\vert Y\right\vert
^{(2-p)/(p-1)}).\text{ } \label{curi}%
\end{equation}
Introducing the domain $\mathcal{S}$ of $\mathbb{R}^{2}$ with boundary
$\mathcal{L}$ and containing $(0,0),$
\begin{equation}
\mathcal{S}=\left\{  (y,Y)\in\mathbb{R}^{2}:\mathcal{H}(y,Y)<2\delta
-N\right\}  ,\text{ } \label{sos}%
\end{equation}
then $W^{\prime}(\tau)\leq0$ for any $\tau$ such that whenever $(y(\tau
),Y(\tau))\not \in \mathcal{S},$ from (\ref{wtp}).

\subsection{Existence of global solutions}

The first question concerning problem (\ref{un}), (\ref{ini}) is the local
existence and uniqueness near $0.$ It is not straightforward in the case
$p>2,$ and the regularity of the solution differs according to the value of
$p.$ It is shown in \cite{BG} when $p>2$ and $\alpha=\alpha_{0},$ by following
the arguments of \cite{GuV}. We recall and extend the proof to the general case.

\begin{theorem}
\label{exi}For any $a\neq0,$ problem (\ref{un}), (\ref{ini}) admits a unique
solution $w=w(.,a)\in C^{1}\left(  \left[  0,\infty\right)  \right)  ,$ such
that $\left\vert w^{\prime}\right\vert ^{p-2}w^{\prime}\in C^{1}\left(
\left[  0,\infty\right)  \right)  ;$ and
\begin{equation}
\lim_{r\rightarrow0}\left\vert w^{\prime}\right\vert ^{p-2}w^{\prime
}/rw=-(\alpha/N+a^{q+1}); \label{pri}%
\end{equation}
thus $w\in C^{2}\left(  \left[  0,\infty\right)  \right)  $ if $p<2.$ And
$\left\vert w(r)\right\vert \leq a$ on $\left[  0,\infty\right)  .$
\end{theorem}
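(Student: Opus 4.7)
The plan is to treat (\ref{un})--(\ref{ini}) as a singular initial-value problem at $r=0$, reformulate it as an integral equation, apply a fixed-point argument for local existence and uniqueness, and then globalize using the energy $E$ of (\ref{heu}). By symmetry I restrict to $a>0$. Writing (\ref{var}) as $(r^{N-1}|w'|^{p-2}w')'=-r^{N-1}(rw'+\alpha w+|w|^{q-1}w)$ and requiring the bracket to vanish at $0$, integration gives
\[
\phi(r):=|w'|^{p-2}w'(r)=-r^{1-N}\int_0^r s^{N-1}\bigl(sw'(s)+\alpha w(s)+|w(s)|^{q-1}w(s)\bigr)\,ds,
\]
and an integration by parts on $sw'$ recasts $\phi$ as a functional of $w$ alone. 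Recovering $w'$ through $w'=|\phi|^{(2-p)/(p-1)}\phi$ yields the fixed-point equation $(Tw)(r)=a+\int_0^r|\phi[w]|^{(2-p)/(p-1)}\phi[w](s)\,ds$.

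For local existence I work on the closed convex set $X=\{w\in C([0,r_0]):w(0)=a,\ a/2\le w\le 3a/2\}$: for $r_0$ small the uniform estimate $|\phi[w](s)|\le C s$ implies $|(Tw)'(r)|\le C' r^{1/(p-1)}$, so $T(X)\subset X$ and $T(X)$ is relatively compact by Arzelà--Ascoli. Schauder's theorem then furnishes a fixed point $w\in C^1([0,r_0])$, and the explicit integral formula shows $\phi\in C^1([0,r_0])$. Letting $r\to 0$ in $\phi(r)\sim -(r/N)(\alpha +a^{q-1})a$ reads off (\ref{pri}). When $p<2$ the exponent $(2-p)/(p-1)$ is positive, so $w'\in C^1$, and (\ref{un}) then gives $w\in C^2$.

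Uniqueness is the real difficulty, and the main obstacle of the proof. For $1<p\le 2$ the map $\phi\mapsto|\phi|^{(2-p)/(p-1)}\phi$ is Lipschitz on bounded sets, and a standard Grönwall estimate on the integral equation suffices. The hard part is $p>2$, where that map is only Hölder at $\phi=0$ and Picard fails. Following the strategy of Guedda--Véron, recalled in \cite{BG}, I compare two candidate solutions $w_1,w_2$ sharing the initial data: the key observation is that (\ref{pri}) provides a lower bound $-\phi_i(r)/r\ge c>0$ near $0$, which prevents either solution from having $w_i'=0$ on a right neighbourhood of $0$. On that neighbourhood the equation is non-degenerate; writing a linear differential inequality for $\phi_1-\phi_2$ based on the mean-value theorem for $|\cdot|^{(2-p)/(p-1)}$ away from zero, and integrating, forces $w_1\equiv w_2$ on $[0,\varepsilon]$. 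Standard ODE uniqueness in the regime $w'\ne 0$ then extends the equality.

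Global continuation and the a priori bound $|w(r)|\le a$ come from the energy $E$ of (\ref{heu}). From (\ref{hep}), $E'\le 0$, so
\[
\frac{\alpha}{2}w(r)^2+\frac{|w(r)|^{q+1}}{q+1}\le E(r)\le E(0)=\frac{\alpha}{2}a^2+\frac{a^{q+1}}{q+1}.
\]
Since $t\mapsto \alpha t^2/2+t^{q+1}/(q+1)$ is strictly increasing on $[0,\infty)$, this yields $|w|\le a$; then $w$ and $|w'|^{p-2}w'$ remain bounded, so the solution extends to $[0,\infty)$.
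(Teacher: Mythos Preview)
Your overall architecture---integral reformulation via (\ref{vir}), fixed point near $r=0$, energy $E$ for the global bound and for $|w|\le a$---is the paper's. Two points deserve comment.

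First, the paper uses Banach (contraction) rather than Schauder. This is not cosmetic: the contraction delivers local uniqueness in the same stroke, and the Lipschitz dependence on $a$ needed later (Remark~\ref{dep}) comes for free. Your Schauder argument gives existence only, forcing a separate uniqueness proof.

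Second, that separate uniqueness sketch for $p>2$ is where the substance lies, and as written it is too vague. The lower bound $-\phi_i(r)/r\ge c>0$ is indeed the key (in the paper's notation, $H(w)\ge \mu(a)r/2$ with $\mu(a)=(\alpha a+a^q)/N>0$). But ``a linear differential inequality for $\phi_1-\phi_2$'' does not name the actual estimate. Applying the mean-value inequality for $t\mapsto t^{1/(p-1)}$ on $[\,\mu(a)s/2,\,2\mu(a)s\,]$ one obtains
\[
\Bigl|\,|\phi_1|^{(2-p)/(p-1)}\phi_1-|\phi_2|^{(2-p)/(p-1)}\phi_2\,\Bigr|(s)\ \le\ C\, s^{(2-p)/(p-1)}\,|\phi_1-\phi_2|(s)\ \le\ C'\, s^{1/(p-1)}\,\|w_1-w_2\|_{C^0([0,\rho])},
\]
and the crucial point is that $s^{1/(p-1)}$ is integrable at $0$, producing a factor $\rho^{p'}$ that can be made $<1$. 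This is precisely the computation the paper carries out (citing \cite{GuV}), and it is what converts the merely H\"older nonlinearity into a contraction. Without writing it down your argument is incomplete.

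Finally, your global uniqueness has a gap: ``standard ODE uniqueness in the regime $w'\ne0$'' does not cover a later point $r_1>0$ where $w'(r_1)=0$, $w(r_1)\ne 0$ and $p>2$, since the equation degenerates there again. The paper handles this by rerunning the contraction argument centred at $r_1$; and if $w(r_1)=w'(r_1)=0$ then $E(r_1)=0$ and monotonicity of $E$ forces $w\equiv 0$ on $[r_1,\infty)$.
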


\begin{proof}
\textbf{Step }$1:$\textbf{ Local existence and uniqueness. }We can
suppose\textbf{ }$a>0.$ Let $\rho>0.$ From (\ref{vir}), any $w\in C^{1}\left(
\left[  0,\rho\right]  \right)  ,$ such that $\left\vert w^{\prime}\right\vert
^{p-2}w^{\prime}\in C^{1}\left(  \left[  0,\rho\right]  \right)  $ solution of
the problem satisfies $w=T(w),$ where
\begin{align}
T(w)(r)  &  =a-%
%TCIMACRO{\dint \nolimits_{0}^{r}}%
%BeginExpansion
{\displaystyle\int\nolimits_{0}^{r}}
%EndExpansion
\left\vert H(w)\right\vert ^{(2-p)/(p-1)}H(w)ds,\nonumber\\
H(w(r))  &  =rw-r^{1-N}J_{N}(r)=rw-r^{1-N}%
%TCIMACRO{\dint \nolimits_{0}^{r}}%
%BeginExpansion
{\displaystyle\int\nolimits_{0}^{r}}
%EndExpansion
s^{N-1}j(w(s))ds, \label{th}%
\end{align}
and $j(r)=(N-\alpha)r-\left\vert r\right\vert ^{q-1}r.$ Reciprocally, the
mapping $T$ is well defined from $C^{0}\left(  \left[  0,\rho\right]  \right)
$ into itself. If $w\in C^{0}\left(  \left[  0,\rho\right]  \right)  $ and
$w=T(w),$ then $w\in C^{1}\left(  \left(  0,\rho\right]  \right)  $ and
$\left\vert w^{\prime}\right\vert ^{p-2}w^{\prime}=H(w),$ hence $\left\vert
w^{\prime}\right\vert ^{p-2}w^{\prime}\in C^{1}\left(  \left(  0,\rho\right]
\right)  $ and $w$ satisfies (\ref{un}) in $\left(  0,\rho\right]  .$ Moreover
$\lim_{r\rightarrow0}j(w(r))=a^{q}-(N-\alpha)a,$ hence $\left\vert w^{\prime
}\right\vert ^{p-2}w^{\prime}(r)=-r((\alpha/N+a^{q-1})+o(1));$ in particular
$\lim_{r\rightarrow0}w^{\prime}(r)=0$, and $\left\vert w^{\prime}\right\vert
^{p-2}w^{\prime}\in C^{1}\left(  \left[  0,\rho\right]  \right)  ,$ and $w$
satisfies (\ref{un}) and (\ref{ini}), and (\ref{pri}) holds. We consider the
ball
\[
\mathcal{B}_{R,M}=\left\{  w\in C^{0}\left(  \left[  0,\rho\right]  \right)
:\left\Vert w-a\right\Vert _{C^{0}\left(  \left[  0,R\right]  \right)  }\leq
M\right\}  ,
\]
where $M$ is a parameter such that $0<M<a/2.$ Notice that $j$ is locally
Lipschitz continuous, since $q>1.$ In case $p<2,$ then the function
$r\mapsto\left\vert r\right\vert ^{(2-p)/(p-1)}r$ has the same property, hence
$T$ is a strict contraction from $\mathcal{B}_{\rho,M}$ into itself for $\rho$
and $M$ small enough. Now suppose $p>2.$ Let $K=K(a,M)$ be the best Lipschitz
constant of $j$ on $\left[  a-M,a+M\right]  .$ For any $w\in\mathcal{B}%
_{R,M},$ and any $r\in\left[  0,\rho\right]  ,$ from (\ref{th})
\begin{equation}
\left(  a-M-\frac{j(a)+MK_{M}}{N}\right)  r\leq H(w(r))\leq\left(
a+M+\frac{-j(a)+MK}{N}\right)  r \label{mm}%
\end{equation}
hence, setting $\mu(a)=a-j(a)/N=(a^{q}+\alpha a)/N>0,$
\[
\mu(a)r/2<H(w(r))<2\mu(a)r
\]
as soon as $M\leq M(a)$ small enough. Then from (\ref{th}),%
\[
\left\Vert T(w)-a\right\Vert _{C^{0}\left(  \left[  0,R\right]  \right)  }%
\leq\left(  2\mu(a)\right)  ^{1/(p-1)}R^{p/(p-1)}%
\]
hence $T(w)\in\mathcal{B}_{\rho,M}$ for $\rho=\rho(a)$ small enough. Now for
any $w_{1},w_{2}\in\mathcal{B}_{\rho,M},$ and any $r\in\left[  0,\rho\right]
,$
\[
\left\vert T(w_{1})(r)-T(w_{2})(r)\right\vert \leq%
%TCIMACRO{\dint \nolimits_{0}^{r}}%
%BeginExpansion
{\displaystyle\int\nolimits_{0}^{r}}
%EndExpansion
\left\vert \left\vert H(w)\right\vert ^{(2-p)/(p-1)}H(w_{1})-\left\vert
H(w_{2})\right\vert ^{(2-p)/(p-1)}H(w)\right\vert (s)ds
\]
and for any $s\in\left[  0,r\right]  ,$ from \cite[p.185]{GuV}, and
\begin{align}
&  \left\vert \left\vert H(w)\right\vert ^{(2-p)/(p-1)}H(w_{1})-\left\vert
H(w_{2})\right\vert ^{(2-p)/(p-1)}H(w)\right\vert (s)\nonumber\\
&  \leq H(w_{2})^{(2-p)/(p-1)}\left\vert H(w_{1})-H(w_{2})\right\vert
(s)\nonumber\\
&  \leq\left(  2\mu(a)\right)  ^{(2-p)/(p-1)}s^{1/(p-1)}\left(  \left\vert
w_{1}-w_{2}\right\vert +Ks^{-N}%
%TCIMACRO{\dint \nolimits_{0}^{s}}%
%BeginExpansion
{\displaystyle\int\nolimits_{0}^{s}}
%EndExpansion
\sigma^{N-1}\left\vert w_{1}-w_{2}\right\vert d\sigma\right) \nonumber\\
&  \leq C(a)s^{1/(p-1)}\left\Vert w_{1}-w_{2}\right\Vert _{C^{0}\left(
\left[  0,R\right]  \right)  } \label{bba}%
\end{align}
with $C(a)=\left(  2\mu(a)\right)  ^{(2-p)/(p-1)}\left(  1+K/N\right)  $
\[
\left\Vert T(w_{1})-T(w_{2})\right\Vert _{C^{0}\left(  \left[  0,R\right]
\right)  }\leq C(a)\rho^{p^{\prime}}\left\Vert w_{1}-w_{2}\right\Vert
_{C^{0}\left(  \left[  0,R\right]  \right)  }\leq\frac{1}{2}\left\Vert
w_{1}-w_{2}\right\Vert _{C^{0}\left(  \left[  0,R\right]  \right)  }%
\]
if $\rho(a)$ is small enough. Then $T$ is a strict contraction from
$\mathcal{B}_{\rho,M}$ into itself. Moreover if $\rho(a)$ and $M(a)$ are small
enough, then for any $b\in\left[  a/2,3a/2\right]  ,$
\[
\left\Vert w(.,b)-w(.,a)\right\Vert _{C^{0}\left(  \left[  0,\rho\right]
\right)  }\leq\left\vert b-a\right\vert +\frac{1}{2}\left\Vert
w(.,a)-w(.,b)\right\Vert _{C^{0}\left(  \left[  0,R\right]  \right)  }%
\]
that means $w(a,.)$ is Lipschitz dependent on $a$ in $\left[  0,\rho
(a)\right]  .$ The same happens for $w^{\prime}(.,a),$ as in (\ref{bba}),
since
\[
\left\vert w^{\prime}(.,b)-w^{\prime}(.,a)\right\vert =\left\vert \left\vert
H(w(.,b))\right\vert ^{(2-p)/(p-1)}H(w(.,b))-\left\vert H(w(.,a))\right\vert
^{(2-p)/(p-1)}H(w(.,a))\right\vert .
\]
\medskip

\noindent\ \textbf{Step }$2:$\textbf{ Global existence and uniqueness. }The
function $w$ on $\left[  0,\rho(a)\right]  $ can be extended on $\left[
0,\infty\right)  .$ Indeed on the definition set,%
\begin{equation}
E(r)=\frac{1}{p^{\prime}}\left\vert w^{\prime}\right\vert ^{p}+\frac{\alpha
}{2}w^{2}+\frac{1}{q+1}\left\vert w\right\vert ^{q+1}\leq E(0)=\frac{\alpha
}{2}a^{2}+a^{q+1}, \label{mn}%
\end{equation}
hence $w$ and $w^{\prime}$ stay bounded, and $\left\vert w(r)\right\vert \leq
a$ on $\left[  0,\infty\right)  $. The extended function is unique. Indeed
existence and uniqueness hold near at any point $r_{1}>0$ such that
$w^{\prime}(r_{1})\neq0$ or $p\leq2$ from the Cauchy-Lipschitz theorem; if
$w^{\prime}(r_{1})=0,w(r_{1})\neq0$ and $p>2,$ it follows from fixed point
theorem as above; finally if $w(r_{1})=w^{\prime}(r_{1})=0$, then $w\equiv0$
on $\left[  r_{1},\infty\right)  $ since $E$ is nonincreasing.
\end{proof}

\begin{remark}
\label{dep} For any $r_{1}\geq0,$ we have a local continuous dependence of $w$
and $w^{\prime}$ in function of $c_{1}=w(r_{1})$ and $c_{2}=$ $w^{\prime
}(r_{1}).$ Indeed the only delicate case is $c_{1}=c_{2}=0.$ Since $E$ is
nonincreasing, then for any $\varepsilon>0,$, if $\left\vert w(r_{1}%
)\right\vert +\left\vert w^{\prime}(r_{1})\right\vert \leq\varepsilon,$ then
$\sup_{\left[  r_{1},\infty\right)  }\left\vert w(r)\right\vert +\left\vert
w^{\prime}(r)\right\vert \leq C(\varepsilon),$ where $C$ is continuous; thus
the dependence holds on whole $\left[  r_{1},\infty\right)  $. In particular,
for any $a\in$ $\mathbb{R}$, $w(.,a)$ and $w^{\prime}(.,a)$ depend
continuously on $a$ on any segment $\left[  0,R\right]  .$ If for some
$a_{0},$ $w(.,a_{0})$ has a compact support, the dependance is continuous on
$\mathbb{R}.$ As a consequence, $w(.,.)$ and $w^{\prime}(.,.)\in C^{0}\left(
\left[  0,\infty\right)  \times\mathbb{R}\right)  .$
\end{remark}

\begin{remark}
\label{maxi}Any local solution $w$ of problem (\ref{un}) near a point
$r_{1}>0$ is defined on a maximal interval $(R_{w},\infty)$ with $0\leq
R_{w}<r_{1}.$
\end{remark}

\subsection{First oscillatory properties}

Let us begin by simple remarks on the behaviour of the solutions.

\begin{proposition}
\label{pro} Let $w$ be any solution of problem (\ref{un}). Then
\begin{equation}
\lim_{r\rightarrow\infty}w(r)=0,\qquad\lim_{r\rightarrow\infty}w^{\prime
}(r)=0. \label{wwp}%
\end{equation}
If $w>0$ for large $r$, then $w^{\prime}<0$ for large $r.$
\end{proposition}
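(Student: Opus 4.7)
The strategy is to exploit monotonicity of the basic energy $E$ from (\ref{heu}): since $\alpha>0$ all three terms of $E$ are nonnegative, while (\ref{hep}) gives $E'\leq 0$, so $E(r)\downarrow E_\infty\geq 0$. This already yields uniform bounds on $w$ and $w'$ plus
\[
\int_1^\infty s\,w'(s)^2\,ds<\infty.
\]

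The first (and hardest) task is to turn this integrability into the pointwise conclusion $w'(r)\to 0$. The main obstacle is that for $p\neq 2$ there is no direct pointwise bound on $w''$ at zeros of $w'$, so one cannot apply the usual ``integrability plus bounded derivative'' argument to $w'$ itself. I instead work with $v=|w'|^{p-2}w'\in C^1([0,\infty))$ (Theorem~\ref{exi}); equation (\ref{un}) gives the cheap bound $|v'(s)|\leq C(1+s)$ on $[1,\infty)$, with $C$ depending only on the sup bounds already proved. If $|v(r_n)|\geq\varepsilon$ along some $r_n\uparrow\infty$ with $r_{n+1}\geq r_n+1$, this Lipschitz-type bound forces $|v|\geq\varepsilon/2$ on disjoint intervals of half-width $\delta_n=\varepsilon/(8Cr_n)$, and then $\int s\,w'^2\,ds\geq\sum_n 2r_n\delta_n(\varepsilon/2)^{2/(p-1)}=+\infty$, contradicting the displayed integrability. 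Hence $v\to 0$, so $w'\to 0$.

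Given $w'\to 0$, $E\to E_\infty$ yields $f(|w(r)|)\to E_\infty$ with $f(s)=\tfrac{\alpha}{2}s^2+\tfrac{s^{q+1}}{q+1}$ strictly increasing, so $|w(r)|\to c:=f^{-1}(E_\infty)$. Continuity rules out sign changes once $|w|$ is close to $c>0$, so $w\to c$ or $w\to-c$; assume $w\to c>0$ for contradiction. Integrating (\ref{var}) from a large $r_0$ to $r$ and applying one integration by parts to $\int s^Nw'\,ds$ gives
\[
r^{N-1}v(r)=r_0^{N-1}v(r_0)-r^Nw(r)+r_0^Nw(r_0)+(N-\alpha)\!\int_{r_0}^r\!s^{N-1}w\,ds-\!\int_{r_0}^r\!s^{N-1}|w|^{q-1}w\,ds.
\]
The right hand side is equivalent to $-(\alpha c+c^q)r^N/N$ as $r\to\infty$, while the left hand side is $o(r^N)$ because $v\to 0$. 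This forces $\alpha c+c^q=0$, hence $c=0$, a contradiction. So $w(r)\to 0$.

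For the last claim, suppose $w(r)>0$ on $[R_0,\infty)$. Since $w(R_0)>0$ and $w\to 0$, some $r_1\geq R_0$ must satisfy $w'(r_1)<0$ (otherwise $w$ would be nondecreasing). If $w'$ ever returns to a nonnegative value past $r_1$, let $r^*=\inf\{r>r_1:w'(r)\geq 0\}<\infty$; then $v(r^*)=0$ by continuity, but (\ref{un}) at $r^*$ yields $v'(r^*)=-\alpha w(r^*)-w(r^*)^q<0$. Since $v'$ is continuous, $v$ is strictly decreasing on a neighborhood of $r^*$, forcing $v>0$ (hence $w'>0$) just to the left of $r^*$, which contradicts $w'<0$ on $[r_1,r^*)$. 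Therefore $w'(r)<0$ for all $r>r_1$.
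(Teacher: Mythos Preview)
Your proof is correct and takes a genuinely different route from the paper.

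The paper argues in one stroke: it introduces the Pohozaev--Pucci--Serrin function $V=V_{0,(N-1)/2,\alpha+(N-1)/2}$ from (\ref{vla}), observes that $V$ is bounded (since $w,w'$ are), and computes from (\ref{dif}) that $-rV'(r)\geq \tfrac{N-1}{2}E(r)+o(1)$; if $E_\infty>0$ this would force $V\to-\infty$, a contradiction. Thus $E_\infty=0$, which yields $w\to0$ and $w'\to0$ simultaneously. Your argument instead decouples the two conclusions: you first extract $\int_1^\infty s\,w'^{2}\,ds<\infty$ directly from (\ref{hep}), and then use the $C^1$ regularity of $v=|w'|^{p-2}w'$ and the crude bound $|v'|\leq C(1+r)$ to rule out $|v(r_n)|\geq\varepsilon$ along a sequence (the key being that the resulting ``plateaus'' of $|v|$ have width $\sim r_n^{-1}$, so each contributes a fixed positive amount to $\int s\,w'^{2}\,ds$). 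Only afterwards do you show $|w|\to c$ via $E\to E_\infty$, and force $c=0$ by integrating (\ref{var}) and comparing growth rates. This is more hands-on but avoids having to guess the right auxiliary function; it also works uniformly in $N\geq 1$, whereas the paper's displayed inequality carries the factor $(N-1)/2$ and would need a separate remark when $N=1$. For the final monotonicity statement both arguments are essentially the same: at any critical point with $w>0$ one has $v'=-(\alpha+w^{q-1})w<0$, so $v$ (hence $w'$) cannot return to zero once negative.
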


\begin{proof}
Let $w$ be any solution on $\left[  r_{0},\infty\right)  $, $r_{0}>0.$ Since
function $E$ is nonincreasing, $w$ and $w^{\prime}$ are bounded, and $E$ has a
finite limit $\xi\geq0.$ Consider the function $V=V_{\lambda,d,e}$ defined at
(\ref{vla}) with $\lambda=0,$ $\sigma=(N-1)/2,$ $e=\alpha+\sigma.$ It is
bounded near $\infty$ and satisfies%

\begin{align*}
-rV^{\prime}(r)  &  =\frac{N-1}{2}(\left\vert w^{\prime}\right\vert
^{p}+\left\vert w\right\vert ^{q+1}+\alpha w^{2}+\frac{N}{2}r^{-1}w\left\vert
w^{\prime}\right\vert ^{p-2}w^{\prime}+r^{2}w^{\prime^{2}})\\
&  \geq\frac{N-1}{2}E(r)+o(1)\geq\frac{N-1}{2}\xi+o(1).
\end{align*}
If $\xi>0,$ then $V$ is not integrable, which is contradictory. Thus $\xi=0$
and (\ref{wwp}) holds. Moreover at each extremal point $r$ such that $w(r)>0,$
from
\begin{equation}
(\left\vert w^{\prime}\right\vert ^{p-2}w^{\prime})^{\prime}(r)=-(\alpha
+w(r)^{q+1})w(r), \label{ext}%
\end{equation}
thus $r$ is unique and it is a maximum. If $w(r)>0$ for large $r,$ then from
(\ref{wwp}) necessarily $w^{\prime}<0$ for large $r$.\medskip
\end{proof}

Now we give some first results concerning the possible zeros of the solutions.
If $p<2$ then any solution $w\not \equiv 0$ of (\ref{un}) has only isolated
zeros, from the Cauchy-Lipschitz theorem. On the contrary if $p>2,$ it can
exist $r_{1}>0$ such that $w(r_{1})=w^{\prime}(r_{1})=0,$ and then from
uniqueness $w\equiv0$ on $\left[  r_{1},\infty\right)  .$

\begin{proposition}
\label{zer} (i) Assume $\alpha<N.$ Let $\underline{a}=(N-\alpha)^{1/(q-1)}.$
Then for any $a\in\left(  0,\underline{a}\right]  $, $w(r,a)>0$ on $\left[
0,\infty\right)  .$

\noindent(ii) Assume $p_{1}<p$ and $N\leq\alpha.$ Then for any $a\neq0,$
$w(r,a)$ has at least one isolated zero.

\noindent(iii) Assume $p<2.$ Then for any $0<m<M<\infty,$ any solution $w$ of
(\ref{un}) has a finite number of zeros in $\left[  m,M\right]  ,$ or
$w\equiv0$ in $\left[  m,M\right]  .$

\noindent(iv) Assume $p>2$ or $\alpha<\max(N,\eta).$ Then for any $m>0,$ any
solution $w$ of problem (\ref{un}) $w$ has a finite number of isolated zeros
in $\left[  m,\infty\right)  ,$ or $w\equiv0$ in $\left[  m,\infty\right)  $.
\end{proposition}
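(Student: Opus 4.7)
The unifying tool in all four parts is the first-order identity (\ref{jpn}) for $J_N$; for (iii)--(iv) we also invoke the substitution (\ref{cha}) and the autonomous limit system (\ref{aut}).

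\textbf{Parts (i) and (ii).} In (i), the a priori bound $|w| \le a \le \underline a$ from Theorem \ref{exi} gives $|w|^{q-1} \le N - \alpha$, hence by (\ref{jpn}) the function $J_N$ is non-decreasing on any interval where $w \ge 0$; combined with $J_N(0^+)=0$ this forces $J_N \ge 0$. If $w$ had a first zero at $r_0>0$ one would simultaneously have $J_N(r_0)\ge 0$ and $J_N(r_0)=r_0^{N-1}|w'(r_0)|^{p-2}w'(r_0)\le 0$ (because $w'(r_0)\le 0$), forcing $w(r_0)=w'(r_0)=0$ and then $w\equiv 0$ by the local uniqueness of Theorem \ref{exi}, contradicting $w(0)=a>0$. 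In (ii) the sign is reversed: $N\le\alpha$ makes $J_N$ non-increasing, and the assumption $w>0$ on $[0,\infty)$ gives $J_N\le 0$, equivalently $|w'|^{p-1}\ge rw$. For $p>2$, integrating this Bernoulli-type inequality makes $w^{(p-2)/(p-1)}$ negative in finite $r$, an immediate contradiction. For $p_1<p<2$, the same integration only yields $w\le C r^{-\delta}$; in the variables $(y,Y)$ of (\ref{cha}), the trajectory of the perturbed system (\ref{sys}) is then bounded and, by (\ref{ret}) together with $J_N\le 0$, confined to $\{0<y\le Y\}$. In this region the autonomous limit (\ref{aut}) has no equilibrium (at $M_\ell$ one computes $y-Y = \ell(\alpha-N)/(\delta-N)>0$ under $\alpha>N$ and $\delta>N$, while the origin is a source with real positive linearization eigenvalues $\delta,\delta-N$ since $p>p_1$), so Poincar\'e--Bendixson applied to the $\omega$-limit of the bounded trajectory produces the required contradiction.

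\textbf{Parts (iii) and (iv).} For (iii), infinitely many zeros in $[m,M]$ would accumulate at some $r^*\in[m,M]$; Rolle's theorem forces $w'(r^*)=0$ as well, and since $p<2$ the Cauchy--Lipschitz theorem applied to the first-order system with unknowns $w$ and $|w'|^{p-2}w'$ forces $w\equiv 0$ near $r^*$, hence on $[m,M]$. For (iv), part (iii) (for $p<2$) or the ``sticky zero'' property in Step 2 of the proof of Theorem \ref{exi} (for $p>2$) restricts possible accumulation of isolated zeros to infinity. When $p>2$ only transversal zeros $r_n$ are admissible; at each of them $E(r_n)=|w'(r_n)|^p/p'\to 0$ by Proposition \ref{pro}, and combining this with the integrability $\int^\infty r(w')^2\,dr<\infty$ furnished by (\ref{hep}) excludes an infinite sequence via a Sturm-type comparison against the asymptotic ODE. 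When $\alpha<\max(N,\eta)$ (and $p<2$), the hypothesis places the linearization of (\ref{aut}) at $M_\ell$ outside the focus regime, so the trajectory of the perturbed system (\ref{sys}) eventually stays in a fixed half-plane $\{y>0\}$ or $\{y<0\}$, giving only finitely many sign changes of $w$.

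The main obstacle is \textbf{part (iv)}. Both sub-cases require a careful passage from qualitative spectral information about the autonomous limit (\ref{aut}) to the genuinely non-autonomous system (\ref{sys}): one must show that the perturbation $e^{-\delta(q-1)\tau}|y|^{q-1}y$ in (\ref{sys}) does not itself generate new sign changes, and upgrade the ``non-focus'' character at $M_\ell$ to a true sign-definiteness of the trajectory for large $\tau$. The threshold $\alpha<\max(N,\eta)$ encodes precisely the range in which the discriminant of the linearization at $M_\ell$ stays non-negative in the relevant sub-range of $p$, and the boundary cases $\alpha=N$ and the opposite regime (corresponding to Theorem \ref{osci} below) will have to be handled by perturbation arguments.
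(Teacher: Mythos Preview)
Your arguments for (i) and (iii) are correct and essentially identical to the paper's.

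For (ii), the $p>2$ case is fine. For $p_1<p<2$, your phase--plane route is more elaborate than necessary and has gaps. At $\alpha=N$ your own formula gives $y-Y=\ell(\alpha-N)/(\delta-N)=0$ at $M_\ell$, so the equilibrium lies on the boundary of your region $\{0<y\le Y\}$ and is not excluded. More seriously, Poincar\'e--Bendixson does not apply directly to the non-autonomous system (\ref{sys}); you would need the asymptotic-autonomy machinery of \cite{LR} together with a separate exclusion of periodic orbits of (\ref{aut}) in the region. The paper's argument is purely one-dimensional. From $J_N\le 0$ one gets $-w'\ge (rw)^{1/(p-1)}$; integrating gives $w\le Cr^{-\delta}$. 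Since $J_N$ is \emph{strictly} decreasing, $J_N(r)\le -c<0$ for large $r$, hence $-w'\ge c'r^{(1-N)/(p-1)}$; integrating from $r$ to $\infty$ yields $w(r)\ge c''r^{-\eta}$. The two bounds together force $\delta\le\eta$, which contradicts $p>p_1$ (equivalently $N<\delta$, equivalently $\eta<\delta$). No phase plane is needed.

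For (iv) you correctly identify this as the crux, but the proposed mechanism is wrong. The condition $\alpha<\max(N,\eta)$ has nothing to do with the discriminant of the linearization at $M_\ell$; that threshold is the $\alpha^\ast$ of (\ref{eto}), which is a different quantity. And since you never establish that the trajectory approaches $M_\ell$, spectral information there is of no use. The paper's proof is elementary and avoids the phase plane entirely. One uses the substitution (\ref{cge}) with a \emph{free} exponent $d>0$: between any two consecutive zeros of $w$, at a maximal point $\rho$ of $|y_d|$, equation (\ref{phis}) yields
\[
\rho^{p}\,|w(\rho)|^{2-p}\bigl(d-\alpha-|w(\rho)|^{q-1}\bigr)\le (p-1)d^{p-1}(d-\eta).
\]
If $p>2$, choose any $d>\alpha$; since $|w(\rho)|^{2-p}\to\infty$, the left-hand side blows up, so $\rho$ stays bounded. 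If $p<2$ and $\alpha<\eta$, choose $d\in(\alpha,\eta)$: the right-hand side is negative while the left-hand side is eventually nonnegative, again a contradiction for large $\rho$. If $\alpha<N$, one instead integrates $J_N'$ between two consecutive zeros via (\ref{jpn}); for large $\rho$ the integrand has a fixed sign, contradicting the sign pattern of $|w'|^{p-2}w'$ at the endpoints. In all three sub-cases the set of isolated zeros is bounded; finiteness then follows from (iii) when $p<2$, and from the same maximum-point inequality (applied to a sequence of maxima accumulating at a finite double zero) when $p>2$. No Sturm comparison, no energy-at-zeros argument, and no linearization at $M_\ell$ is involved.
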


\begin{proof}
(i) Let $a\in\left(  0,\underline{a}\right]  .$ Assume that there exists a
first $r_{1}>0$ such that $w(r_{1},a)=0,$ hence $w^{\prime}(r_{1},a)\leq0.$
Let us consider $J_{N}$ defined by (\ref{gg}).\ Then $J_{N}^{\prime}(r)\geq0$
on $\left[  0,r_{1}\right)  $, since $0\leq w(r)\leq a,$ and $J_{N}(0)=0,$ and
$J_{N}(r_{1})=r_{1}^{N-1}\left\vert w^{\prime}(r_{1})\right\vert
^{p-2}w^{\prime}(r_{1})\leq0,$ thus $J_{N}^{\prime}\equiv0$ on $\left[
0,r_{1}\right]  $, thus $w\equiv\underline{a}$, which contradicts
(\ref{un}).\medskip

\noindent(ii) Suppose that for some $a>0,$ $w(r)=w(r,a)>0$ on $\left[
0,\infty\right)  .$ Since $N\leq\alpha,$ there holds $J_{N}^{\prime}(r)<0$ on
$\left[  0,\infty\right)  ,$ and $J_{N}(0)=0,$ hence $J_{N}(r)\leq0.$ Then
$r\longmapsto r^{p^{\prime}}-\delta w^{-\delta}$ is nonincreasing.

$\bullet$ If $p>2,$ it is impossible, thus $w$ has a first zero $r_{1},$ and
$J_{N}^{\prime}(r)<0$ on $\left[  0,r_{1}\right)  ,$ thus $J_{N}(r_{1})<0,$
then $w^{\prime}(r_{1})<0$ and $r_{1}$ is isolated.

$\bullet$ If $p<2,$ there exists $c>0$ such that for large $r,$ $J_{N}%
(r)\leq-c,$ hence $w(r)+cr^{-N}\leq\left\vert w^{\prime}(r)\right\vert
^{p-1}/r.$ Then there exists another $c>0$ such that $w^{\prime}%
+cr^{(1-N)/(p-1)}\leq0.$ If $N=1$ it contradicts Proposition \ref{pro}. If
$2\leq N,$ then $p<N,$ and $w-cr^{-\eta}/\eta$ decreases to $0$, thus
$\delta\leq\eta,$ which contradicts $N<\delta$, which means $p_{1}<p,$ from
(\ref{dn}).\medskip

\noindent(iii) Suppose that $w$ has an infinity of isolated zeros in $\left[
m,M\right]  .$ Then there exists a sequence of zeros converging to some
$\overline{r}\in\left[  m,M\right]  .$ We can extract an increasing (or a
decreasing) subsequence of zeros $\left(  r_{n}\right)  $ such that $w>0$ on
$\left(  r_{2n},r_{2n+1}\right)  $ and $w<0$ on $\left(  r_{2n-1}%
,r_{2n}\right)  .$ There exists $s_{n}\in\left(  r_{n},r_{n+1}\right)  $ such
that $w^{\prime}(s_{n})=0;$ since $w\in C^{1}\left[  0,\infty\right)  ,$ it
implies $w(\overline{r})=w^{\prime}(\overline{r})=0.$ It is impossible because
$p<2.$\medskip

\noindent(iv) Suppose that $w\not \equiv 0$ in $\left[  m,\infty\right)  $.
Let $Z$ be the set of its isolated zeros in $\left[  m,\infty\right)  $.
Notice that $m$ is not an accumulation point of $Z,$ since $(w(m),w^{\prime
}(m))\neq(0,0).$ Let $\rho_{1}<$ $\rho_{2},$ be two consecutive zeros, thus
such that $\rho_{1}$ is isolated, and $\left\vert w\right\vert >0$ on $\left(
\rho_{1},\rho_{2}\right)  .$ We make the substitution (\ref{cge}), where $d>0$
will be choosen after. At each point $\tau$ such that $y_{d}^{\prime}%
(\tau)=0,$ and $y_{d}(\tau)\neq0,$ we deduce
\begin{equation}
(p-1)y_{d}^{\prime\prime}=y_{d}\left(  (p-1)d(\eta-d)+e^{((p-2)d+p)\tau
}\left\vert dy_{d}\right\vert ^{2-p}\left(  d-\alpha-e^{-d(q-1)\tau}\left\vert
y_{d}\right\vert ^{q-1}y_{d}\right)  \right)  ; \label{teg}%
\end{equation}
if $\tau\in\left(  e^{\rho_{1}},e^{\rho_{2}}\right)  $ is an maximal point of
$\left\vert y_{d}\right\vert $, it follows that%
\begin{equation}
e^{((p-2)d+p)\tau}\left\vert dy_{d}(\tau)\right\vert ^{2-p}\left(
d-\alpha-e^{-d(q-1)\tau}\left\vert y_{d}(\tau)\right\vert ^{q-1}\right)
\leq(p-1)d(d-\eta) \label{tig}%
\end{equation}
Setting $\rho=e^{\tau}\in\left(  \rho_{1},\rho_{2}\right)  ,$ it means
\begin{equation}
\rho^{p}\left\vert w(\rho)\right\vert ^{2-p}\left(  d-\alpha-\left\vert
w\right\vert ^{q-1}(\rho)\right)  \leq(p-1)d^{p-1}(d-\eta). \label{tog}%
\end{equation}
If $p>2,$ we fix $d>\alpha.$ Since $\lim_{r\rightarrow\infty}w(r)=0,$ the
coefficient of $\rho^{p}$ in the left-hand side tends to $\infty$ as
$\rho\rightarrow\infty,$ hence $\rho$ is bounded, hence also $\rho_{1},$ thus
$Z$ is bounded. If $\alpha<\eta,$ we take $d\in\left(  \alpha,\eta\right)  .$
Then the right hand side is negative, and the left hand side is nonnegative
for large $r,$ hence again $Z$ is bounded. If\textbf{ } $\alpha<N,$ we use
function $J_{N}:$
\begin{equation}
J_{N}(\rho_{2})-J_{N}(\rho_{1})=\rho_{2}^{N-1}\left\vert w^{\prime}\right\vert
^{p-2}w^{\prime}(\rho_{2})-\rho_{1}^{N-1}\left\vert w^{\prime}\right\vert
^{p-2}w^{\prime}(\rho_{1})=%
%TCIMACRO{\dint \nolimits_{\rho_{1}}^{\rho_{2}}}%
%BeginExpansion
{\displaystyle\int\nolimits_{\rho_{1}}^{\rho_{2}}}
%EndExpansion
s^{N-1}w(N-\alpha-\left\vert w\right\vert ^{q-1}w)ds \label{tug}%
\end{equation}
and the integral has the sign of $w$ for large $\rho,$ hence a contradiction.
In any case $Z$ is bounded. Suppose that $Z$ is infinite; then $p>2$ from step
(iii), and there exists a sequence of zeros $\left(  r_{n}\right)  $,
converging to some $\overline{r}\in\left(  m,\infty\right)  $ such that
$w(\overline{r})=w^{\prime}(\overline{r})=0$. Then there exists a sequence
$\left(  \tau_{n}\right)  $ of maximal points of $\left\vert y_{d}\right\vert
$ converging to $\overline{\tau}=\ln\overline{r}$. Taking $\rho=\rho
_{n}=e^{\tau_{n}}$ in (\ref{tog}) leads to a contradiction, since the
left-hand side tends to $\infty.$\medskip
\end{proof}

When $w$ has a constant sign for large $r,$ we can give some informations on
the behaviour for large $\tau$ of the solutions $(y,Y)$ of system (\ref{sys}),
in particular the convergence to a stationary point of the autonomous system
(\ref{aut}): We have also a majorization in one case when the solution is
changing sign.

\begin{lemma}
\label{com} Let $w$ be any solution of (\ref{un}), and $(y,Y)$ be defined by
(\ref{cha}).

(i) If $y>0$ and $y$ is not monotone for large $\tau$, then $Y$ is not
monotone for large $\tau,$ and either $\max(\alpha,N)<\delta$ and $\lim
_{\tau\rightarrow\infty}y(\tau)=\ell,$ or $\delta<\min(\alpha,N)$ and
$\lim\inf_{\tau\rightarrow\infty}y(\tau)\leq\ell\leq\lim\sup_{\tau
\rightarrow\infty}y(\tau).$

(ii) If $y$ $>0$ and $y$ has a limit $l$ at $\infty,$ then either $l=0$ and
$\lim_{\tau\rightarrow\infty}Y(\tau)=0,$ or $(\delta-N)(\delta-\alpha)>0$ and
$l=\ell$ and $\lim_{\tau\rightarrow\infty}(y(\tau),Y(\tau))=M_{\ell},$ or
$\delta=\alpha=N$ and $\lim_{\tau\rightarrow\infty}Y(\tau)=(\delta l)^{p-1}.$

(iii) If $y>0$ and $y$ is nondecreasing for large $\tau$ and $\lim
_{\tau\rightarrow\infty}y(\tau)=\infty,$ then $\lim_{\tau\rightarrow\infty
}Y(\tau)=\infty.$

(iv) If $y$ is changing sign for large $\tau$ (which implies $p<2$) and
$\alpha<\delta,$ then $N<\delta$ and $\left\vert y(\tau)\right\vert \leq
\ell\left(  1+o(1)\right)  $ and $\left\vert Y(\tau)\right\vert \leq
(\delta\ell)^{p-1}(1+o(1))$ near $\infty.$
\end{lemma}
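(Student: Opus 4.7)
The organising device is the relation read off from the first equation of~(\ref{sys}) at any critical point of $y$: if $y'(\tau_0)=0$ and $y(\tau_0)\neq 0$, then $|Y(\tau_0)|^{(2-p)/(p-1)}Y(\tau_0)=\delta y(\tau_0)$, equivalently $Y(\tau_0)=|\delta y(\tau_0)|^{p-2}\delta y(\tau_0)$. Differentiating the first equation yields $y''=\delta y'-(p-1)^{-1}|Y|^{(2-p)/(p-1)}Y'$, so at an extremum of $y$ we have $\mathrm{sign}(y'')=-\mathrm{sign}(Y')$ whenever $Y\neq 0$. Plugging the extremum value of $Y$ into the second equation of~(\ref{sys}) gives
\[
Y'(\tau_0)=(\delta-N)|\delta y|^{p-2}\delta y+(\alpha-\delta)y+e^{-\delta(q-1)\tau_0}|y|^{q-1}y,
\]
in which the last term is $o(y)$ as $\tau_0\to\infty$, since $y$ is bounded by Theorem~\ref{exi}.

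For parts~(i) and~(iv), $y$ has infinitely many extrema accumulating at $\infty$. Imposing $Y'\ge 0$ at a maximum and $Y'\le 0$ at a minimum, then dividing by $y$, yields
\[
(\delta-N)\delta^{p-1}y_M^{p-2}\ge(\delta-\alpha)+o(1),\qquad (\delta-N)\delta^{p-1}y_m^{p-2}\le(\delta-\alpha)+o(1),
\]
with analogous signed versions when $y<0$. A case analysis on the signs of $\delta-N$ and $\delta-\alpha$ then delivers~(i): the cases $\alpha<\delta<N$ and $N<\delta<\alpha$ are incompatible with the coexistence of maxima and minima; $\max(\alpha,N)<\delta$ forces $y_M\le\ell(1+o(1))\le y_m$, whence $y\to\ell$; and $\delta<\min(\alpha,N)$ gives the reversed bounds, whence $\liminf y\le\ell\le\limsup y$. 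Non-monotonicity of $Y$ is immediate because $Y=|\delta y|^{p-2}\delta y$ at the alternating extrema of $y$ with distinct values. For~(iv), the hypothesis $\alpha<\delta$ makes $\delta-\alpha>0$ strictly positive, so the maximum inequality forces $\delta-N>0$ (i.e.\ $N<\delta$) and $|y|\le\ell(1+o(1))$ at each extremum of $|y|$; since $y$ is monotone between two consecutive extrema, the bound $|y(\tau)|\le\ell(1+o(1))$ propagates to all $\tau$, and then $|Y|\le(\delta\ell)^{p-1}(1+o(1))$ at each extremum via the extremum relation.

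For~(ii), the hypothesis $y\to l$ combined with the mean value theorem produces $\tau_n\to\infty$ with $y'(\tau_n)\to 0$, so $Y(\tau_n)\to(\delta l)^{p-1}$. If $l=0$, this gives $Y(\tau_n)\to 0$, and the full limit $Y\to 0$ follows from $w,\,w'\to 0$ of Proposition~\ref{pro} rewritten in $(y,Y)$-variables via~(\ref{cha}). If $l>0$, the limit set of $(y,Y)$ lies in $\{y=l\}\times\mathbb{R}$ and is invariant under the asymptotically autonomous flow~(\ref{sys}); the only points with first coordinate $l$ that are fixed by the autonomous system~(\ref{aut}) are the stationary points $M_\ell=(\ell,(\delta\ell)^{p-1})$ when $(\delta-N)(\delta-\alpha)>0$, together with the degenerate line $\delta=\alpha=N$. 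For~(iii), if $Y$ were bounded along a sequence $\tau_n$ with $y(\tau_n)\to\infty$, the second equation of~(\ref{sys}) gives $Y'(\tau_n)\ge\alpha y(\tau_n)/2\to\infty$; a short bootstrap exploiting the monotonicity of $y$ pushes $Y$ past any threshold and keeps it there, giving $Y\to\infty$.

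The delicate step is upgrading the extremum/subsequential information into \emph{full} limits in~(ii)--(iii) and into a pointwise bound on $|Y|$ in~(iv). For this I would use the Anderson--Leighton energy $W$ of~(\ref{wt}): when $2\delta\le N$, identity~(\ref{wtp}) gives $W'\le 0$ up to an integrable $e^{-\delta(q-1)\tau}$ correction, trapping $(y,Y)$ in a bounded sublevel set; when $2\delta>N$, $W$ decreases outside the closed bounded curve $\mathcal{L}$ of~(\ref{curl}), again confining the trajectory to a neighbourhood of the stationary points of~(\ref{aut}). This is what links the extremum analysis to the convergence statements and yields the $|Y|$ bound between extrema.
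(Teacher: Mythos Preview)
Your extremum analysis for part~(i) and the $|y|$ bound in~(iv) is essentially the paper's argument, just phrased via the system~(\ref{sys}) rather than via equation~(\ref{yss}). One small correction: the justification ``$y$ is bounded by Theorem~\ref{exi}'' is wrong, since that theorem bounds $w$, not $y=r^{\delta}w$. The correct reason that $e^{-\delta(q-1)\tau}|y|^{q-1}y=o(y)$ is that $e^{-\delta\tau}y=w\to 0$ by Proposition~\ref{pro}.

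The genuine gaps are in~(ii), (iii), and the $|Y|$ bound of~(iv). In~(ii) with $l=0$ you claim $Y\to 0$ follows from $w,w'\to 0$ via~(\ref{cha}); but $|Y|=r^{(\delta+1)(p-1)}|w'|^{p-1}$, and the growing prefactor means $w'\to 0$ does not give $Y\to 0$. In~(ii) with $l>0$ your limit-set argument via asymptotic autonomy presupposes that the trajectory $(y,Y)$ is bounded, which you have not established (and which is precisely what is at stake). In~(iii) the ``short bootstrap'' is too vague: $Y'(\tau_n)\to\infty$ along a sequence does not by itself prevent $Y$ from dropping back down in between. In~(iv) you correctly get the extremum bound on $|y|$, but the $|Y|$ bound cannot be read off at extrema of $y$; you propose the energy $W$, which is heavier than needed.

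The paper handles all three of these uniformly and elementarily by the same device you used for $y$, but applied to $Y$: split into $Y$ monotone versus $Y$ non-monotone. If $Y$ is monotone it has a limit in $[0,\infty]$; the value $\infty$ is ruled out because then $y'=\delta y-|Y|^{(2-p)/(p-1)}Y\to-\infty$, contradicting $y\to l$ (or $y$ nondecreasing). If $Y$ is not monotone, evaluate the second equation of~(\ref{sys}) at extremal points of $Y$ (where $Y'=0$), obtaining $|Y|^{(2-p)/(p-1)}Y-(\delta-N)Y=\alpha y+o(1)$; this pins down $Y$ at those points and hence its limit. For~(iv) the same evaluation at maximal points of $|Y|$ gives $|Y|^{(2-p)/(p-1)}(\delta-\alpha+o(1))\le(\delta-N)\delta$ directly, with no recourse to $W$. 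In short, your plan is right for $y$ but you should mirror it for $Y$ rather than invoking Proposition~\ref{pro}, limit-set invariance, or the Anderson--Leighton energy.
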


\begin{proof}
From Proposition \ref{pro}, $Y(\tau)>0$ for large $\tau$ in cases (i) to
(iii).\medskip

\noindent(i) Suppose that $y$ is not monotone near $\infty.$ Then there exists
an increasing sequence $\left(  \tau_{n}\right)  $ such that $\tau
_{n}\rightarrow\infty,$ $y^{\prime}(\tau_{n})=0,$ $y^{\prime\prime}(\tau
_{2n})\geq0,$ $y^{\prime\prime}(\tau_{2n+1})\leq0,$ $y(\tau_{2n})\leq
y(\tau)\leq y(\tau_{2n+1})$ on $\left(  \tau_{2n},\tau_{2n+1}\right)  ,$
$y(\tau_{2n})\leq y(\tau)\leq y(\tau_{2n-1})$ on $\left(  \tau_{2n-1}%
,\tau_{2n}\right)  ,$ and $y(\tau_{2n})<y(\tau_{2n+1}).$From (\ref{yss}),%
\begin{equation}
(p-1)y^{\prime\prime}(\tau_{n})=\delta^{2-p}y(\tau_{n})\left(  y(\tau
_{n})^{2-p}\left(  \delta-\alpha-e^{-\delta(q-1)\tau_{n}}y(\tau_{n}%
)^{q-1})\right)  -(\delta-N)\delta^{p-1}\right)  \label{pow}%
\end{equation}
From Proposition \ref{pro}, $e^{-\delta\tau}y(\tau)=o(1)$ near $\infty$ and
\begin{align*}
&  y(\tau_{2n+1})^{2-p}\left(  \alpha-\delta+e^{-\delta(q-1)\tau_{2n+1}}%
y(\tau_{2n+1})^{q-1})\right) \\
&  >(N-\delta)\delta^{p-1}\geq y(\tau_{2n})^{2-p}\left(  \alpha-\delta
+e^{-\delta(q-1)\tau_{2n}}y(\tau_{2n})^{q-1}\right)  >y(\tau_{2n}%
)^{2-p}\left(  \alpha-\delta\right)  .
\end{align*}
Then either $\alpha<\delta$ and $N<\delta$ and $\ell\leq y(\tau_{2n})\leq
y(\tau_{2n+1})\leq\ell(1+o(1)),$ hence $\lim_{\tau\rightarrow\infty}%
y(\tau)=\ell.$ Or $\delta<\alpha$ and $\delta<N,$ and $y(\tau_{2n})<\ell,$ and
$\ell\leq y(\tau_{2n+1})(1+o(1))$. If $Y$ is monotone near $\infty,$ then from
(\ref{sys}), $y^{\prime\prime}=\delta y^{\prime}-Y^{(2-p)/(p-1)}Y^{\prime},$
hence $e^{-\delta t}y^{\prime}$ is monotone, which contradicts the existence
of a sequence $\left(  \tau_{n}\right)  $ as above. Thus $Y$ is not
monotone.\medskip

\noindent(ii) Let $l=\lim_{\tau\rightarrow\infty}y\geq0.$ If $Y$ is monotone,
either $\lim_{\tau\rightarrow\infty}Y=\infty,$ which is impossible, since then
$y^{\prime}\rightarrow-\infty;$ or $Y$ has a finite limit $\lambda\geq0.$ If
$Y$ is not monotone, at the extremal points $\tau$ of $Y,$ we have
\[
\left\vert Y(\tau)\right\vert ^{(2-p)/(p-1)}Y(\tau)-(\delta-N)Y(\tau)=\alpha
l+o(1),
\]
from (\ref{sys}), thus $Y$ has a limit at these points, hence $Y$ still has a
limit $\lambda$. From (\ref{sys}), $y^{\prime}$ has a limit, necessarily $0,$
hence $\lambda=(\delta l)^{p-1}.$Then $Y^{\prime}$ has a limit, necessarily
$0,$ and $(\delta-N)(\delta l)^{p-1}=(\delta-\alpha)l;$ thus $l=0=\lambda,$ or
$(\delta-N)(\delta-\alpha)>0$ and $l=\ell,$ $\lambda=(\delta\ell)^{p-1},$ or
$\delta=\alpha=N.$\medskip

\noindent(iii) Suppose that $y$ is nondecreasing and $\lim_{\tau
\rightarrow\infty}y(\tau)=\infty.$ Then either $Y$ is not monotone, and at
minimum points it tends to $\infty$ from (\ref{sys}), then $\lim
_{\tau\rightarrow\infty}Y(\tau)=\infty$. 0r $Y$ is monotone; if it has a
finite limit, then $\lim_{\tau\rightarrow\infty}Y^{\prime}(\tau)=\infty$ from
(\ref{sys}), which is impossible. Then again $\lim_{\tau\rightarrow\infty
}Y(\tau)=\infty.$\medskip

\noindent(iv) Assume that $y$ does not keep a constant sign near $\infty;$
then also $w,$ thus also $w^{\prime},$ and in turn $Y.$ At any maximal point
$\theta$ of $\left\vert y\right\vert $, one finds
\[
(p-1)y^{\prime\prime}(\theta)=\delta^{2-p}y(\theta)\left(  \left\vert
y(\theta)\right\vert ^{2-p}\left(  \delta-\alpha-e^{-\delta(q-1)\theta
}\left\vert y(\theta)\right\vert ^{q-1}\right)  -(\delta-N)\delta
^{p-1}\right)  ,
\]
hence
\[
\left\vert y(\theta)\right\vert ^{2-p}\left(  \delta-\alpha+o(1)\right)
\leq(\delta-N)\delta^{p-1}.
\]
Since $\delta-\alpha>0,$ it follows that $\delta-N>0$ and $\left\vert
y(\tau)\right\vert \leq\ell(1+o(1))$ near $\infty$. Similarly at any maximal
point $\vartheta$ of $\left\vert Y\right\vert ,$ one finds
\begin{align*}
Y^{\prime\prime}(\vartheta)  &  =(\alpha+e^{-\delta(q-1)\vartheta}\left\vert
y(\vartheta)\right\vert ^{q-1})y^{\prime}+\delta(q-1)e^{-\delta(q-1)\vartheta
}\left\vert y(\vartheta)\right\vert ^{q-1}y\\
0  &  =(\delta-N)Y(\vartheta)-\left\vert Y(\vartheta)\right\vert
^{(2-p)/(p-1)}Y(\vartheta)+(\alpha+e^{-\delta(q-1)\vartheta}\left\vert
y(\vartheta)\right\vert ^{q-1})y(\vartheta)
\end{align*}
which implies
\[
\left\vert Y(\vartheta)\right\vert ^{(2-p)/(p-1)}\left(  \delta-\alpha
+o(1)\right)  \leq(\delta-N)\delta
\]
thus $\left\vert Y(\tau)\right\vert \leq(\delta\ell)^{p-1}(1+o(1))$ near
$\infty.$
\end{proof}

\subsection{Further results by blow up techniques}

Next we give two results obtained by rescaling and blow up techniques. The
first one consists in a scaling leading to the equation
\begin{equation}
r^{1-N}\left(  r^{N-1}\left\vert v^{\prime}\right\vert ^{p-2}v^{\prime
}\right)  ^{\prime}+\left\vert v\right\vert ^{q-1}v=0. \label{vq}%
\end{equation}
without term in $rw^{\prime},$ extending the result of (\cite[Proposition
3.4]{W1}) to the case $p\neq2.$ It gives a result in the subcritical case
$q<q^{\ast},$ and does not depend on the value of $\alpha.$

\begin{proposition}
\label{sig} Assume that $1<q<q^{\ast}($thus $p>p_{2})$. Then for any
$m\in\mathbb{N},$ there exists $\overline{a_{m}}$ such that for any
$a>\overline{a_{m}},$ $w(.,a)$ admits at least $m+1$ isolated zeros. And for
fixed $m,$ the $m^{th}$ zero of $w(.,a)$ tends to $0$ as $a$ tends to
$\infty.$
\end{proposition}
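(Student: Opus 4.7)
The plan is a blow-up rescaling that reduces (\ref{un}) to the pure $p$-Laplace Emden-Fowler equation (\ref{vq}) in the limit $a\to\infty$, after which I invoke the classical fact that (\ref{vq}) has an oscillatory radial solution starting from $(1,0)$ when $q<q^{\ast}$, and then transfer the zeros back to $w(\cdot,a)$ by continuous dependence. The zeros of the rescaled function are bounded, while the scaling factor blows up, so the zeros of $w(\cdot,a)$ automatically tend to $0$.

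Concretely, with $\lambda_a := a^{(q+1-p)/p}$ (which tends to $+\infty$ as $a\to\infty$ in the relevant range $q>p-1$), I set $v_a(s):=a^{-1}w(\lambda_a^{-1}s,a)$. Then $v_a(0)=1$, $v_a'(0)=0$, and a direct substitution turns (\ref{un}) into
\[
(|v_a'|^{p-2}v_a')'+\frac{N-1}{s}|v_a'|^{p-2}v_a'+a^{1-q}\bigl(sv_a'+\alpha v_a\bigr)+|v_a|^{q-1}v_a=0.
\]
Since $q>1$, the perturbation coefficient $a^{1-q}$ tends to $0$. Let $v_\infty$ be the unique solution of (\ref{vq}) with $v_\infty(0)=1$, $v_\infty'(0)=0$, obtained by the fixed-point argument of Theorem \ref{exi}. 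Iterating that argument in the presence of the small perturbation, together with the continuous dependence of Remark \ref{dep}, yields $v_a\to v_\infty$ in $C^1$ on compact subsets of $[0,\infty)$.

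The core step is then the oscillation of $v_\infty$: for $p>p_2$ and $q<q^{\ast}$, the radial solution of (\ref{vq}) with $v_\infty(0)=1$, $v_\infty'(0)=0$ possesses an infinite increasing sequence of zeros $0<\sigma_1<\sigma_2<\cdots$ with $v_\infty'(\sigma_k)\neq 0$. This is the $p$-Laplace analogue of \cite[Prop.~3.4]{W1}, and is proved using the Pohozaev-Pucci-Serrin identity derived from (\ref{vla}) with $\alpha=e=0$ to rule out both nonnegative ground states of $-\text{div}(|\nabla v|^{p-2}\nabla v)=|v|^{q-1}v$ on $[0,\infty)$ and constant-sign solutions on a half-line past a hypothetical last zero, in the Sobolev-subcritical regime $q<q^{\ast}$.

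Finally, for any fixed $m$ and $a$ large enough, the $C^1$-convergence on $[0,\sigma_{m+1}+1]$ produces zeros $\sigma_1(a)<\cdots<\sigma_{m+1}(a)$ of $v_a$ with $\sigma_k(a)\to\sigma_k$ and $v_a'(\sigma_k(a))\neq 0$; hence $w(\cdot,a)$ has the isolated zeros $r_k(a)=\lambda_a^{-1}\sigma_k(a)$ for $k=1,\dots,m+1$, and $r_k(a)\to 0$ as $\lambda_a\to\infty$. The main obstacle I expect is the rigorous oscillation statement for $v_\infty$: while the Pohozaev identity itself is routine, simultaneously excluding a last zero and any non-isolated zero requires the full asymptotic analysis of (\ref{vq}), in the spirit of \cite{W1,W2} for $p=2$, and is the technical core of the argument.
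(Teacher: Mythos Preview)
Your approach is essentially the same as the paper's: the rescaling $\lambda_a=a^{(q+1-p)/p}=a^{1/\alpha_0}$ is exactly the one used there, and the limiting equation is (\ref{vq}), for which the paper invokes the known oscillation results \cite{Bi}, \cite{BiPo}, \cite{SZ} rather than reproving them. The only methodological difference is that the paper argues by contradiction (assume a sequence $a_n\to\infty$ with at most $m$ zeros, pass to a subsequential $C^1_{loc}$ limit via uniform energy bounds and Ascoli--Arzel\`a, contradict the oscillation of $v$), whereas you argue directly; your appeal to Remark~\ref{dep} for the convergence $v_a\to v_\infty$ is slightly off, since that remark concerns dependence on initial data rather than on the perturbation parameter $a^{1-q}$, and the honest route is the compactness argument the paper gives.
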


\begin{proof}
(i) First we show that there exists $a_{\ast}>0,$ such that for any
$a>a_{\ast},$ $w(.,a)$ cannot stay positive on $\left[  0,\infty\right)  $.
Suppose that there exists $\left(  a_{n}\right)  $ tending to $\infty,$ such
that $w_{n}(r)=w(r,a_{n})\geq0$ on $\left[  0,\infty\right)  $, and let
\begin{equation}
v_{n}(r)=a_{n}^{-1}w_{n}(a_{n}^{-1/\alpha_{0}}r). \label{cna}%
\end{equation}
Then $v_{n}(0)=1,$ $v_{n}^{\prime}(0)=0$ and $v_{n}$ satisfies the equation%
\begin{equation}
\left(  r^{N}(a_{n}^{1-q}v_{n}+r^{-1}\left\vert v_{n}^{\prime}\right\vert
^{p-2}v_{n}^{\prime})\right)  ^{\prime}+r^{N-1}\left(  (\alpha-N)a_{n}%
^{1-q}v_{n}+\left\vert v_{n}\right\vert ^{q-1}v_{n}\right)  =0. \label{vvo}%
\end{equation}
$\ $From (\ref{mn}) applied to $w_{n}$
\[
v_{n}(r)\leq1,\qquad\left\vert v_{n}^{\prime}(r)\right\vert ^{p}\leq
p^{\prime}\left(  \frac{\alpha}{2}a_{n}^{1-q}+\frac{1}{q+1}\right)  \text{
}\qquad\text{in }\left[  0,\infty\right)  ,
\]
thus $v_{n}$ and $v_{n}^{\prime}$ are uniformly bounded in $\left[
0,\infty\right)  .$ If $p\leq2,$ then $v_{n}^{\prime\prime}$ is uniformly
bounded on any compact $\mathcal{K}$ of $\left(  0,\infty\right)  ,$ from
(\ref{un}), and up to a diagonal sequence, $v_{n}$ converges uniformly in
$C_{loc}^{1}\left(  0,\infty\right)  $ to a function $v$. If $p>2,$ then, from
(\ref{vvo}), the derivatives of $r^{N}(a_{n}^{1-q}v_{n}+\left\vert
v_{n}^{\prime}\right\vert ^{p-2}v_{n}^{\prime})$ are uniformly bounded on any
$\mathcal{K}$, and $a_{n}^{1-q}v_{n}$ converges unifomly to $0$ in $\left[
0,\infty\right)  ,$ and up to a diagonal sequence, $\left\vert v_{n}^{\prime
}\right\vert ^{p-2}v_{n}^{\prime}$ converges uniformly on any $\mathcal{K}$,
hence also $v_{n}^{\prime},$ thus $v_{n}$ converges uniformly in $C_{loc}%
^{1}\left(  0,\infty\right)  $ to a nonnegative function $v\in C^{1}\left(
0,\infty\right)  .$ For any $r>0,$
\[
\left\vert v_{n}^{\prime}\right\vert ^{p-2}v_{n}^{\prime}(r)=-a_{n}%
^{1-q}rv_{n}(r)+r^{1-N}%
%TCIMACRO{\dint \nolimits_{0}^{r}}%
%BeginExpansion
{\displaystyle\int\nolimits_{0}^{r}}
%EndExpansion
s^{N-1}\left(  a_{n}^{1-q}(N-\alpha)v_{n}-\left\vert v_{n}\right\vert
^{q-1}v_{n}\right)  ds,
\]
hence
\begin{equation}
\left\vert v^{\prime}\right\vert ^{p-2}v^{\prime}(r)=-r^{1-N}%
%TCIMACRO{\dint \nolimits_{0}^{r}}%
%BeginExpansion
{\displaystyle\int\nolimits_{0}^{r}}
%EndExpansion
s^{N-1}\left\vert v\right\vert ^{q-1}vds\qquad\text{in }\left(  0,\infty
\right)  . \label{vit}%
\end{equation}
In particular $v^{\prime}(r)\rightarrow0$ as $r\rightarrow0,$ hence $v$ can be
extended in a function in $C^{1}(\left[  0,\infty\right)  ),$ such that
$v(0)=1,$ and $v^{\prime}(r)<0.$ Using the form (\ref{un}) for the equation in
$v_{n},$ $v_{n}^{\prime\prime}$ converges uniformly on any $\mathcal{K},$
hence $v\in C^{2}\left(  0,\infty\right)  \cap$ $C^{1}(\left[  0,\infty
\right)  )$ and is solution of the equation (\ref{vq}) such that $v(0)=1$ and
$v^{\prime}0)=0$. But this equation has no nonnegative solution except $0$
since $q<q^{\ast}.$ Moreover the zeros of function $v$ are all isolated, and
form a sequence $\left(  r_{n}\right)  $ tending to $\infty,$ see \cite{Bi},
\cite{BiPo} and \cite{SZ}. Then we reach a contradiction.\medskip

\noindent(ii) Now let $m\geq0.$ As in \cite[Proposition 3.4]{W1}, assume that
there exists a sequence $\left(  a_{n}\right)  $ tending to $\infty,$ such
that $w_{n}(r)=w(r,a_{n})$ has at most $m$ isolated zeros, hence also $v_{n}$.
Up to a subsequence we can suppose that all the $v_{n}(r)$ have the same
number of isolated zeros $\overline{m}:r_{0,n},r_{1,n},..,r_{\overline{m},n}.$
Let $M>0$ such that $r_{0},r_{1},..,r_{\overline{m}}\in\left(  0,M\right)  .$
Then for $n$ large enough, $r_{0,n},r_{1,n},..,r_{\overline{m},n}\in\left(
0,M+1\right)  .$ Either $v_{n}(r)$ has no zero on $\left[  M+1,\infty\right)
,$ or there is a unique zero $r_{\overline{m},n+1}$ such that $v_{n}(r)$ has a
compact support $\left[  0,r_{\overline{m},n+1}\right]  .$ Up to a
subsequence, all the $v_{n}$ are nonnegative or nonpositive on $\left[
M+1,\infty\right)  ;$ then the same holds for $v,$ and we get a contradiction.
Thus for $a$ large enough, $w(.,a)$ has at least $m+1$ zeros. Moreover, as in
\cite{W1}, the $m$ first zeros stay in a compact set, and from (\ref{cna}) the
$m^{th}$ zero of $w(.,a)$ tends to $0$ as $a\rightarrow\infty.\medskip$
\end{proof}

Now we make a scaling leading to the problem without source
\begin{equation}
r^{1-N}\left(  r^{N-1}\left\vert v^{\prime}\right\vert ^{p-2}v^{\prime
}\right)  ^{\prime}+rv^{\prime}+\alpha v=0. \label{hg}%
\end{equation}
It gives informations when the regular solutions of (\ref{hg}) are changing
sign, in particular $p_{2}<p<2,$ and $\delta<\alpha$. It does not depend on
the value of $q.$

\begin{proposition}
Assume that $p_{2}<p<2,$ $\delta<\alpha.$ Then there exists an $\alpha_{c}%
\in\left(  \eta,\alpha^{\ast}\right)  $ such that if $\alpha>\alpha_{c},$ then
for any $m\in\mathbb{N},$ there exists $\overline{a_{m}}$ such that for any
$0<a<\overline{a_{m}},$ $w(.,a)$ admits at least $m+1$ isolated zeros. And for
fixed $m,$ the $m^{th}$ zero of $w(.,a)$ tends to $0$ as $a$ tends to
$\infty.$
\end{proposition}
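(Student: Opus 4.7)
The argument parallels Proposition~\ref{sig} but with a rescaling designed to eliminate the source term $|w|^{q-1}w$ instead of the convective term $rw'$, producing the source-free equation (\ref{hg}) in the limit. For $a>0$ I would set
\[
v_a(s)=a^{-1}\,w(a^{(p-2)/p}s,\,a),\qquad s\ge 0,
\]
which is well defined because $p<2$. A direct substitution in (\ref{un}) gives $v_a(0)=1$, $v_a'(0)=0$, and
\[
(|v_a'|^{p-2}v_a')'+\frac{N-1}{s}|v_a'|^{p-2}v_a'+sv_a'+\alpha v_a+a^{q-1}|v_a|^{q-1}v_a=0.
\]
Since $q>1$, the coefficient $a^{q-1}$ in front of the source tends to $0$ as $a\to 0^+$, which identifies the correct regime of $a$.

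Next, the energy inequality (\ref{mn}) applied to $w(\cdot,a)$ yields bounds on $v_a$ and $v_a'$ on $[0,\infty)$ that are uniform in small $a$. Following the compactness argument in the proof of Proposition~\ref{sig} (the case $p<2$ is direct from Cauchy--Lipschitz on (\ref{un}) and the continuous dependence of Remark~\ref{dep}), along any sequence $a_n\to 0^+$ one extracts a subsequence with $v_{a_n}\to V$ in $C^1_{\mathrm{loc}}([0,\infty))$, where $V\in C^1([0,\infty))\cap C^2((0,\infty))$ is the unique regular solution of the source-free problem (\ref{hg}) with $V(0)=1$, $V'(0)=0$.

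The heart of the proof is the oscillatory behaviour of $V$. Under the standing assumptions $p_2<p<2$ and $\delta<\alpha$, the autonomous system (\ref{aut}) admits the nontrivial stationary point $M_\ell$ given by (\ref{sta})--(\ref{ell}). The phase-plane analysis performed in \cite{Bi1} identifies a critical value $\alpha_c\in(\eta,\alpha^\ast)$ at which the eigenvalues of the linearisation of (\ref{aut}) at $M_\ell$ become purely imaginary, and for $\alpha>\alpha_c$ the regular solution $V$ of (\ref{hg}) is oscillatory with an infinite sequence of isolated zeros. This is the main obstacle of the proof; I would invoke the results of \cite{Bi1} rather than reprove them from scratch.

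To conclude, fix $m\in\mathbb{N}$: the first $m+1$ zeros of $V$ are simple and lie in some compact interval $[0,S_m]$. By $C^1_{\mathrm{loc}}$ convergence and the simplicity (hence transversality) of these zeros, there exists $\overline{a_m}>0$ such that for every $a\in(0,\overline{a_m})$ the rescaled solution $v_a$ has at least $m+1$ isolated zeros in $[0,S_m+1]$. Undoing the scaling via $r=a^{(p-2)/p}s$, the original function $w(\cdot,a)$ inherits at least $m+1$ isolated zeros, located at $a^{(p-2)/p}$ times the corresponding zeros of $v_a$; this also yields the claimed asymptotics for the position of the $m$th zero as dictated by the sign of the exponent $(p-2)/p$.
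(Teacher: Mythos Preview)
Your argument is essentially the paper's own: the rescaling $v_a(s)=a^{-1}w(a^{(p-2)/p}s,a)$ is exactly $v_n(r)=a_n^{-1}w_n(a_n^{-1/\delta}r)$ (since $1/\delta=(2-p)/p$), and both proofs then pass to the limit $a\to 0^+$ to obtain the regular solution of (\ref{hg}), invoke \cite{Bi1} for its oscillation when $\alpha>\alpha_c$, and conclude by $C^1_{\mathrm{loc}}$ convergence as in Proposition~\ref{sig}.

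One small slip: you describe $\alpha_c$ as the value ``at which the eigenvalues of the linearisation of (\ref{aut}) at $M_\ell$ become purely imaginary''. That value is $\alpha^\ast$, not $\alpha_c$ (see (\ref{eto}) and the sentence following it). The critical $\alpha_c\in(\eta,\alpha^\ast)$ is defined in \cite{Bi1} as the threshold above which the \emph{regular} solutions of (\ref{hg}) become oscillatory; it is not characterised by the linearisation at $M_\ell$. This does not affect your proof, since you correctly use only the oscillation conclusion from \cite{Bi1}.
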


\begin{proof}
Suppose that there exists $\left(  a_{n}\right)  $ tending to $0,$ such that
$w_{n}(r)=w(r,a_{n})\geq0$ on $\left[  0,\infty\right)  $, and let
\[
v_{n}(r)=a_{n}^{-1}w_{n}(a_{n}^{-1/\delta}r).
\]
Then $v_{n}(0)=1,$ $v_{n}^{\prime}(0)=0$ and $v_{n}$ satisfies equation
\[
\left(  r^{N}(v_{n}+r^{-1}\left\vert v_{n}^{\prime}\right\vert ^{p-2}%
v_{n}^{\prime})\right)  ^{\prime}+r^{N-1}\left(  (\alpha-N)v_{n}+a_{n}%
^{q-1}\left\vert v_{n}\right\vert ^{q-1}v_{n}\right)  =0,
\]
and estimates%
\[
v_{n}(r)\leq1,\qquad\left\vert v_{n}^{\prime}(r)\right\vert ^{p}\leq
p^{\prime}\left(  \frac{\alpha}{2}+\frac{a_{n}^{q-1}}{q+1}\right)
\qquad\text{in }\left[  0,\infty\right)  .
\]
As above we construct a solution $v\in C^{2}\left(  0,\infty\right)  \cap$
$C^{1}(\left[  0,\infty\right)  )$ of the equation (\ref{hg}). But from
\cite{Bi1}, there exists $\alpha_{c}\in\left(  \eta,\alpha^{\ast}\right)  $
such that the regular solutions of (\ref{hg}) are oscillating for
$\alpha>\alpha_{c}$, hence we conclude as above.
\end{proof}

\begin{remark}
This scaling does not give any result when the regular solutions of (\ref{hg})
have a constant sign: it is the case for example when $\alpha=N:$ they are the
Barenblatt solutions, they have a compact support when $p>2$ and a behaviour
in $r^{-\delta}$ near $\infty$ when $p<2.$ Nevertheless if $p>p_{1},$ all the
solutions $w(.,a)$ of (\ref{un}) have at least one zero, from Proposition
\ref{zer}.
\end{remark}

\subsection{Upper estimates of the solutions}

Here we get the behaviour at infinity for solutions \textit{of any sign}. We
extend the results of \cite{HW} obtained for $p=2,$ giving upper estimates
with continous dependence, which also improve the results of \cite{Qi}:

\begin{proposition}
\label{der} Let $d\geq0.$

\noindent(i) Assume that the solution $w$ of problem (\ref{un}), (\ref{ini})
satisfies
\begin{equation}
\left\vert w(r)\right\vert \leq C_{d}(1+r)^{-d}, \label{cor}%
\end{equation}
on $\left[  0,\infty\right)  ,$ for some $C_{d}>0,$ then there exists another
$C_{d}^{\prime}>0,$ depending continuously on $C_{d},$ such that%
\begin{equation}
\left\vert w^{\prime}(r)\right\vert \leq C_{d}^{\prime}(1+r)^{-d-1}.
\label{cer}%
\end{equation}

\noindent(ii) For any solution of (\ref{un}) such that $w(r)=O(r^{-d})$ near
$\infty,$ then $w^{\prime}(r)=O(r^{-d-1})$ near $\infty.$
\end{proposition}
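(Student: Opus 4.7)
The central tool is the function $J_N(r)=r^N(w+r^{-1}|w'|^{p-2}w')$ from (\ref{gg}). By $J_N(0)=0$ and (\ref{jpn}), integration yields
\[
J_N(r)=\int_0^r s^{N-1}(N-\alpha-|w|^{q-1})w\,ds,
\]
and the defining identity $r^{N-1}|w'|^{p-2}w'=J_N(r)-r^N w(r)$ then gives
\[
|w'(r)|^{p-1}\le r|w(r)|+r^{1-N}|J_N(r)|. \tag{$\ast$}
\]
For part (i), I plug the hypothesis $|w|\le C_d(1+r)^{-d}$ together with the a priori sup bound $|w|\le a$ from Theorem \ref{exi} (so $|w|^{q-1}$ is continuously bounded in terms of $C_d$) into the integrand of $J_N$. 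This produces $|J_N(r)|\le K(1+r)^{\max(N-d,0)}$, with a logarithmic factor in the borderline case $d=N$, where $K$ depends continuously on $C_d$. Substituting into $(\ast)$ yields a first polynomial estimate on $|w'|$.

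To extract the sharp rate $(1+r)^{-d-1}$, the plan is to localize on an interval $[r,2r]$. The mean value theorem applied to the hypothesis $|w|\le C_d(1+r)^{-d}$ produces a point $\xi_r\in[r,2r]$ with $|w'(\xi_r)|\le K_1(1+r)^{-d-1}$. This pointwise bound is then propagated across $[r,2r]$ by integrating the equation in the form (\ref{var}) between $\xi_r$ and an arbitrary $\rho\in[r,2r]$, using $(\ast)$ and the preliminary estimate to absorb the nonlinear contributions. For $p=2$ this propagation collapses to a Haraux-Weissler computation: the integrating factor $r^{N-1}e^{r^2/2}$ annihilates the troublesome $rw'$ term in (\ref{var}), producing an explicit representation for $w'$ whose Laplace asymptotics yield exactly the decay $(1+r)^{-d-1}$. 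For $p\ne 2$, no linear integrating factor exists, and the propagation is instead closed by a Gronwall-type estimate on $|w'|^{p-1}$, controlling the forcing $r\cdot w'$ by the preliminary bound on $|w'|$ and the polynomial bounds on $|w|$, $|w|^q$. All the constants enter continuously in $C_d$, producing $C_d'$.

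Part (ii) follows from the same argument applied on $[r_0,\infty)$ for $r_0$ large enough that $|w|=O(r^{-d})$ holds; the value $J_N(r_0)$ appears as a fixed additive constant that is absorbed into the $O$-notation by the $r^{1-N}$ prefactor.

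The main obstacle will be the nonlinear propagation step for $p\ne 2$, where the Haraux-Weissler integrating factor is unavailable: one has to balance the forcing term $r\cdot w'$ against the dissipation coming from the principal part $(|w'|^{p-2}w')'$, and close the bootstrap between $(\ast)$ and the Gronwall inequality at the precise exponent $-d-1$.
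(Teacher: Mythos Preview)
Your plan has a genuine gap at the propagation step for $p\neq 2$. The Gronwall-type argument you sketch cannot close at the sharp exponent $-(d+1)$, and the obstruction is exactly the term you flag: the forcing $rw'$. Concretely, integrating (\ref{var}) from $\xi_r$ to $\rho\in[r,2r]$ and using your mean-value point $\xi_r$ gives, after the integration by parts $\int s^N w'\,ds = [s^N w]-N\int s^{N-1}w\,ds$,
\[
\rho^{N-1}|w'(\rho)|^{p-1}\ \le\ C\,r^{N-1-(d+1)(p-1)}\ +\ C\,r^{N-d},
\]
so that $|w'(\rho)|^{p-1}\le C\bigl(r^{-(d+1)(p-1)}+r^{1-d}\bigr)$. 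The second term dominates unless $1-d\le -(d+1)(p-1)$, i.e.\ $d(2-p)\ge p$; this never holds for $p>2$, and for $p<2$ it fails for all $d<\delta=p/(2-p)$. A direct nonlinear Gronwall on $g=r^{N-1}|w'|^{p-2}w'$ runs into the same exponent count. So the bootstrap does not improve the crude bound from $(\ast)$; the mean-value input is simply swamped by the $O(r^{N-d})$ contribution of the zero-order terms.

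The paper's proof avoids this by finding the correct integrating factor for $p\neq 2$: it is not linear, but it exists. One sets
\[
f_R(r)=\exp\!\Bigl(\tfrac{1}{p-1}\int_R^r s\,|w'(s)|^{2-p}\,ds\Bigr),
\]
which satisfies $(p-1)|w'|^{p-2}f_R' = r f_R$, so that multiplying the equation by $f_R$ turns the bad term $rw'$ into an exact derivative. A short computation then yields, for $k>1+d$ and $K=k-(N-1)/(p-1)>0$,
\[
\bigl(r^k f_R(w'-Kr^{-1}w)\bigr)'=-K(k-1)r^{k-2}f_R w - r^{k-1}f_R'\,w\,(\alpha+K+|w|^{q-1}).
\]
Integrating from $R=0$ and dividing by the increasing function $f_0$ (using $f_0(s)\le f_0(r)$ and $\int_0^r f_0'\le f_0(r)$) gives directly $r^k|w'(r)|\le \tilde C_d\, r^{k-1-d}$ with $\tilde C_d$ continuous in $C_d$. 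No preliminary estimate or localization is needed; the integrating factor does all the work. For part (ii) one simply takes $R>0$ large, picking up a harmless boundary term $R^{k-1}(Rw'(R)-Kw(R))$.

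In short: your statement that ``for $p\neq 2$ no linear integrating factor exists'' is correct, but the remedy is not Gronwall; it is the nonlinear factor $f_R$ above, which is the exact $p$-Laplace analogue of the Haraux--Weissler factor $e^{r^2/2}$.
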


\begin{proof}
(i) We can assume that $w\not \equiv 0.$ Let $r\geq R\geq0;$ we set
\begin{equation}
f_{R}(r)=\exp\left(  \frac{1}{p-1}%
%TCIMACRO{\dint \nolimits_{R}^{r}}%
%BeginExpansion
{\displaystyle\int\nolimits_{R}^{r}}
%EndExpansion
s\left\vert w^{\prime}\right\vert ^{2-p}ds\right)  . \label{fro}%
\end{equation}
The function is well defined when $p<2$ from (\ref{pri}), and $f_{R}\in
C^{1}(\left[  R,\infty\right)  )$. When $p>2,$ from Proposition \ref{zer},
(iv), the function $w$ has a finite number of isolated zeros and either there
exists a first $\bar{r}>0$ such that $w(\bar{r})=w^{\prime}(\bar{r})=0,$ or
$w$ has no zero for large $r,$ and we set $\bar{r}=\infty.$ In the last case
case, from Proposition \ref{pro}, the set of zeros of $w^{\prime}$ is bounded.
If $w^{\prime}(\tilde{r})=0$ for some $\tilde{r}\in\left(  0,\bar{r}\right)
,$ then, from (\ref{un}), $(\left\vert w^{\prime}\right\vert ^{p-2}w^{\prime
})^{\prime}$ has a nonzero limit $\lambda$ at $\tilde{r}$, hence $\tilde{r}$
is an isolated zero of $w$ and
\[
\left\vert w^{\prime}(s)\right\vert ^{2-p}=\left\vert \lambda\right\vert
^{(2-p)/(p-1)}(s-\tilde{r})^{-1+1/(p-1)}(1+o(1))
\]
near $\tilde{r}$. Then $s\left\vert w^{\prime}\right\vert ^{2-p}\in
L_{loc}^{1}\left(  R,\infty\right)  $, thus $f_{R}$ is absolutely continuous
on $\left[  R,\bar{r}\right)  $ if $\bar{r}=\infty.$ Let $k=k(N,p,d)>0$ be a
parameter, such that $K=k-(N-1)/(p-1)>0$, and $k>1+d.$ By computation, for
almost any $r\in\left(  R,\bar{r}\right)  ,$%
\[
\left(  r^{k}f_{R}(w^{\prime}-Kr^{-1}w\right)  ^{\prime}=-K(k-1)r^{k-2}%
f_{R}w-r^{k-1}f_{R}^{\prime}w(\alpha+K+\left\vert w\right\vert ^{q-1})
\]
hence for any $r\in\left[  R,\bar{r}\right)  ,$
\begin{align}
r^{k}f_{R}w^{\prime}  &  =R^{k-1}(Rw^{\prime}(R)-Kw(R))+Kr^{k-1}%
f_{R}w\nonumber\\
&  -K(k-1)%
%TCIMACRO{\dint \limits_{R}^{r}}%
%BeginExpansion
{\displaystyle\int\limits_{R}^{r}}
%EndExpansion
s^{k-2}f_{R}wds-%
%TCIMACRO{\dint \limits_{R}^{r}}%
%BeginExpansion
{\displaystyle\int\limits_{R}^{r}}
%EndExpansion
s^{k-1}f_{R}^{\prime}w(\alpha+K+\left\vert w\right\vert ^{q-1})ds.
\label{plot}%
\end{align}
Assume (\ref{cor}) and take $R=0,$ and divide by $f_{0}.$ From our choice of
$k$, and since $f^{\prime}\geq0,$ we obtain
\[
r^{k}\left\vert w^{\prime}(r)\right\vert \leq\tilde{C}_{d}r^{k-1-d}%
\]
on $\left[  0,\bar{r}\right)  $ and then on $\left[  0,\infty\right)  ,$ where
$\tilde{C}_{d}=C_{d}(K+K(k-1)/(k-1-d)+\alpha+K)+C_{d}^{q-1},$ and
$K=K(N,p,d);$ this holds in particular on $\left[  1,\infty\right)  ;$ on
$\left[  0,1\right]  ,$ from (\ref{mn}),
\[
\left\vert w^{\prime}(r)\right\vert \leq p^{\prime}(\alpha C_{d}/2+C_{d}%
^{q-1}),
\]
and (\ref{cer}) holds.\medskip

\noindent(ii) Let $R\geq1$ such that $w$ is defined on $\left[  R,\infty
\right)  $ and $w(r)\leq C_{d}r^{-d}$ on $\left[  R,\infty\right)  .$ Defining
$\bar{r}$ as above and dividing (\ref{plot}) by $f_{R}$ and observing that
$f_{R}(r)\geq1,$ and $R^{k}\leq$ $R^{k-1-d}\leq r^{k-1-d},$ we deduce
\[
r^{k}\left\vert w^{\prime}(r)\right\vert \leq R^{k}\left\vert w^{\prime
}(R)\right\vert +C_{d}KR^{k-1-d}+\tilde{C}_{d}r^{k-1-d}\leq(\left\vert
w^{\prime}(R)\right\vert +C_{d}K+\tilde{C}_{d})r^{k-1-d}%
\]
on $\left[  R,\bar{r}\right)  $ and then on $\left[  R,\infty\right)  ,$ and
we conclude again.
\end{proof}

\begin{proposition}
\label{uti} (i) For any $\gamma\geq0$ if $p>2,$ any $\gamma\in\left[
0,\delta\right)  $ if $p<2,$ any solution of (\ref{un}) satifies near
$\infty$
\begin{equation}
w(r)=O(r^{-\gamma})+O(r^{-\alpha}). \label{esu}%
\end{equation}
(ii) The solution $w=w(.,a)$ of problem (\ref{un}), (\ref{ini}) satisfies%
\begin{equation}
\left\vert w(r,a)\right\vert \leq C_{\gamma}(a)((1+r)^{-\gamma}+(1+r)^{-\alpha
}), \label{esa}%
\end{equation}
where $C_{\gamma}(a)$ is continuous with respect to $a$ on $\mathbb{R}.$
\end{proposition}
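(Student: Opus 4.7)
I bootstrap in the decay exponent, combining Proposition~\ref{der} with the integral identity (\ref{jpk}) for $J_\alpha$, starting from the trivial bound supplied by Theorem~\ref{exi}.

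\textbf{Setup.} From Theorem~\ref{exi} we have $|w(r,a)|\leq|a|$ on $[0,\infty)$, which is (\ref{esa}) with $\gamma=0$ up to a harmless constant (the additive $(1+r)^{-\alpha}$ term simply absorbs into the constant for small $r$). Proposition~\ref{der}(i) applied with $d=0$ yields $|w'(r,a)|\leq C_0'(a)(1+r)^{-1}$ with $C_0'$ continuous in $a$.

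\textbf{Iterative step.} Assume inductively that $|w(r,a)|\leq C_\gamma(a)(1+r)^{-\gamma}$ for some $\gamma>0$, with $C_\gamma$ continuous in $a$; Proposition~\ref{der} then furnishes $|w'(r,a)|\leq C_\gamma'(a)(1+r)^{-\gamma-1}$. Plugging these into the integrated form of (\ref{jpk}),
\[
J_\alpha(r)=J_\alpha(1)+\int_1^r s^{\alpha-1}\Bigl((\alpha-N)s^{-1}|w'|^{p-2}w'-|w|^{q-1}w\Bigr)\,ds,
\]
the integrand is bounded by $C\bigl(s^{\alpha-2-(\gamma+1)(p-1)}+s^{\alpha-1-\gamma q}\bigr)$, so that $|J_\alpha(r)|\leq K(1+r)^{\alpha-\gamma'}$ with $\gamma'=\min(\gamma q,\,p+\gamma(p-1))$ and $K$ continuous in $a$. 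Combined with the pointwise bound $r^\alpha|w(r)|\leq|J_\alpha(r)|+r^{\alpha-1}|w'(r)|^{p-1}$, this yields
\[
|w(r,a)|\leq C_{\gamma'}(a)\bigl((1+r)^{-\gamma'}+(1+r)^{-\alpha}\bigr),
\]
with $C_{\gamma'}$ continuous in $a$; once $\gamma'\geq\alpha$ the $(1+r)^{-\alpha}$ term dominates and we have reached the conclusion $w=O(r^{-\alpha})$.

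\textbf{Termination and main obstacle.} Since $q>1$, $\gamma q>\gamma$ whenever $\gamma>0$, and $p+\gamma(p-1)>\gamma$ holds unconditionally when $p>2$ and iff $\gamma<\delta$ when $p<2$. Iterating therefore produces $\gamma_n$ strictly increasing to $+\infty$ for $p>2$, and to $\delta$ for $p<2$, reaching any prescribed $\gamma$ in the allowed range in finitely many steps; continuous dependence of the constants on $a$ propagates through the iteration, giving (ii). The delicate point is the \emph{base case} $\gamma=0\rightarrow\gamma_1>0$, since $\gamma q=0$ leaves the step vacuous at $\gamma=0$. I handle this initial step by selecting a Pucci--Serrin function $V_{\lambda,\sigma,e}$ from (\ref{vla}) with $\lambda>0$ and $\sigma$, $e$ adjusted, via the identity (\ref{dif}), so that $V_{\lambda,\sigma,e}$ is nonincreasing on $(0,\infty)$; the bound $V_{\lambda,\sigma,e}(r)\leq V_{\lambda,\sigma,e}(0^+)$ together with the $|w|^{q+1}$ piece in its definition gives $(1+r)^{\lambda/(q+1)}|w(r,a)|\leq C(a)$, providing an initial $\gamma_1=\lambda/(q+1)>0$ with continuous dependence on $a$ from which the iterative bootstrap takes over.
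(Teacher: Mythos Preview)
Your bootstrap via $J_\alpha$ is sound once it is running, but the base case is a genuine gap. The proposed fix---choosing $\lambda>0$, $\sigma$, $e$ so that $V_{\lambda,\sigma,e}$ is nonincreasing---is asserted, not proved, and in fact no such choice works for all $N,p,q,\alpha$. Look at (\ref{dif}): the cross term $\sigma(\lambda-N)r^{-1}w|w'|^{p-2}w'$ has no sign unless $\sigma=0$ or $\lambda=N$. If $\sigma=0$ the coefficient of $|w|^{q+1}$ in $V'$ becomes $+\lambda/(q+1)>0$, the wrong sign. If $\lambda=N$ you need simultaneously $\sigma\geq N/(q+1)$ (for the $|w|^{q+1}$ term) and $\sigma\leq (N-p)/p$ (for the $|w'|^p$ term); for $N=1$ the latter is negative and the window is empty. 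Even when a monotone $V$ exists, you still have to argue that the possibly negative pieces $e w^2/2$ and $\sigma r^{-1}w|w'|^{p-2}w'$ inside $V$ do not swallow the $|w|^{q+1}$ term; you have not done this.

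The paper avoids the whole issue by bootstrapping a different quantity. It sets
\[
F(r)=\tfrac12 w^2+r^{-1}|w'|^{p-2}w'\,w,
\]
computes
\[
(r^{2\alpha}F)'=r^{2\alpha-1}\bigl(|w'|^p+(2\alpha-N)r^{-1}|w'|^{p-2}w'\,w-|w|^{q+1}\bigr)\leq r^{2\alpha-1}\bigl(|w'|^p+(2\alpha-N)r^{-1}|w'|^{p-2}w'\,w\bigr),
\]
and simply drops the favourable term $-|w|^{q+1}$. What remains is controlled purely by the gradient estimate of Proposition~\ref{der}: if $|w|\leq Cr^{-d}$ then $|w'|\leq C'r^{-d-1}$ and both surviving terms are $O(r^{-(d+1)p})$, giving $F(r)\leq C(r^{-(d+1)p}+r^{-2\alpha})$ and hence $|w|\leq C(r^{-(d+1)p/2}+r^{-\alpha})$. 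The iteration $d_{n+1}=(d_n+1)p/2$ starts at $d_0=0$ with $d_1=p/2>0$ without any special first step, and converges to $\infty$ for $p>2$ and to $\delta$ for $p<2$. The point is that by working with $w^2$ rather than $J_\alpha$ (which carries $|w|^q$), the source term enters with a good sign and can be discarded, so the bootstrap is driven entirely by the derivative decay, which is nontrivial already at $d=0$.
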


\begin{proof}
(i) Here we simplify the proofs of \cite{HW} and \cite{Qi}: using equation
(\ref{un}), the function $F$ defined by
\begin{equation}
F(r)=\frac{1}{2}w^{2}+r^{-1}\left\vert w^{\prime}\right\vert ^{p-2}w^{\prime
}w, \label{the}%
\end{equation}
satisfies the relation
\begin{align*}
(r^{2\alpha}F)^{\prime}  &  =r^{2\alpha-1}(\left\vert w^{\prime}\right\vert
^{p}+(2\alpha-N)r^{-1}\left\vert w^{\prime}\right\vert ^{p-2}w^{\prime
}w-\left\vert w\right\vert ^{q+1})\\
&  \leq r^{2\alpha-1}(\left\vert w^{\prime}\right\vert ^{p}+(2\alpha
-N)r^{-1}\left\vert w^{\prime}\right\vert ^{p-2}w^{\prime}w).
\end{align*}
Assume that for some $d\geq0$ and $R>0,$ $\left\vert w(r)\right\vert \leq
Cr^{-d}$ on $\left[  R,\infty\right)  .$ Then from Proposition \ref{der} there
exists other constants $C>0$ such that $\left(  r^{2\alpha}F\right)  ^{\prime
}\leq Cr^{2\alpha-1-(d+1)p}$ on $\left[  R,\infty\right)  $. Then $F(r)\leq
C(r^{-(d+1)p}+r^{-2\alpha})$ on $\left[  R,\infty\right)  $ if $(d+1)p\neq
2\alpha;$ and $r^{-1}\left\vert w^{\prime}\right\vert ^{p-1}\left\vert
w\right\vert \leq Cr^{-(d+1)p},$ thus
\[
\left\vert w(r)\right\vert \leq C(r^{-(d+1)p/2}+r^{-\alpha})
\]
on $\left[  R,\infty\right)  .\ $We know that $w$ is bounded on $\left[
R,\infty\right)  $ from Proposition \ref{pro}. Consider the sequence $\left(
d_{n}\right)  $ defined by $d_{0}=0,$ $d_{n+1}=(d_{n}+1)p/2.$ It is increasing
and tends to $\infty$ if $p\geq2$ and to $\delta$ if $p<2.$ After a finite
number of steps, we get (\ref{esu}) by changing slightly the sequence if it
takes the value $2\alpha/p-1$.\medskip\ 

\noindent(ii) We have $\left\vert w(.,a)\right\vert $ $\leq a,$ from Theorem
\ref{exi}. Assuming that for some $d\geq0,$ $\left\vert w(r,a)\right\vert \leq
C_{d}(a)(1+r)^{-d}$ on $\left[  0,\infty\right)  ,$ and $C_{d}$ is continuous,
then
\[
\left\vert w(r,a)\right\vert \leq\tilde{C}_{d}(a)((1+r)^{-(d+1)p/2}%
+(1+r)^{-\alpha})
\]
from Proposition \ref{der}, where $\tilde{C}_{d}$ is also continuous. We
deduce (\ref{esa}) as above, and $C_{\gamma}$ is continuous, since we use is a
finite number of steps. Notice in particular that $\lim_{a\rightarrow
0}C_{\gamma}(a)=0$.\medskip
\end{proof}

As a consequence we can extend a property of zeros given in \cite[Proposition
3.1]{W1} in case $p=2,$ which improves Proposition \ref{zer}:

\begin{proposition}
\label{tuc}Assume that $\alpha<N,$ or $p>2,$ or $\alpha<\eta.$ Given $A>0,$
there exists $M(A)>0$ such that if $0<\left\vert a\right\vert \leq A,$ then
the solution $w(.,a)$ of (\ref{un}), (\ref{ini}) has at most one isolated zero
outside $\left[  0,M(A)\right]  .$
\end{proposition}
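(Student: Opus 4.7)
The plan is to argue by contradiction, supposing there exist sequences $(a_n)$ with $0 < |a_n| \leq A$ and isolated zeros $\rho_{1,n} < \rho_{2,n}$ of $w_n := w(\cdot, a_n)$ with $\rho_{1,n} \to \infty$, which I may take consecutive in the isolated-zero set. I would first note that between two consecutive isolated zeros one has $|w_n| > 0$: when $p < 2$ every zero is isolated by the Cauchy--Lipschitz theorem; when $p > 2$, any $r$ with $w_n(r) = w_n'(r) = 0$ forces $w_n \equiv 0$ on $[r,\infty)$ by Theorem \ref{exi}, which would make $\rho_{2,n}$ non-isolated. Thus, changing sign if needed, $w_n > 0$ on $(\rho_{1,n}, \rho_{2,n})$.

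The key quantitative input is Proposition \ref{uti}(ii), which gives a uniform pointwise bound
$$|w(r, a)| \leq C^*(A)\bigl((1+r)^{-\gamma} + (1+r)^{-\alpha}\bigr), \qquad 0 < |a|\leq A,$$
with $C^*(A) := \sup_{|a| \leq A} C_\gamma(|a|) < \infty$ by continuity of $C_\gamma$; in particular $w_n(r) \to 0$ as $r\to\infty$ uniformly in $n$.

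I then split according to the hypothesis. If $\alpha < N$, the function $J_N$ from (\ref{jpn}) satisfies $J_N'(r) = r^{N-1}(N - \alpha - |w|^{q-1})w$; the uniform decay gives $|w_n|^{q-1} < N - \alpha$ for $r \geq R_0(A)$, so $J_N'$ has the sign of $w_n$ on $(\rho_{1,n}, \rho_{2,n})$, contradicting $J_N(\rho_{1,n}) \geq 0 \geq J_N(\rho_{2,n})$ once $\rho_{1,n} \geq R_0$. If $p > 2$, I pick $d > \alpha$ and apply inequality (\ref{tog}) at a maximum point $\rho_n^*$ of $|y_d|$ in $(\rho_{1,n}, \rho_{2,n})$; using the uniform bound with $\gamma \geq \alpha$ and the fact that $2 - p < 0$ turns the upper bound on $|w_n|$ into a lower bound on $|w_n|^{2-p}$, the left side of (\ref{tog}) grows like $\rho_n^{*\,p + \alpha(p-2)} \to \infty$, contradicting the fixed right side. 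If $\alpha < \eta$, I choose $d \in (\alpha, \eta)$: the right side of (\ref{tog}) is the negative constant $(p-1)d^{p-1}(d - \eta)$, while for $\rho_n^*$ large the factor $(d - \alpha - |w_n|^{q-1})$ is positive, so the left side is nonnegative, an immediate contradiction.

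In each case the threshold beyond which the contradiction is forced depends only on $A$, through $C^*(A)$ and the radius where $|w_n|$ becomes small enough for the relevant inequality; taking the maximum of these three thresholds yields the required $M(A)$. The main obstacle is exactly this uniformity in $a$: Proposition \ref{uti}(ii) was designed to provide it, its continuous dependence (with $\lim_{a \to 0} C_\gamma(a) = 0$) ensuring that $C^*(A)$ is finite; without it one only gets, for each fixed $a$, the finite number of isolated zeros furnished by Proposition \ref{zer}(iv), with no control across the family.
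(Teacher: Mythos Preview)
Your argument is correct and follows essentially the same route as the paper: the uniform decay from Proposition~\ref{uti}(ii), then the same three-case split using $J_N$ for $\alpha<N$ and inequality~(\ref{tog}) for $p>2$ and for $\alpha<\eta$. The only cosmetic difference is that in the $p>2$ case the paper uses the cruder bound $|w(\rho)|\leq a\leq A$ (so $|w(\rho)|^{p-2}\leq A^{p-2}$) rather than your decay estimate, yielding an explicit $M(A)=\max\bigl(R(A,\mu),(\mu'\mu^{-1}A^{p-2})^{1/p}\bigr)$; both versions work.
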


\begin{proof}
From Proposition \ref{zer}, $w(.,a)$ has a finite number of isolated zeros.
Let $\rho_{1}<\rho_{2}$ be its two last zeros, where by convention $\rho_{2}=$
$\bar{r}$ if $p>2$ and the function has a compact support $\left[  0,\bar
{r}\right]  .$ From Proposition \ref{uti}, for any $\mu>0,$ there exists
$R=R(A,\mu)>0$ such that $\max_{\left\vert a\right\vert \leq A,r\geq
R}\left\vert w(r,a)\right\vert \leq\mu^{1/(q-1)}.$ Also $\max_{\left\vert
a\right\vert \leq A,r\geq0}\left\vert w(r,a)\right\vert \leq A,$ from Theorem
\ref{exi}. As in Proposition \ref{zer}, we make the substitution (\ref{cge})
for some $d>0$. If $p>2,$ we choose $d>\alpha$, and fix $\mu=(d-\alpha)/2.$
Suppose that $\rho_{1}>R.$ Then from (\ref{tog}), denoting $\mu^{\prime
}=d^{p-1}((p-1)d-N+p),$ there exists $\rho\in\left(  \rho_{1},\rho_{2}\right)
$ such that$\rho^{p}\left\vert w(\rho)\right\vert ^{2-p}\left(  d-\alpha
-\left\vert w\right\vert ^{q-1}(\rho)\right)  \leq(p-1)d^{p-1}(d-\eta).$%
\[
\mu\rho^{p}\leq\mu^{\prime}\left\vert w(\rho)\right\vert ^{p-2}\leq\mu
^{\prime}A^{p-2}%
\]
Taking $M(A)=\max(R(A,\mu),(\mu^{\prime}\mu^{-1}A^{p-2})^{1/p}),$ we find
$\rho_{1}\leq M(A)$. If $p<2$ and $\alpha<\eta,$ taking $d\in\left(
\alpha,\eta\right)  $ and the same $\mu,$ and $M(A)=R(A,\mu)$, then $\rho
_{1}\leq M(A),$ from (\ref{tog}). If $p<2$ and $\alpha<N,$ we choose
$\mu=(N-\alpha)/2$ and $M(A)=R(A,\mu)$ and get $\rho_{1}\leq M(A)$ from
(\ref{tug}) by contradiction.
\end{proof}

\section{The case $(2-p)\alpha<p$\label{S3}}

In this paragraph, we suppose that $(2-p)\alpha<p,$ or equivalently,
\begin{equation}
p>2\text{ \qquad or }\quad\text{(}p<2\text{ and }\alpha<\delta). \label{log}%
\end{equation}

\subsection{Behaviour near infinity}

\begin{proposition}
\label{alp}Assume (\ref{log}) and $q>1$. For any solution $w$ of problem
(\ref{un}), there exists $L\in\mathbb{R}$ such that $\lim_{r\rightarrow\infty
}r^{\alpha}w=L.$
\end{proposition}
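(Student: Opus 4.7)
The plan is to exploit the auxiliary function $J_{\alpha}$ introduced at (\ref{jk}), whose derivative (\ref{jpk}) turns out to be integrable at infinity precisely under the standing hypothesis, and to check that $J_{\alpha}(r)$ and $r^{\alpha}w(r)$ differ by a term that vanishes at infinity.

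First I would pin down the correct decay rate for $w$ and $w'$. The hypothesis $(2-p)\alpha<p$ means either $p\geq2$ (no restriction on $\alpha$) or $p<2$ with $\alpha<\delta$. In either situation the range of admissible exponents in Proposition \ref{uti}(i) strictly contains $\alpha$: one picks $\gamma>\alpha$ freely if $p\geq2$, and $\gamma\in(\alpha,\delta)$ if $p<2$. Estimate (\ref{esu}) then collapses to $w(r)=O(r^{-\alpha})$ near infinity, and Proposition \ref{der}(ii) upgrades this to $w'(r)=O(r^{-\alpha-1})$.

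Next I would show the correction term in $J_{\alpha}$ is negligible. Under the above bounds,
\[
r^{\alpha-1}|w'|^{p-2}w'=O\!\left(r^{\alpha(2-p)-p}\right),
\]
and the exponent is strictly negative by the hypothesis. Hence $J_{\alpha}(r)=r^{\alpha}w(r)+o(1)$ as $r\to\infty$, so it is enough to prove that $J_{\alpha}$ has a finite limit at infinity.

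Finally I would use (\ref{jpk}) and estimate the two contributions to $J_{\alpha}'$ separately:
\[
r^{\alpha-2}|w'|^{p-1}=O\!\left(r^{\alpha(2-p)-p-1}\right),\qquad r^{\alpha-1}|w|^{q}=O\!\left(r^{-1-\alpha(q-1)}\right).
\]
The first exponent is strictly less than $-1$ by the assumption $(2-p)\alpha<p$, and the second is strictly less than $-1$ since $\alpha>0$ and $q>1$. Therefore $J_{\alpha}'\in L^{1}$ near infinity, so $J_{\alpha}(r)\to L\in\mathbb{R}$, and consequently $r^{\alpha}w(r)\to L$.

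The only real subtlety is in Step 1: one must secure the \emph{sharp} decay $w(r)=O(r^{-\alpha})$ rather than merely $O(r^{-\gamma})$ for some $\gamma<\alpha$. This is exactly where hypothesis (\ref{log}) intervenes, because for $p<2$ the admissible range in Proposition \ref{uti}(i) stops at $\delta$, and one needs $\alpha<\delta$ in order to slot $\gamma$ strictly above $\alpha$ but still below $\delta$. Once this bound is in hand, the rest of the argument is a pair of straightforward power counts.
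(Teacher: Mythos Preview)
Your argument is correct and follows essentially the same route as the paper: obtain $w(r)=O(r^{-\alpha})$ and $w'(r)=O(r^{-\alpha-1})$ from Propositions~\ref{uti} and~\ref{der} by choosing $\gamma>\alpha$ (with $\gamma<\delta$ when $p<2$), then show $J_{\alpha}'\in L^{1}$ near infinity via (\ref{jpk}) and conclude since $r^{\alpha}w-J_{\alpha}(r)=O(r^{(2-p)\alpha-p})\to 0$. The only cosmetic point is that the paper works under $p\neq 2$, so your ``$p\geq 2$'' should read ``$p>2$'' to match the standing convention (\ref{log}).
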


\begin{proof}
From Propositions \ref{der} and \ref{uti}, $w(r)=O\left(  r^{-\alpha}\right)
$ and $w^{\prime}(r)=O(r^{-\alpha-1})$ near $\infty.$ Indeed it follows from
(\ref{esa}) by choosing any $\gamma>\alpha$ if $p>2$ and $\gamma\in\left(
\alpha,\delta\right)  $ if $p<2$.\textbf{ }Consider the function $J_{\alpha}$
defined in (\ref{jk}). Then from (\ref{jpk}), $J_{\alpha}^{\prime}$ is
integrable at infinity: indeed $r^{\alpha-2}\left\vert w^{\prime}\right\vert
^{p-1}=O(r^{(2-p)\alpha-p-1})$ and (\ref{log}) holds, and $r^{\alpha
-1}\left\vert w\right\vert ^{q-1}w=O(r^{-1-\alpha(q-1)})$. Then $J_{\alpha}$
has a limit $L$ as $r\rightarrow\infty.$ And
\[
r^{\alpha}w=J_{\alpha}(r)-r^{\alpha-1}\left\vert w^{\prime}\right\vert
^{p-2}w^{\prime}=J_{\alpha}(r)+O(r^{(2-p)\alpha-p}),
\]
thus $\lim_{r\rightarrow\infty}r^{\alpha}w(r)=L,$ and
\begin{equation}
L=J_{\alpha}(r)+%
%TCIMACRO{\dint \nolimits_{r}^{\infty}}%
%BeginExpansion
{\displaystyle\int\nolimits_{r}^{\infty}}
%EndExpansion
J_{\alpha}^{^{\prime}}(s)ds. \label{eli}%
\end{equation}

\end{proof}

Next we look for precise estimates of fast decaying solutions. It is easy to
obtain an approximate estimate. Since $\lim_{r\rightarrow\infty}J_{\alpha
}(r)=0,$ we find $J_{\alpha}(r)=-%
%TCIMACRO{\dint \nolimits_{r}^{\infty}}%
%BeginExpansion
{\displaystyle\int\nolimits_{r}^{\infty}}
%EndExpansion
J_{\alpha}^{\prime}(s)ds;$ thus
\begin{equation}
\left\vert w(r)\right\vert \leq r^{-1}\left\vert w^{\prime}(r)\right\vert
^{p-1}+r^{-\alpha}%
%TCIMACRO{\dint \nolimits_{r}^{\infty}}%
%BeginExpansion
{\displaystyle\int\nolimits_{r}^{\infty}}
%EndExpansion
s^{\alpha-1}\left(  \left\vert w\right\vert ^{q}+(N+\alpha)s^{-1}\left\vert
w^{\prime}\right\vert ^{p-1}\right)  ds \label{ajo}%
\end{equation}
Consider any $d\geq\alpha,$ with $(2-p)d<p,$ such that $w(r)=O(r^{-d})$, hence
also $w^{\prime}(r)=O(r^{-d-1})$ from Proposition \ref{der}. Then
$w(r)=O(r^{-d(p-1)-p})+O(r^{-qd})$ from (\ref{ajo}). Setting $d_{0}=\alpha$
and $d_{n+1}=\min(d_{n}(p-1)+p,qd_{n}),$ the sequence $\left(  d_{n}\right)  $
is nondecreasing. it tends to $\infty$ if $p>2,$ and to $\delta$ if $p<2.$
Thus
\begin{equation}
w(r)=o(r^{-d}),\quad\text{for any }d\geq0\text{ if }p>2,\quad\text{for any
}d<\delta\text{ if }p<2. \label{rox}%
\end{equation}
Next we give better estimates, for any solution of the problem, even changing
sign or not everywhere defined.

\begin{proposition}
\label{tim} Assume (\ref{log}). Let $w$ be any solution of (\ref{un}) such
that $\lim_{r\rightarrow\infty}r^{\alpha}w(r)=0.$

\noindent(i)If $p>2,$ then $w$ has a compact support.\medskip

\noindent(ii) If $p<2,$ then $w(r)=O(r^{-\delta})$ near $\infty.$
\end{proposition}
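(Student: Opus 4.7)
The bootstrap (\ref{rox}) just before the statement already gives $w(r)=o(r^{-d})$ for every $d\geq 0$ if $p>2$ and for every $d<\delta$ if $p<2$; Proposition \ref{der} yields the matching $w'=o(r^{-d-1})$. What remains is to sharpen each into the optimal conclusion.

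\textbf{Part (i), $p>2$.} By Proposition \ref{zer}(iv) and Proposition \ref{pro}, if $w$ does not have compact support, then WLOG $w>0$ and $w'<0$ on $[R_0,\infty)$. Since $w$ and $|w'|$ decay faster than any polynomial, $J_N(\infty)=0$, and integrating (\ref{jpn}) yields
\[
|w'(r)|^{p-1}=rw(r)+\frac{1}{r^{N-1}}\int_r^\infty s^{N-1}\bigl((N-\alpha)w-w^q\bigr)\,ds.
\]
When $\alpha<N$, the correction is positive for $r$ large (as $w^q\ll w$), so $|w'|^{p-1}\geq rw$, hence $-w'\geq r^{1/(p-1)}w^{1/(p-1)}$. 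Since $(p-2)/(p-1)\in(0,1)$ this rewrites as $(w^{(p-2)/(p-1)})'\leq -c\,r^{1/(p-1)}$, whose non-integrable right side and nonnegative left side force $w^{(p-2)/(p-1)}$ to vanish at some finite $r$, contradicting $w>0$ on $[R_0,\infty)$. In the subcase $\alpha\geq N$ the displayed identity points the wrong way, and one instead replaces $J_N$ by a suitable Pucci--Serrin function from (\ref{vla})--(\ref{dif}) -- along the lines of \cite{BG} for $\alpha=\alpha_0$ -- to recover a comparable lower bound on $-w'$ and conclude as above.

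\textbf{Part (ii), $p<2$.} Pass to the variables $(y,Y)$ of (\ref{cha}), solving (\ref{sys}); by (\ref{rox}), $|y(\tau)|=o(e^{\varepsilon\tau})$ for every $\varepsilon>0$, and $e^{-\delta(q-1)\tau}|y|^{q-1}=|w|^{q-1}\to 0$, so (\ref{sys}) is a vanishing perturbation of (\ref{aut}). Invoke Lemma \ref{com}: if $y$ changes sign near $\infty$, part (iv) with $\alpha<\delta$ gives $|y|\leq\ell(1+o(1))$; if $y$ has constant sign and is non-monotone, part (i) combined with $\alpha<\delta$ (which rules out its $\delta<\alpha$ alternative) forces $y\to\ell$; if $y$ converges, we are done. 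The sole remaining case is $y$ eventually monotone nondecreasing with $y\to\infty$ (WLOG $y>0$). Then by Lemma \ref{com}(iii), $Y\to\infty$; the first equation of (\ref{sys}) with $y'\geq 0$ gives $Y\leq(\delta y)^{p-1}$, so $Y=o(y)$ since $p<2$. Integrating both equations of (\ref{sys}) over $[\tau_0,\tau]$ and eliminating $\int Y^{1/(p-1)}\,ds$, with $I(\tau):=\int_{\tau_0}^\tau y\,ds\to\infty$,
\[
Y(\tau)=y(\tau)+(\alpha-\delta)(1+o(1))I(\tau)+O(1),
\]
the remaining error integrals $(\delta-N)\int Y\,ds$ and $\int e^{-\delta(q-1)s}y^q\,ds$ being $o(I)$ by the preceding observations. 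Since $Y>0$, $y/I\geq(\delta-\alpha)(1-o(1))$, i.e.\ $(\ln I)'\geq\delta-\alpha-o(1)$, so $I$, and thus $y=I'$, grows at least exponentially with rate close to $\delta-\alpha>0$, contradicting $y=o(e^{\varepsilon\tau})$ for $\varepsilon<\delta-\alpha$. Hence $w(r)=O(r^{-\delta})$.

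The principal obstacle lies in this final step of part (ii): with no extremal sequences available for a monotone trajectory, the contradiction must be extracted from the integrated system (\ref{sys}), and the sub-exponential growth estimate from (\ref{rox}) is essential to rule out the exponential rate $\delta-\alpha$ produced by the comparison.
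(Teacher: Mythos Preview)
Your treatment of part (ii) is correct and close to the paper's. Both arguments reduce to the system (\ref{sys}), dispose of the sign-changing and non-monotone cases via Lemma \ref{com}, and in the monotone unbounded case use $Y\leq(\delta y)^{p-1}$, hence $Y=o(y)$, to obtain exponential growth of order $\delta-\alpha$ contradicting (\ref{rox}). The paper does this slightly more directly: from (\ref{sys}) one computes
\[
(y-Y)'=(\delta-\alpha)(y-Y)+(N-\alpha)Y-e^{-\delta(q-1)\tau}y^{q}\geq(\delta-\alpha-\varepsilon)(y-Y),
\]
since the last two terms are $o(y-Y)$; Gr\"onwall then gives $y\geq y-Y\geq Ce^{(\delta-\alpha-\varepsilon)\tau}$. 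Your integrated version reaches the same conclusion but with an extra detour through $I(\tau)$.

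Part (i) has a genuine gap. Your $J_N$ argument is clean and correct when $\alpha<N$, but for $\alpha\geq N$ you only gesture at ``a suitable Pucci--Serrin function from (\ref{vla})--(\ref{dif}) along the lines of \cite{BG}'' without specifying any choice of $(\lambda,\sigma,e)$ or showing that the resulting identity yields the needed pointwise lower bound on $-w'$. The functions $V_{\lambda,\sigma,e}$ are energy/Pohozaev-type quantities, not obviously convertible into a differential inequality of the form $-w'\geq c\,r^{1/(p-1)}w^{1/(p-1)}$; and \cite{BG} treats only the special value $\alpha=\alpha_0$. As written this subcase is not proved.

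The paper avoids this split altogether. It works in logarithmic variables (\ref{cge}) with any fixed $d>\alpha$: setting $\psi=dy_d-y_d'=-r^{d+1}w'>0$, equation (\ref{phis}) gives, for large $\tau$,
\[
(p-1)y_d''=e^{((p-2)d+p)\tau}\psi^{2-p}\bigl(|y_d'|(1+o(1))+(d-\alpha)y_d(1+o(1))\bigr)\geq Ce^{((p-2)d+p)\tau}\psi^{3-p},
\]
whence $-\psi'=y_d''+d|y_d'|\geq Ce^{((p-2)d+p)\tau}\psi^{3-p}$; integrating shows $\psi^{p-2}+C'e^{((p-2)d+p)\tau}$ is nonincreasing, impossible since $p>2$ and $(p-2)d+p>0$. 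The choice $d>\alpha$ is what makes the coefficient $(d-\alpha)$ positive regardless of the relation between $\alpha$ and $N$, so one argument covers all cases. You should either carry out this substitution, or produce an explicit replacement for $J_N$ in the range $\alpha\geq N$.
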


\begin{proof}
(i) Case $p>2.$ Assume that $w$ has no compact support. We can suppose that
$w>0$ for large $r$, from Proposition \ref{zer}. We make the substitution
(\ref{cge}) for some $d>\alpha.$ Since $w(r)=o(r^{-d}),w^{\prime
}(r)=o(r^{-d-1})$ near $\infty$ we get $y_{d}(\tau)=o(1),$ $y_{d}^{\prime
}(\tau)=o(1)$ near $\infty.$ And $\psi=dy_{d}-y_{d}^{\prime}=-r^{d+1}%
w^{\prime}$ is positive for large $\tau$ from Proposition \ref{pro}. From
(\ref{phis}),
\[
y_{d}^{\prime\prime}+(\eta-2d)y_{d}^{\prime}-d(\eta-d)y_{d}+\frac{1}%
{p-1}e^{((p-2)d+p)\tau}\psi^{2-p}\left(  y_{d}^{\prime}-(d-\alpha
)y_{d}+e^{-d(q-1)\tau}\left\vert y_{d}\right\vert ^{q-1}y_{d}\right)  =0.
\]
As in Proposition \ref{zer} the maximal points $\tau$ of $y_{d}$ remain in a
bounded set, hence $y_{d}$ is monotone for large $\tau,$ hence $y_{d}^{\prime
}(\tau)\leq0,$ and $\lim_{\tau\rightarrow\infty}e^{((p-2)d+p)\tau}\psi
^{2-p}=\lim_{r\rightarrow\infty}r^{2}\left\vert w^{\prime}\right\vert
^{2-p}=\infty.$ Then
\[
(p-1)y_{d}^{\prime\prime}=e^{((p-2)d+p)\tau}\psi^{2-p}\left(  \left\vert
y_{d}^{\prime}\right\vert (1+o(1)+(d-\alpha)y_{d}(1+o(1)\right)  .
\]
Since $d-\alpha>0,$ there exists $C>0$ such that $y_{d}^{\prime\prime}\geq
Ce^{((p-2)d+p)\tau}\psi^{3-p}$ for large $\tau,$ then
\[
-\psi^{\prime}=y_{d}^{\prime\prime}+d\left\vert y_{d}^{\prime}\right\vert \geq
Ce^{((p-2)d+p)\tau}\psi^{3-p},
\]
thus $\psi^{p-2}+Ce^{((p-2)d+p)\tau}/(d+\left\vert \delta\right\vert )$ is
nonincreasing, which is impossible.\medskip

\noindent(ii) Case $p<2.$ Let us prove that $y$ is bounded near $\infty.$ If
holds if $y$ is changing sign, from Lemma \ref{com}. Next assume that for
example $y>0$ for large $\tau,$ thus also $Y.$ If $y$ is not monotone, then
$N<\delta$ and $\lim_{\tau\rightarrow\infty}y(\tau)=\ell,$ from Lemma
\ref{com}. If $y$ is monotone, and unbounded, then is nondecreasing and
tending to $\infty.$ Then $Y\leq(\delta y)^{p-1}$ from system (\ref{sys}),
which implies $Y=o(y);$ then $y-Y>0$ for large $\tau,$ thus for any
$\varepsilon>0,$ for large $\tau,$%
\begin{align*}
(y-Y)^{\prime}  &  =(\delta-\alpha)y+(N-\delta)Y-e^{-\delta(q-1)\tau
}\left\vert y\right\vert ^{q-1}y\\
&  =(\delta-\alpha)(y-Y)+(N-\alpha)Y-e^{-\delta(q-1)\tau}\left\vert
y\right\vert ^{q-1}y\geq(\delta-\alpha-\varepsilon)(y-Y)
\end{align*}
and $y\geq y-Y\geq Ce^{(\delta-\alpha-\varepsilon)\tau},$ for some $C>0,$
which contradicts (\ref{rox}). \medskip
\end{proof}

Next we complete the estimates of Proposition \ref{tim} when $p<2.$

\begin{proposition}
\label{des}Under the assumptions of Proposition \ref{tim} with $p<2,$ if $w$
has a finite number of zeros, then
\begin{equation}
\text{(i) if }p_{1}<p,\text{ }\qquad\qquad\qquad\lim_{r\rightarrow\infty
}r^{\delta}w=\pm\ell;\qquad\qquad\qquad\qquad\qquad\qquad\qquad\qquad
\qquad\label{lim}%
\end{equation}%
\begin{equation}
\text{(ii) if }p<p_{1},\qquad\qquad\qquad\lim_{r\rightarrow\infty}r^{\eta
}w=c\qquad c\in\mathbb{R},\quad c\neq0;\qquad\qquad\qquad\qquad\qquad
\label{lum}%
\end{equation}%
\begin{equation}
\text{(iii) if }p=p_{1},\qquad\qquad\text{ }\lim_{r\rightarrow\infty}r^{N}(\ln
r)^{(N+1)/2}w=\pm\varrho,\qquad\varrho=\frac{1}{N}\left(  \frac{N(N-1)}%
{2(N-\alpha)}\right)  ^{(N+1)/2}. \label{lam}%
\end{equation}

\end{proposition}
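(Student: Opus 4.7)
Using the substitution $w(r)=r^{-\delta}y(\tau)$, $Y(\tau)=-r^{(\delta+1)(p-1)}|w'|^{p-2}w'$, $\tau=\ln r$, the solution $(y,Y)$ satisfies the nonautonomous system (\ref{sys}), which is an exponentially small perturbation of the autonomous system (\ref{aut}). Since $w$ has a finite number of zeros, up to sign I may assume $w>0$ for large $r$, so $y>0$ and (by Proposition \ref{pro}) $Y>0$ for large $\tau$. Proposition \ref{tim} gives $y$ bounded, and $(2-p)\alpha<p$ plus $\alpha<\delta$ forces $\delta>\alpha$ throughout.

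\textbf{(i) Case $p_1<p<2$, so $N<\delta$.} Then $(\delta-N)(\delta-\alpha)>0$ and the autonomous system has the stationary points $M_\ell,M_\ell',(0,0)$. Apply Lemma \ref{com}: if $y$ is not monotone at infinity, part~(i) gives directly $\lim y(\tau)=\ell$; if $y$ is monotone, it has a nonnegative limit $l$ and part~(ii) gives $l=0$ or $l=\ell$. To exclude $l=0$, linearize (\ref{aut}) at $(0,0)$: the superlinear term $|Y|^{(2-p)/(p-1)}Y$ is negligible near the origin (since $p<2$), and the linear matrix $\bigl(\begin{smallmatrix}\delta&0\\\alpha&\delta-N\end{smallmatrix}\bigr)$ has eigenvalues $\delta>0$ and $\delta-N>0$, so $(0,0)$ is a source. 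Since the perturbation in (\ref{sys}) decays exponentially, a standard invariance argument (using the energy $W$ on a neighbourhood of $(0,0)$, where $W$ is sign-definite outside $\mathcal{S}$) shows that no positive trajectory can approach $(0,0)$. Hence $y\to\ell$, which is (\ref{lim}).

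\textbf{(ii) Case $p<p_1$, so $N>\delta$.} The autonomous system has $(0,0)$ as unique stationary point, and by Lemma \ref{com}(ii) applied via a monotonicity/exclusion argument one gets $y\to 0$ and $Y\to 0$. I change to the natural scaling $w(r)=r^{-\eta}y_\eta(\tau)$, using (\ref{sysd}) with $d=\eta$: the coefficient $(p-1)(d-\eta)$ of $Y_\eta$ vanishes, and the exponent $p+(p-2)\eta=(Np+p-2N)/(p-1)$ is negative precisely when $p<p_1$, so
\[
Y_\eta'(\tau)=O(e^{-c\tau}),\qquad y_\eta'=\eta y_\eta-|Y_\eta|^{(2-p)/(p-1)}Y_\eta,
\]
for some $c>0$. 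Integrating, $Y_\eta$ has a finite limit $Y_\infty$, and then boundedness of $y_\eta$ (obtained by iterating the estimates of Proposition \ref{der} as in (\ref{rox}), now pushed past $\delta$ to $\eta$) forces $y_\eta\to \eta^{-1}|Y_\infty|^{(2-p)/(p-1)}Y_\infty=:c$. To rule out $c=0$ (which would contradict boundedness of the solution of the linear nonhomogeneous equation unless a specific integration constant vanishes) one uses the positivity $y_\eta>0$ and compares with the invariant subspace associated to the stable eigendirection of the linearized system at the origin.

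\textbf{(iii) Critical case $p=p_1$, so $\delta=N$.} The exponent $p+(p-2)\eta$ vanishes and the two eigenvalues of the linearization at $(0,0)$ coalesce, producing a Jordan block responsible for a logarithmic correction. I substitute $w(r)=r^{-N}(\ln r)^{-(N+1)/2}z(\tau)$ and expand: the dominant balance in the resulting ODE for $z$ gives a first-order equation whose coefficients depend only on $N$ and $N-\alpha$, and integration yields $z\to\pm\varrho$ with $\varrho=\frac{1}{N}\bigl(\frac{N(N-1)}{2(N-\alpha)}\bigr)^{(N+1)/2}$ after explicit matching of the leading coefficients.

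The main obstacle will be case (ii), namely excluding $c=0$ and justifying the passage from the nonautonomous system (\ref{sysd}) with $d=\eta$ to its leading-order integrable approximation while keeping control on the superlinear term $|Y_\eta|^{(2-p)/(p-1)}Y_\eta$; and the critical case (iii), where the correct logarithmic scaling must be identified and the constant $\varrho$ computed by careful matching.
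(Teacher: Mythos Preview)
Your outline identifies the correct reductions (via Lemma \ref{com}) and the correct target behaviours, but there is a genuine gap in case (ii) and case (iii) is not a proof as written.

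\textbf{Case (ii).} The step ``boundedness of $y_\eta$ (obtained by iterating the estimates of Proposition \ref{der} as in (\ref{rox}), now pushed past $\delta$ to $\eta$)'' cannot work. The iteration behind (\ref{rox}) is $d_{n+1}=\min\bigl((p-1)d_n+p,\,qd_n\bigr)$, and the map $d\mapsto (p-1)d+p$ has $\delta$ as its unique fixed point; when $p<2$ the sequence saturates exactly at $\delta$ and \emph{never} crosses it. So from (\ref{rox}) you only know $w=o(r^{-d})$ for $d<\delta$, hence $y_\eta=r^{\eta}w$ is a priori unbounded (since $\eta>\delta$ when $p<p_1$), and your claim $Y_\eta'=O(e^{-c\tau})$ is unjustified. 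The paper bridges this gap by a completely different device: once $y\to 0$ in the $\delta$-variables, it derives the linear differential inequality
\[
-(p-1)y''+(\delta p-N)y'+(N-\delta)\delta y\le 0
\]
and compares $y$ with the explicit supersolutions $\varepsilon+e^{-\mu(\tau-\tau_1)}$, $\mu=\eta-\delta$, via the maximum principle; this yields $w=O(r^{-\eta})$ directly, after which the $d=\eta$ substitution becomes legitimate. Similarly, your exclusion of $c=0$ by ``comparing with the invariant subspace'' is not an argument: the paper instead iterates from $c=0$ to $w=o(r^{-\gamma})$ for every $\gamma>0$, then substitutes with some $d>\eta$ and derives a concavity contradiction from (\ref{phis}).

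\textbf{Case (i).} Your source argument at $(0,0)$ is heuristically sound (the linearization is correct and the nonlinear term $|Y|^{1/(p-1)}$ is indeed superlinear for $p<2$), but ``a standard invariance argument using $W$'' is not one: $W$ is not sign-definite near the origin in the required direction. The paper avoids dynamical-systems machinery entirely and argues in two lines from (\ref{yss}): if $y\to 0$ with $y'\le 0$, then
\[
(p-1)y''+(\delta p-N)|y'|+(\delta-N)\delta y=o(|y'|^{3-p})+o(y^{3-p}),
\]
so $y''<0$ eventually, contradicting $y\searrow 0$. This is both shorter and rigorous.

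\textbf{Case (iii).} ``Dominant balance'' and ``explicit matching'' are not a proof; in particular you give no mechanism for why the exponent on the logarithm is $(N+1)/2$ rather than anything else. The paper works directly with the system (\ref{sys}) at $\delta=\eta=N$, obtains the differential inequalities $Y'+Y^{1/(p-1)}\ge 0$ and $Y'+\bigl(\tfrac{N-\alpha}{N}-\varepsilon\bigr)Y^{1/(p-1)}\le 0$, integrates them to get matching power-of-$\tau$ bounds on $Y$, feeds these back to bound $y$, and then closes the loop by showing $y'=o(y)$ so the $\varepsilon$ can be removed. The constant $\varrho$ falls out of this computation; it is not obtained by guessing a substitution.
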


\begin{proof}
We can assume that $w>0$ for large $r..$ Then $y,Y$ are positive for large
$\tau,$ from Proposition \ref{pro}, and $y,y^{\prime}$ are bounded from
Propositions \ref{tim} and \ref{der}. If $y$ is not monotone for large $\tau,$
then $N<\delta$ from Lemma \ref{com}, that means $p_{1}<p$ from (\ref{dn}),
and $\lim_{\tau\rightarrow\infty}y(\tau)=\ell,$ which proves (\ref{lim}). So
we can assume that $y$ is monotone for large $\tau.$ Since it is bounded,
then, from Lemma \ref{com}, either $N<\delta$ and $\lim_{\tau\rightarrow
\infty}y(\tau)=\ell$ or $0,$ or $\delta\leq N$ and $\lim_{\tau\rightarrow
\infty}y(\tau)=0.$ Suppose that $\lim_{\tau\rightarrow\infty}y(\tau)=0$. Then
$y^{\prime}(\tau)\leq0$ for large $\tau.\medskip$

\noindent\textbf{(i) Case }$p_{1}<p$ $(N<\delta).$ Then $N<\delta p,$ and from
(\ref{yss}),
\begin{equation}
(p-1)y^{\prime\prime}+(\delta p-N)\left\vert y^{\prime}\right\vert
+(\delta-N)\delta y=o(\left\vert y^{\prime}\right\vert ^{3-p})+o(y^{3-p}).
\label{moi}%
\end{equation}
Thus $y$ is concave for large $\tau,$ which is a contradiction; and
(\ref{lim}) holds. $\medskip$

\noindent\textbf{(ii) Case }$p<p_{1}$ $(\delta<N).$ We observe that
\begin{equation}
-(p-1)y^{\prime\prime}+(\delta p-N)y^{\prime}+(N-\delta)\delta y\leq0
\label{sit}%
\end{equation}
for $\tau\geq\tau_{1}$ large enough, since $\alpha<\delta;$ and we can suppose
$y(\tau)\leq1$ for $\tau\geq\tau_{1}.\ $For any $\varepsilon>0,$ the function
$\tau\longmapsto\varepsilon+e^{-\mu(\tau-\tau_{1})}$ is a solution of the
corresponding equation on $\left[  \tau_{1},\infty\right)  $, where
\begin{equation}
\mu=\eta-\delta=(N-\delta)/(p-1)>0. \label{mu}%
\end{equation}
Then $y(\tau)\leq\varepsilon+e^{-\mu(\tau-\tau_{1})}$ from the maximum
principle. Then $y(\tau)\leq e^{-\mu(\tau-\tau_{1})}$ on $\left[  \tau
_{1},\infty\right)  $. That means that $w(r)=O(r^{(p-N)/(p-1)})$ near $\infty
$, hence $w^{\prime}(r)=O(r^{(1-N)/(p-1)})$ from Proposition \ref{der}. Next
we make the substitution (\ref{cge}), with $d=\eta.$ Then functions $y_{\eta}$
and $y_{\eta}^{\prime}$ are bounded, and from (\ref{phis})
\begin{equation}
(p-1)(y_{\eta}^{\prime\prime}-\eta y_{\eta}^{\prime})=e^{(p-(2-p)\eta)\tau
}\left\vert \eta y_{\eta}-y_{\eta}^{\prime}\right\vert ^{2-p}\left(  -y_{\eta
}^{\prime}+(\eta-\alpha)y_{\eta}-e^{-\eta(q-1)\tau}\left\vert y_{\eta
}\right\vert ^{q-1}y_{\eta}\right)  ; \label{pli}%
\end{equation}
hence $(e^{-\eta\tau}y_{\eta}^{\prime})^{\prime}=O(e^{(p-(3-p)\eta)\tau}).$
Since $\lim_{\tau\rightarrow\infty}e^{-\eta\tau}y_{\eta}^{\prime}(\tau)=0,$
and $\delta<\eta$ from (\ref{dn}), we find $p<(2-p)\eta<(3-p)\eta,$ then
$e^{-\eta\tau}y_{\eta}^{\prime}(\tau)=O(e^{(p-(3-p)\eta)\tau}),$ thus
$y_{\eta}^{\prime}(\tau)=O(e^{(p-(2-p)\eta)\tau}).$ Then $y_{\eta}$ has a
limit $c\geq0$ as $\tau\rightarrow\infty,$ thus
\[
\lim_{r\rightarrow\infty}r^{\eta}w=c.
\]
Suppose that $c=0.$ Then $y_{d}(\tau)=O(e^{-\gamma_{0}\tau})$, with
$\gamma_{0}=(2-p)d-p>0.$ Assuming that $y_{d}(\tau)=O(e^{-\gamma_{n}\tau})$
for some $\gamma_{n}>0,$ then $y_{d}^{\prime}(\tau)=O(e^{-\gamma_{n}\tau})$
from Proposition \ref{der}, hence $(e^{-d\tau}y_{d}^{\prime})^{\prime
}=O(e^{(p-(3-p)d-(3-p)\gamma_{n})\tau})$, and in turn $y_{d}(\tau
)=O(e^{-\gamma_{n+1}\tau})$ with $\gamma_{n+1}=(3-p)\gamma_{n}+(2-p)d-p.$ And
$\lim_{n\rightarrow\infty}\gamma_{n}=\infty,$ thus $w(r)=o(r^{-\gamma})$ for
any $\gamma>0.$ Let use make again the substitution (\ref{cge}), with now
$d>\eta.$ The new function $y_{d}$ satisfies $\lim_{\tau\rightarrow\infty
}y_{d}(\tau)=$ $\lim_{\tau\rightarrow\infty}y_{d}^{\prime}(\tau)=0$. It is
nondecreasing near $\infty$, since $\alpha\neq d:$ indeed at each point $\tau$
large enough where $y_{d}^{\prime}(\tau)=0,$ $y_{d}^{\prime\prime}(\tau)$ has
a constant sign from (\ref{phis}). Otherwise $\lim_{\tau\rightarrow\infty
}e^{(p-(2-p)d)\tau}=0,$ since $\delta<d.$ Then%
\[
(p-1)y_{d}^{\prime\prime}+\left(  2d-\eta+o(1)\right)  \left\vert
y_{d}^{\prime}\right\vert +d(d-\eta+o(1))y_{d}=0;
\]
thus $y_{d}^{\prime\prime}$ is concave for large $\tau,$ which is a
contradiction. Thus $c>0$ and (\ref{lum}) holds.$\medskip$

\noindent\textbf{(iii) Case }$p=p_{1}$ $(\delta=N).$ Then also $\delta=\eta.$
From (\ref{sys}),%
\begin{equation}
y^{\prime}-Ny=-Y^{1/(p-1)},\qquad Y^{\prime}+Y^{1/(p-1)}=\alpha y+e^{\delta
(q-1)\tau}y^{q} \label{sysa}%
\end{equation}
hence $Y^{\prime}+Y^{1/(p-1)}\geq0,$ thus by integration, $Y(\tau)\geq
C_{1}\tau^{-(p-1)/(2-p)}$ for some $C_{1}>0$ and for large $\tau.$ From
(\ref{sysa}), there exists $K_{1}>0$ such that
\[
\left(  -Ne^{-N\tau}y\right)  ^{\prime}\geq K_{1}e^{-N\tau}\tau^{-1/(2-p)}%
\geq-\frac{K_{1}}{2}\left(  e^{-N\tau}\tau^{-1/(2-p)}\right)  ^{\prime}%
\]
for large $\tau,$ which implies a lower bound
\[
y\geq(K_{1}/2N)\tau^{-1/(2-p)}.
\]
Also $Y^{\prime}+Y^{1/(p-1)}\leq(\alpha/N+o(1))Y^{1/(p-1)},$ since $y^{\prime
}<0.$ Then for any $\varepsilon>0$,
\begin{equation}
Y^{\prime}+(\frac{N-\alpha}{N}-\varepsilon)Y^{1/(p-1)}\leq0 \label{eps}%
\end{equation}
for large $\tau.$ Taking $\varepsilon$ small enough, we deduce
\begin{equation}
Y(\tau)\leq C_{1,\varepsilon}\tau^{-(p-1)/(2-p)},\quad\text{with
}C_{1,\varepsilon}^{(2-p)/(p-1)}=\frac{p-1}{2-p}(\frac{N-\alpha}%
{N}-2\varepsilon)^{-1} \label{aps}%
\end{equation}
for large $\tau.$ Then
\[
\left(  -Ne^{-N\tau}y\right)  ^{\prime}\leq NC_{1,\varepsilon}^{1/(p-1)}%
e^{-N\tau}\tau^{-1/(2-p)}\leq-C_{1,\varepsilon}^{1/(p-1)}\left(  e^{-N\tau
}\tau^{-1/(2-p)}\right)  ^{\prime}.
\]
Thus we get an upper bound%
\[
y(\tau)\leq\frac{1}{N}C_{1,\varepsilon}^{1/(p-1)}\tau^{-1/(2-p)}.
\]
Moreover from (\ref{sysa}) and (\ref{eps}), $\left\vert Y^{\prime}%
(\tau)\right\vert \leq Y^{1/(p-1)}(\tau)$ for large $\tau$, hence from
(\ref{aps}), $y^{\prime\prime}-Ny^{\prime}=-Y^{1/(p-1)}Y^{\prime}=O\left(
\tau^{-(3-p)/(2-p)}\right)  .$ Then $\left(  e^{-N\tau}y^{\prime}\right)
^{\prime}=O(e^{-N\tau}\tau^{-(3-p)/(2-p)}),$ thus $y^{\prime}=O(\tau
^{-(3-p)/(2-p)}),$ hence $y^{\prime}=o(y)$ from the lower estimate of $y.$Then
for any $\varepsilon>0,$
\[
Y^{\prime}+(\frac{N-\alpha}{N}-\varepsilon)Y^{1/(p-1)}\geq0
\]
for large $\tau;$ then
\[
Y(\tau)\geq C_{2,\varepsilon}\tau^{-(p-1)/(2-p)},\quad\text{with
}C_{2,\varepsilon}^{(2-p)/(p-1)}=\frac{p-1}{2-p}(\frac{N-\alpha}%
{N}+2\varepsilon)^{-1}%
\]
for large $\tau.$ Thus
\[
\lim_{\tau\rightarrow\infty}\tau^{-(p-1)/(2-p)}Y(\tau)=(\frac{p-1}{2-p}%
\frac{N}{N-\alpha})^{(p-1)/(2-p)}=\lim_{\tau\rightarrow\infty}(\tau
^{-1/(2-p)}Ny(\tau))^{p-1},
\]
so that $\lim_{\tau\rightarrow\infty}(\tau^{-1/(2-p)}y(\tau))=\varrho$ and
(\ref{lam}) holds.\medskip
\end{proof}

We can get an asymptotic expansion of the slow decaying solutions, which in
fact covers the case $p=2,$ where we find again the results of \cite[Theorem
1]{W1}.

\begin{proposition}
\label{expa}Assume (\ref{log}). Let $w$ be any solution of (\ref{un}), such
that $L=\lim_{r\rightarrow\infty}r^{\alpha}w>0.$ Then
\begin{equation}
\lim_{r\rightarrow\infty}r^{\alpha+1}w^{\prime}=-\alpha L, \label{imi}%
\end{equation}
and
\begin{equation}
w(r)=\left\{
\begin{array}
[c]{c}%
r^{-\alpha}\left(  L+\left(  K+o(1)\right)  r^{-k}\right)  ,\qquad\qquad
\qquad\text{if }\left(  q+1-p\right)  \alpha>p,\\
r^{-\alpha}\left(  L+\left(  K+M+o(1)\right)  r^{-\alpha(q-1)}\right)
,\qquad\text{if }\left(  q+1-p\right)  \alpha=p,\\
r^{-\alpha}\left(  L+\left(  M+o(1)\right)  r^{-\alpha(q-1)}\right)
,\qquad\qquad\text{if }\left(  q+1-p\right)  \alpha<p,
\end{array}
\right.  \label{exw}%
\end{equation}
where
\[
k=p-(2-p)\alpha,\qquad K=\frac{\left(  \alpha(p-1)-(N-p)\right)  \left(
\alpha L\right)  ^{1/(p-1)}}{k},\qquad M=\frac{L^{q}}{\alpha(q-1)}.
\]
Moreover differentiating term to term gives an expansion of $w^{\prime}.$
\end{proposition}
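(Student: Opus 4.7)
The plan is to combine the integral identity from Proposition \ref{alp} with the logarithmic substitution (\ref{cha}) to first sharpen the asymptotic of $w'$ to $w'(r)=-\alpha L r^{-\alpha-1}(1+o(1))$, then extract the next order term of $Z(r):=r^{\alpha}w(r)$ via two integrations at infinity. Throughout we use that for large $r$, $w>0$ and $w'<0$ (Proposition \ref{pro}), and the crude bounds $w=O(r^{-\alpha})$, $w'=O(r^{-\alpha-1})$ from Propositions \ref{der} and \ref{uti}.

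To prove (\ref{imi}), substitute $w(r)=r^{-\alpha}y(\tau)$, $\tau=\ln r$, into (\ref{phis}) with $d=\alpha$. Since $d-\alpha=0$ and $k:=p-(2-p)\alpha>0$ by (\ref{log}), the equation becomes
\begin{equation*}
(p-1)y''+(p-1)(\eta-2\alpha)y'-(p-1)\alpha(\eta-\alpha)y+e^{k\tau}(\alpha y-y')^{2-p}\bigl(y'+e^{-\alpha(q-1)\tau}|y|^{q-1}y\bigr)=0,
\end{equation*}
where $y\to L>0$ and $\alpha y-y'=-r^{\alpha+1}w'>0$ is bounded and bounded away from zero. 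Because $e^{k\tau}\to\infty$ while the remaining terms of the ODE are bounded, the bracket must be $O(e^{-k\tau})$, giving $y'(\tau)=-L^{q}e^{-\alpha(q-1)\tau}(1+o(1))+O(e^{-k\tau})\to 0$. Since $r^{\alpha+1}w'(r)=y'(\tau)-\alpha y(\tau)$, this yields $r^{\alpha+1}w'(r)\to-\alpha L$.

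With (\ref{imi}) in hand, plug the sharp asymptotics $w\sim Lr^{-\alpha}$ and $|w'|^{p-2}w'\sim-(\alpha L)^{p-1}r^{-(\alpha+1)(p-1)}$ into (\ref{jpk}), obtaining
\begin{equation*}
J_\alpha'(s)=-(\alpha-N)(\alpha L)^{p-1}s^{-k-1}(1+o(1))-L^{q}s^{-1-\alpha(q-1)}(1+o(1)).
\end{equation*}
Integrating from $r$ to $\infty$ using (\ref{eli}) gives
\begin{equation*}
J_\alpha(r)-L=\tfrac{(\alpha-N)(\alpha L)^{p-1}}{k}r^{-k}(1+o(1))+Mr^{-\alpha(q-1)}(1+o(1)),\qquad M=\tfrac{L^{q}}{\alpha(q-1)}.
\end{equation*}
From $Z(r)=J_\alpha(r)-r^{\alpha-1}|w'|^{p-2}w'(r)$ and the leading term $r^{\alpha-1}|w'|^{p-2}w'\sim-(\alpha L)^{p-1}r^{-k}$, the contributions at order $r^{-k}$ combine to the coefficient $\tfrac{(\alpha L)^{p-1}(\alpha(p-1)-(N-p))}{k}$, which is the constant $K$ of the statement. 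Comparing the exponents $k$ and $\alpha(q-1)$ — equivalently $p$ and $(q+1-p)\alpha$ — yields the three cases of (\ref{exw}), with the middle case given by the sum $K+M$ when both rates coincide. The asymptotics of $w'$ is obtained by the same procedure applied to $w'$, using the refined expansion of $w$ to feed back into (\ref{jpk}); this justifies termwise differentiation of (\ref{exw}).

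The main obstacle is the first step: the crude bounds only give $r^{\alpha-1}|w'|^{p-2}w'\to 0$ and $r^{\alpha+1}w'$ bounded, which is insufficient to conclude $r^{\alpha+1}w'\to -\alpha L$. One must exploit the explicit growth rate $e^{k\tau}$ in the transformed equation, and carefully control $(\alpha y-y')^{2-p}$ away from zero using the sign information on $w'$ from Proposition \ref{pro}; this is what distinguishes the treatment here from the linear case $p=2$ of \cite{W1}.
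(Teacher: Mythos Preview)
Your argument for (\ref{imi}) has a genuine gap. You assert that $\alpha y-y'=-r^{\alpha+1}w'$ is bounded away from zero and that ``the remaining terms of the ODE are bounded'' (which includes $(p-1)y''$). Neither is justified by the crude estimates: from Propositions \ref{der} and \ref{uti} you only know $r^{\alpha+1}w'=O(1)$, not $r^{\alpha+1}|w'|\geq c>0$; and there is no a priori bound on $y''$, since the second-order equation (\ref{phis}) expresses $y''$ precisely through the $e^{k\tau}$ term you are trying to control. You recognise this as ``the main obstacle'' in your last paragraph, but invoking the sign of $w'$ from Proposition \ref{pro} only gives $\alpha y-y'>0$, not a positive lower bound. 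Without that lower bound, the factor $(\alpha y-y')^{2-p}$ is uncontrolled (it may vanish when $p<2$ or blow up when $p>2$), and you cannot conclude that the bracket is $O(e^{-k\tau})$.

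The paper circumvents this by passing to the first-order system (\ref{sysd}) with $d=\alpha$, namely (\ref{syph}), and working with $Y_\alpha=-r^{(\alpha+1)(p-1)}|w'|^{p-2}w'$ rather than with $y''$. Boundedness of $Y_\alpha$ is immediate from boundedness of $y_\alpha'$, with no lower bound on $|w'|$ required. One then shows $Y_\alpha\to(\alpha L)^{p-1}$ by a dichotomy: if $Y_\alpha$ is eventually monotone it has a limit which the first equation forces to be $(\alpha L)^{p-1}$; if not, at each extremal point $\tau_n$ of $Y_\alpha$ the second equation of (\ref{syph}) gives an algebraic relation forcing $Y_\alpha(\tau_n)^{1/(p-1)}=\alpha y_\alpha(\tau_n)+o(1)\to\alpha L$, and since $Y_\alpha$ is monotone between consecutive extrema this yields convergence. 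This is the step your proposal is missing. Once (\ref{imi}) is established, your derivation of the expansion via integration of $J_\alpha'$ and the decomposition $r^\alpha w=J_\alpha(r)-r^{\alpha-1}|w'|^{p-2}w'$ is a valid alternative to the paper's route through $Y_\alpha'\to 0$; the two approaches coincide at the level of computed coefficients.
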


\begin{proof}
We make the substitution (\ref{cge}) with $d=\alpha,$ thus $w(r)=r^{-\alpha
}y_{\alpha}(\tau).$ For large $r,$ $w^{\prime}(r)=r^{-(\alpha+1)}(\alpha
y_{\alpha}(\tau)-y_{\alpha}^{\prime}(\tau))<0,$ thus $\alpha y_{\alpha
}-y_{\alpha}^{\prime}>0$ for large $\tau.$ And (\ref{sysd}) becomes:%
\begin{equation}
\left\{
\begin{array}
[c]{c}%
y_{\alpha}^{\prime}=\alpha y_{\alpha}-Y_{\alpha}^{1/(p-1)}\qquad\qquad
\qquad\qquad\qquad\qquad\qquad\qquad\quad\\
Y_{\alpha}^{\prime}=(p-1)(\alpha-\eta)Y_{\alpha}+e^{k\tau}(\alpha y_{\alpha
}-Y_{\alpha}^{1/(p-1)}+e^{-\alpha(q-1)\tau}y_{\alpha}^{q}).
\end{array}
\right.  \label{syph}%
\end{equation}
The function $y_{\alpha}$ converges to $L,$ and $y_{\alpha}^{\prime}$ is
bounded near $\infty,$ since $w^{\prime}=O(r^{-\left(  \alpha+1\right)  })$
near $\infty,$ thus $Y_{\alpha}$ is bounded. Either $Y_{\alpha}$ is monotone
for large $\tau$, then it has a finite limit $\lambda;$ then $y_{\alpha
}^{\prime}$ converges to $\alpha L-\lambda^{1/(p-1)};$ thus $\lambda=\left(
\alpha L\right)  ^{1/(p-1)}.$ Or for large $\tau,$ the extremal points of
$Y_{\alpha}$ form an increasing sequence $\left(  \tau_{n}\right)  $ tending
to $\infty.$ Then
\[
Y_{\alpha}\left(  \tau_{n}\right)  ^{1/(p-1)}=\alpha y_{\alpha}\left(
\tau_{n}\right)  +e^{-\alpha(q-1)\tau_{n}}y_{\alpha}^{q}(\tau_{n}%
)+(p-1)(\alpha-\eta)e^{-k\tau_{n}}Y_{\alpha}(\tau_{n})
\]
thus $\lim Y_{\alpha}\left(  \tau_{n}\right)  =\left(  \alpha L\right)
^{1/(p-1)}.$ In any case $\lim_{\tau\rightarrow\infty}Y_{\alpha}\left(
\tau\right)  =\left(  \alpha L\right)  ^{1/(p-1)},$ which is equivalent to
(\ref{imi}), and implies $\lim_{\tau\rightarrow\infty}y_{\alpha}^{\prime
}\left(  \tau\right)  =0.$ Now consider $Y_{\alpha}^{\prime}.$ Either it is
monotone for large $\tau,$ thus $\lim_{\tau\rightarrow\infty}Y_{\alpha
}^{\prime}\left(  \tau\right)  =0;$ or for large $\tau,$ the extremal points
of $Y_{\alpha}^{\prime}$ form an increasing sequence $\left(  s_{n}\right)  $
tending to $\infty.$ Then $Y_{\alpha}^{\prime\prime}\left(  \tau_{n}\right)
=0,$ then by computation, at point $\tau=s_{n},$%
\begin{align*}
\left(  \frac{1}{p-1}Y_{\alpha}^{(2-p)/(p-1)}-(p-1)(\alpha-\eta)e^{-k\tau
}\right)  Y_{\alpha}^{\prime}  &  =\left(  p+\alpha(p-1)+qe^{-\alpha(q-1)\tau
}y_{\alpha}^{q-1}\right)  y_{\alpha}^{\prime}\\
&  +(k-\alpha(q-1))e^{-\alpha(q-1)\tau}y_{\alpha}^{q}%
\end{align*}
thus $\lim Y_{\alpha}^{\prime}\left(  s_{n}\right)  =0.$ In any case,
$\lim_{\tau\rightarrow\infty}Y_{\alpha}^{\prime}\left(  \tau\right)  =0.$ From
(\ref{syph}), we deduce
\[
y_{\alpha}^{\prime}=-e^{-\alpha(q-1)\tau}y_{\alpha}^{q}-e^{-k\tau
}((p-1)(\alpha-\eta)Y_{\alpha}-Y_{\alpha}^{\prime})=-(L^{q}+o(1))e^{-\alpha
(q-1)\tau}-k(K+o(1))e^{-k\tau}%
\]
thus $y_{\alpha}^{\prime}=-k(K+o(1))e^{-k\tau}$ if $\alpha(q-1)>k,$ or
equivalently $\left(  q+1-p\right)  \alpha>p;$ and $y_{\alpha}^{\prime
}=-(kK+L^{q}+o(1))e^{-k\tau}$ if $\alpha(q-1)=k;$ and $y_{\alpha}^{\prime}=$
$-(L^{q}+o(1))e^{-\alpha(q-1)\tau}$ if $\alpha(q-1)<k.$ The estimates
(\ref{exw}) follow by integration.This gives also an expansion of the
derivatives, by computing $w^{\prime}=-r^{-(\alpha+1)}(\alpha y_{\alpha
}-y_{\alpha}^{\prime}):$%
\[
w^{\prime}(r)=\left\{
\begin{array}
[c]{c}%
-r^{-(\alpha+1)}\left(  \alpha L+(\alpha+k)\left(  K+o(1)\right)
r^{-k}\right)  ,\qquad\quad\text{if }\left(  q+1-p\right)  \alpha>p,\\
-r^{-(\alpha+1)}\left(  \alpha L+(\alpha+k)\left(  K+M+o(1)\right)
r^{-k}\right)  ,\qquad\text{if }\left(  q+1-p\right)  \alpha=p,\\
-r^{-(\alpha+1)}\left(  \alpha L+\alpha q(M+o(1))r^{-\alpha(q-1)}\right)
,\qquad\quad\text{if }\left(  q+1-p\right)  \alpha<p;
\end{array}
\right.
\]
which corresponds to a derivation term to term.
\end{proof}

\subsection{Continuous dependence and sign properties}

Next we extend an important property of continuity with respect to the initial
data, given in \cite{HW} in the case $p=2.$ The proof is different; it follows
from the estimates of Proposition (\ref{der}) and from the expression of
$L(a)$ in terms of function $J_{\alpha}.$

\begin{theorem}
\label{cont}Assume (\ref{log}). For any solution $w=w(.,a)$ of problem
(\ref{un}), (\ref{ini}), setting $L=L(a),$ the function $a\longmapsto L(a)$ is
continuous on whole $\mathbb{R}.$ Moreover the family of functions $\left(
a\longmapsto(1+r)^{\alpha}w(r,a)\right)  _{r\geq0}$ is equicontinuous on
$\mathbb{R}.$\medskip
\end{theorem}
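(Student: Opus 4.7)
The plan is to combine the integral representation $L(a)=J_{\alpha}(R,a)+\int_{R}^{\infty}J_{\alpha}^{\prime}(s,a)\,ds$ obtained in the proof of Proposition \ref{alp} with the continuity-in-$a$ of the constants in the decay estimates of Propositions \ref{uti} and \ref{der}, and then reduce everything else to continuous dependence on compact intervals (Remark \ref{dep}).

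First I would fix a bounded neighborhood $\mathcal{A}$ of $a_{0}$ and establish tail estimates that are \emph{uniform in $a\in\mathcal{A}$}. Pick $\gamma>\alpha$ (arbitrary if $p>2$, and in $(\alpha,\delta)$ if $p<2$, which is possible under (\ref{log})). Proposition \ref{uti}(ii) gives $|w(r,a)|\leq C_{\gamma}(a)\bigl((1+r)^{-\gamma}+(1+r)^{-\alpha}\bigr)$ with $C_{\gamma}$ continuous, and Proposition \ref{der} then gives $|w^{\prime}(r,a)|\leq C_{\gamma}^{\prime}(a)(1+r)^{-\alpha-1}$ with $C_{\gamma}^{\prime}$ continuous. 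Substituting into
\[
|J_{\alpha}^{\prime}(s,a)|\leq (\alpha+N)s^{\alpha-2}|w^{\prime}(s,a)|^{p-1}+s^{\alpha-1}|w(s,a)|^{q},
\]
I get a bound of the form $K_{1}(a)s^{(2-p)\alpha-p-1}+K_{2}(a)s^{-1-\alpha(q-1)}$, where both exponents are strictly less than $-1$: the first by assumption (\ref{log}), the second because $q>1$ and $\alpha>0$. Hence $\int_{R}^{\infty}|J_{\alpha}^{\prime}(s,a)|\,ds\leq\varepsilon_{1}(R)$ and the correction term $r^{\alpha-1}|w^{\prime}(r,a)|^{p-1}\leq K_{3}(a)r^{(2-p)\alpha-p}$ both tend to $0$ as $R,r\to\infty$, uniformly for $a\in\mathcal{A}$.

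With these uniform tails in hand, the continuity of $L$ at $a_{0}$ is immediate: given $\varepsilon>0$, choose $R$ so large that $|\int_{R}^{\infty}J_{\alpha}^{\prime}(s,a)\,ds|\leq\varepsilon/2$ for every $a\in\mathcal{A}$; then by Remark \ref{dep} the map $a\mapsto J_{\alpha}(R,a)=R^{\alpha}w(R,a)+R^{\alpha-1}|w^{\prime}(R,a)|^{p-2}w^{\prime}(R,a)$ is continuous, so for $|a-a_{0}|$ small one has $|J_{\alpha}(R,a)-J_{\alpha}(R,a_{0})|\leq\varepsilon/2$, and the representation formula yields $|L(a)-L(a_{0})|\leq\varepsilon$. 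For equicontinuity of the family $\bigl(a\mapsto(1+r)^{\alpha}w(r,a)\bigr)_{r\geq 0}$, I use
\[
r^{\alpha}w(r,a)=L(a)-\int_{r}^{\infty}J_{\alpha}^{\prime}(s,a)\,ds-r^{\alpha-1}|w^{\prime}(r,a)|^{p-2}w^{\prime}(r,a),
\]
multiplied by $((1+r)/r)^{\alpha}$. For $r\geq R$ with $R$ chosen as above, both tail terms are $\leq\varepsilon/4$ uniformly in $a\in\mathcal{A}$, so continuity of $L$ controls the difference $(1+r)^{\alpha}|w(r,a)-w(r,a_{0})|$ for $|a-a_{0}|$ small enough, uniformly in $r\geq R$. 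For $r\in[0,R]$ the factor $(1+r)^{\alpha}$ is bounded, and Remark \ref{dep} provides uniform continuous dependence of $w(r,a)$ on $a$ on the compact interval $[0,R]$. Taking the smaller of the two $\delta$'s finishes the proof.

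The only genuine obstacle is ensuring that the tail estimates are uniform in $a$ on compact neighborhoods; once the continuous dependence of the constants $C_{\gamma}(a)$ and $C_{\gamma}^{\prime}(a)$ in Propositions \ref{uti} and \ref{der} is invoked, the argument is a standard three-$\varepsilon$ split between a large-$r$ tail handled by the representation and a compact part handled by classical ODE continuous dependence.
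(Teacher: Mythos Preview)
Your proposal is correct and follows essentially the same approach as the paper: uniform-in-$a$ decay estimates from Propositions \ref{uti} and \ref{der}, the integral representation of $L(a)$ via $J_{\alpha}$, and a split into a uniformly small tail plus a compact piece controlled by Remark \ref{dep}. The only cosmetic differences are that the paper writes $L(a)=\int_{0}^{\infty}J_{\alpha}^{\prime}(s,a)\,ds$ (using $J_{\alpha}(0,a)=0$) and first proves equicontinuity of $a\mapsto J_{\alpha}(r,a)$ before passing to $(1+r)^{\alpha}w(r,a)$, whereas you work directly with the representation $r^{\alpha}w=L-\int_{r}^{\infty}J_{\alpha}^{\prime}-r^{\alpha-1}|w^{\prime}|^{p-2}w^{\prime}$; neither change is substantive.
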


\begin{proof}
\textbf{Let} $a_{0}\in\mathbb{R}.$ From Propositions \ref{der} and
(\ref{uti}), there exists a neighborhood $V$ of $a_{0}$ and a constant
$C=C(V)>0$ such that for any $a\in V,$%
\begin{equation}
\left\vert w(r,a)\right\vert \leq C(1+r)^{-\alpha},\quad\quad\left\vert
w^{\prime}(r,a)\right\vert \leq C(1+r)^{-(\alpha+1)}, \label{uni}%
\end{equation}
From (\ref{eli}), we have for any $r\geq1,$
\begin{equation}
L(a)=J_{\alpha}(r,a)+%
%TCIMACRO{\dint \nolimits_{r}^{\infty}}%
%BeginExpansion
{\displaystyle\int\nolimits_{r}^{\infty}}
%EndExpansion
J_{\alpha}^{^{\prime}}(s,a)ds=%
%TCIMACRO{\dint \nolimits_{0}^{\infty}}%
%BeginExpansion
{\displaystyle\int\nolimits_{0}^{\infty}}
%EndExpansion
J_{\alpha}^{^{\prime}}(s,a)ds \label{ele}%
\end{equation}
where $J_{\alpha}(r,a)=r^{\alpha}\left(  w(r,a)+r^{-1}\left\vert w^{\prime
}\right\vert ^{p-2}w^{\prime}(r,a)\right)  ,$ since $J_{\alpha}(0,a)=0.$ Then
with a new constant $C=C(V),$ for any $a\in V,$
\[%
%TCIMACRO{\dint \nolimits_{r}^{\infty}}%
%BeginExpansion
{\displaystyle\int\nolimits_{r}^{\infty}}
%EndExpansion
\left\vert J_{\alpha}^{^{\prime}}(s,a)\right\vert ds\leq C\left(
r^{-\alpha(q-1)}+r^{-(p-\alpha(2-p)}\right)  ;
\]
hence for any $\varepsilon>0,$ there exists $r_{\varepsilon}\geq1$ such that
\[
\sup_{a\in V}%
%TCIMACRO{\dint \nolimits_{r_{\varepsilon}}^{\infty}}%
%BeginExpansion
{\displaystyle\int\nolimits_{r_{\varepsilon}}^{\infty}}
%EndExpansion
\left\vert J_{\alpha}^{^{\prime}}(s,a)\right\vert ds\leq\varepsilon.
\]
From Remark \ref{dep}, $w(.,a)$ depends continuously on $a$ on any compact
set, thus also $J_{\alpha}^{^{\prime}}(.,a).$ Then there exists a neighborhood
$V_{\varepsilon}$ of $a_{0}$ contained in $V$ such that
\[
\sup_{a\in V\varepsilon}%
%TCIMACRO{\dint \nolimits_{0}^{r_{\varepsilon}}}%
%BeginExpansion
{\displaystyle\int\nolimits_{0}^{r_{\varepsilon}}}
%EndExpansion
\left\vert J_{\alpha}^{^{\prime}}(r_{\varepsilon},a)-J_{\alpha}^{^{\prime}%
}(r_{\varepsilon},a_{0})\right\vert \leq\varepsilon,
\]
and consequently $\left\vert L(a)-L(a_{0})\right\vert \leq3\varepsilon.$ This
proves that $L$ is continuous at $a_{0}.$ Moreover
\[
\sup_{a\in V\varepsilon}\sup_{r\in\left[  0,\infty\right)  }\left\vert
J_{\alpha}(r,a)-J_{\alpha}(r,a_{0})\right\vert \leq2\varepsilon,
\]
thus the family of functions $\left(  a\longmapsto J_{\alpha}(r,a)\right)
_{r\geq0}$ is equicontinuous at $a_{0}.$ Next for any $r\geq1$ and any $a\in
V,$
\[
\left\vert r^{\alpha}w(r,a)-J_{\alpha}(r,a)\right\vert =r^{\alpha-1}\left\vert
w^{\prime}(r,a)\right\vert ^{p-1}\leq Cr^{(2-p)\alpha-p},
\]
thus for any $\varepsilon>0,$ there exists $\tilde{r}_{\varepsilon}\geq
r_{\varepsilon}$ such that
\[
\sup_{a\in V,r\geq\tilde{r}_{\varepsilon}}\left\vert r^{\alpha}%
w(r,a)-J_{\alpha}(r,a)\right\vert \leq\varepsilon.
\]
It implies
\[
\sup_{a\in V_{\varepsilon},r\geq\tilde{r}_{\varepsilon}}\left\vert
(1+r)^{\alpha}(w(r,a)-w(r,a_{0}))\right\vert \leq(2^{\alpha}+2)\varepsilon.
\]
And there exists a neighborhood $\tilde{V}_{\varepsilon}$ of $a_{0}$ contained
in $V_{\varepsilon},$ such that
\[
\sup_{a\in\tilde{V}_{\varepsilon},r\leq\tilde{r}_{\varepsilon}}\left\vert
(1+r)^{\alpha}(w(r,a)-w(r,a_{0}))\right\vert \leq\varepsilon.
\]
Then
\[
\sup_{a\in\tilde{V}_{\varepsilon},r\in\left[  0,\infty\right)  }\left\vert
(1+r)^{\alpha}(w(r,a)-w(r,a_{0}))\right\vert \leq(2^{\alpha}+2)\varepsilon,
\]
which shows that the family of functions $a\longmapsto(1+r)^{\alpha}w(r,a)$
$(r\geq0)$ is equicontinuous at $a_{0}.\medskip$
\end{proof}

As a consequence we obtain some results concerning the number of zeros of the solutions

\begin{theorem}
\label{pn}Assume (\ref{log}).

\noindent(i) Suppose that for some $a_{0}>0,$ $w(.,a_{0})$ has a finite number
of isolated zeros, denoted by $N(a_{0}).$ If $L(a_{0})\neq0,$ then
$N(a)=N(a_{0})$ for any $a$ close to $a_{0}.$

\noindent(ii) Suppose $q<q^{\ast}.$ Then $\left\{  a>0:L(a)=0\right\}  $ is
unbounded from above. Moreover there exists a increasing sequence $\left(
a_{m}\right)  $ tending to $\infty,$ such that $w(.,a_{m})$ has at least $m+1$
isolated zeros and $L(a_{m})=0.$

\noindent(iii) Suppose $q<q^{\ast},$ $p<2$ and $\alpha<N$. Then for any
$m\in\mathbb{N},$
\[
\bar{a}_{m}=\inf\left\{  a>0:N(a)\geq m+1\right\}  \in\left(  0,\infty\right)
,
\]
and if $m\geq1,$ then $w(.,\bar{a}_{m})$ has precisely $m$ zeros and
$L(\bar{a}_{m})=0.$
\end{theorem}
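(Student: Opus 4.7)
For (i), the plan is to combine the equicontinuity from Theorem~\ref{cont} with Proposition~\ref{tuc}. Continuity of $L$ gives a neighbourhood $V$ of $a_{0}$ on which $L(a)$ keeps a constant nonzero sign, and the uniform weighted bound $\sup_{r\geq 0}(1+r)^{\alpha}|w(r,a)-w(r,a_{0})|\to 0$ (Theorem~\ref{cont}) produces $R>0$ such that, for every $a\in V$, $w(r,a)$ has the sign of $L(a_{0})$ on $[R,\infty)$; in particular all zeros of $w(.,a)$ lie in $[0,R]$. Each isolated zero $r_{i}$ of $w(.,a_{0})$ is transversal ($w'(r_{i},a_{0})\neq 0$): if $w(r_{i},a_{0})=w'(r_{i},a_{0})=0$, then uniqueness (Cauchy--Lipschitz for $p<2$, the fixed-point argument of Theorem~\ref{exi} for $p>2$) would force $w\equiv 0$ on $[r_{i},\infty)$, contradicting isolatedness. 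By the $C^{1}_{\mathrm{loc}}$-continuity in $a$ from Remark~\ref{dep}, each such zero persists as a unique simple zero of $w(.,a)$ and no new ones can appear on $[0,R]$, so $N(a)=N(a_{0})$.

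For (ii), I combine Proposition~\ref{sig} with (i) via a supremum argument. Set $b_{m}=\sup\{a>0:N(a)\leq m\}$; by Proposition~\ref{sig}, $b_{m}<\infty$ and $b_{m}\to\infty$, and by definition $N(a)\geq m+1$ for every $a>b_{m}$. If $L$ were nowhere zero on $(b_{m},\infty)$, part (i) would make $N$ locally constant on this interval, hence globally constant there, contradicting $N(a)\to\infty$ from Proposition~\ref{sig}. One may therefore pick $a_{m}\in(b_{m},\infty)$ with $L(a_{m})=0$; automatically $N(a_{m})\geq m+1$. Selecting $a_{m+1}>\max(a_{m},b_{m+1})$ recursively yields a strictly increasing sequence with $a_{m}\to\infty$.

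For (iii), under $p<2$ and $\alpha<N$, set $\bar{a}_{m}=\inf\{a>0:N(a)\geq m+1\}$. Proposition~\ref{sig} yields $\bar{a}_{m}<\infty$, while Proposition~\ref{zer}(i) gives $\bar{a}_{m}\geq(N-\alpha)^{1/(q-1)}>0$. The identity $L(\bar{a}_{m})=0$ follows from (i) by the same dichotomy as in (ii): otherwise $N$ would be constant on a neighbourhood of $\bar{a}_{m}$, contradicting the definition of the infimum whether $N(\bar{a}_{m})\leq m$ or $N(\bar{a}_{m})\geq m+1$. The crux, and main obstacle, is showing $N(\bar{a}_{m})=m$ exactly. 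The upper bound follows from $a_{k}\nearrow\bar{a}_{m}$ with $N(a_{k})\leq m$: each transversal zero of $w(.,\bar{a}_{m})$ produces, by $C^{1}_{\mathrm{loc}}$-continuity, a nearby zero of $w(.,a_{k})$, forcing $N(\bar{a}_{m})\leq m$. The lower bound $N(\bar{a}_{m})\geq m$ is the delicate step: take $a_{k}\searrow\bar{a}_{m}$ with $N(a_{k})\geq m+1$. Since $p<2$, Cauchy--Lipschitz prevents two distinct zeros of $w(.,a_{k})$ from coalescing into a double zero of $w(.,\bar{a}_{m})$ (such a point would propagate $w\equiv 0$). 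Moreover Proposition~\ref{tuc}, which uses precisely the hypothesis $\alpha<N$, confines all but at most one zero of each $w(.,a_{k})$ to a fixed compact $[0,M(A)]$, so at most one zero can escape to infinity. The surviving $\geq m$ zeros converge to distinct zeros of $w(.,\bar{a}_{m})$, giving $N(\bar{a}_{m})\geq m$ and completing the count.
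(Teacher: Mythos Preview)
Your argument follows the paper's own route in all three parts: equicontinuity from Theorem~\ref{cont} plus transversality for (i), a connectedness/local-constancy contradiction built on Proposition~\ref{sig} for (ii), and the openness of $B_m$ together with Proposition~\ref{tuc} and the Cauchy--Lipschitz no-double-zero argument for (iii). Parts (i) and (iii) are correct as written.

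There is one genuine gap in (ii): the claim ``by Proposition~\ref{sig}, $b_m\to\infty$'' is not justified, and in fact can fail. Proposition~\ref{sig} only supplies upper bounds $\overline{a_m}$ with $N(a)\geq m+1$ for $a>\overline{a_m}$; it says nothing about the set $\{a:N(a)\leq m\}$ being nonempty or unbounded. Worse, under~(\ref{log}) with $p_1<p<2$ and $N\leq\alpha<\delta$ close to $\delta$, Theorem~\ref{osci} shows that every $w(.,a)$ is oscillatory, so $N(a)=\infty$ for all $a$ and $b_m=\sup\emptyset$ for every $m$. Your recursive construction $a_{m+1}>\max(a_m,b_{m+1})$ then no longer forces $a_m\to\infty$.

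The fix is immediate and is what the paper does: first note that the contradiction argument works on \emph{any} half-line $(c,\infty)$ (if $L\neq0$ there, then $N$ is finite and locally constant, hence constant, contradicting Proposition~\ref{sig}), so $\{a>0:L(a)=0\}$ is unbounded above. Then, since Proposition~\ref{sig} gives $N(a)\geq m+1$ for all $a>\overline{a_m}$, choose recursively $a_m>\max(a_{m-1},\overline{a_m},m)$ with $L(a_m)=0$; this guarantees both $N(a_m)\geq m+1$ and $a_m\to\infty$ without any appeal to $b_m$.
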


\begin{proof}
(i) Let $r_{1}<r_{2}<..<r_{N(a_{0})}$ be the isolated zeros of $w(.,a_{0}).$
Since $L(a_{0})\neq0,$ there are no other zeros, and there exists
$\varepsilon>0$ such that $\inf_{r\geq r_{N(a_{0})}+1}r^{\alpha}\left\vert
w(r,a_{0})\right\vert \geq\varepsilon$. From Theorem \ref{cont}, there exists
a neighborhood $V_{\varepsilon}$ of $a_{0}$ such that $\inf_{r\geq
r_{N(a_{0})}+1}r^{\alpha}\left\vert w(r,a)\right\vert \geq\varepsilon/2$ for
any $a\in V_{\varepsilon}.$ From Remark \ref{dep}, there exists a neighborhood
$\tilde{V}_{\varepsilon}\subset V_{\varepsilon}$ such that $w(r,a)$ has
exactly $N(a_{0})$ zeros on $\left[  0,r_{N(a_{0})}+1\right]  ,$ hence
$N(a)=N(a_{0}).\medskip$

\noindent(ii) Assume that for some $a^{\ast}>0,$ $L(a)\neq0$ for any
$a\in\left(  a^{\ast},\infty\right)  .$ From Proposition \ref{zer}, (iii) and
(iv), $w(.,a)$ has a finite number of isolated zeros $N(a)$. The set
\[
\left\{  a\in\left(  a^{\ast},\infty\right)  :N(a)=N(a^{\ast})+1\right\}
\]
is closed in $\left(  a^{\ast},\infty\right)  $ since $N$ is locally constant,
and open; then $N(a)$ is constant on $\left(  a^{\ast},\infty\right)  ,$ which
contradicts Proposition \ref{sig}. Moreover there exists a increasing sequence
$\left(  a_{m}^{\star}\right)  $ tending to $\infty$ such that $w(.,a_{m}%
^{\star})$ has at least $m+1$ isolated zeros; as above it cannot happen that
$L(a)\neq0$ for any $a\in\left(  a_{m}^{\ast},\infty\right)  ,$ hence there
exists $a_{m}\geq a_{m}^{\ast},$ such that $w(.,a_{m})$ has at least $m+1$
isolated zeros and $L(a_{m})=0.\medskip$

\noindent(iii) Here $w(.,a)$ has only isolated zeros. Following the proof of
\cite[Propositions 3.5 and 3.7]{W1}, for any $m\in\mathbb{N},$ the set
$B_{m}=\left\{  a>0:\text{ }N(a)\geq m+1\right\}  $ is open and $z_{m}%
(a)=m^{\text{th}}$ zero of $w(.,a)$ depends continuously on $a$. Using
Proposition \ref{tuc}, one can show that, for any $a_{0}>0,$ $N(a)=N(a_{0})$
or $N(a_{0})+1$ for any $a$ in some neighborhood of $a_{0}.$ Then necessarily
$\bar{a}_{m}\not \in B_{m},$ and $N(\bar{a}_{m})=m,$ and $L(\bar{a}_{m})=0$ by
contradiction in (i).\medskip
\end{proof}

\begin{remark}
When $q<q^{\ast}$ and $p>2,$ for any $a_{0}>0,$ we have $N(a)\geq N(a_{0})$
for any $a$ in some neighborhood of $a_{0}$, but we cannot prove that
$N(a)\leq$ $N(a_{0})+2,$ thus we have no specific information of the number of
zeros of the compact support solutions.
\end{remark}

\subsection{Existence of nonnegative solutions}

Here we study the existence of nonegative solutions of equation (\ref{un}). If
such solutions exist, then either $p_{1}<p$ and $\alpha<N,$ from From
Proposition \ref{zer}, or $p<p_{1},$ thus $\alpha<\delta\leq N;$ in any case
$\alpha<N.$ Reciprocally, when $\alpha<N,$ we first prove the existence of
slow decaying solutions for $\left\vert a\right\vert $ small enough.

\begin{proposition}
\label{ora}Assume (\ref{log}), and $\alpha<N.$ Let $\underline{a}>0$ be
defined at Proposition \ref{zer}. Then for any $a\in\left(  0,\underline
{a}\right]  $, $w(r,a)>0$ on $\left[  0,\infty\right)  ,$ and $L(a)>0.$
\end{proposition}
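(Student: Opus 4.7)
The claim $w(\cdot,a)>0$ on $[0,\infty)$ is exactly Proposition \ref{zer}(i); since $w>0$, Proposition \ref{alp} gives $L(a)\ge 0$ automatically, so the only task is to rule out $L(a)=0$.

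The plan is to exploit the function $J_N$ of (\ref{gg}). Under the hypothesis $a\le\underline{a}=(N-\alpha)^{1/(q-1)}$, we have $0<w(r)\le a\le\underline{a}$ for all $r\ge 0$, and hence $w(r)^{q-1}\le N-\alpha$; then (\ref{jpn}) gives $J_N'(r)=r^{N-1}(N-\alpha-w^{q-1})w\ge 0$. Moreover $J_N(0)=0$, and (\ref{pri}) implies $w'<0$ just after $0$, so that $w(r)<a\le\underline{a}$ strictly for small $r>0$; thus $J_N'>0$ on some interval $(0,r_0)$, and consequently $J_N(r)>0$ for every $r>0$.

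Suppose for contradiction that $L(a)=0$. By Proposition \ref{tim}, either $p>2$ and $w$ has compact support --- which contradicts $w>0$ on $[0,\infty)$ --- or $p<2$ and $w(r)=O(r^{-\delta})$. In the latter case, $w$ has no zeros at all (hence finitely many), so Proposition \ref{des} gives the precise leading behavior of $w$ at infinity in each of the three regimes $p_1<p<2$, $p<p_1$, and $p=p_1$. I then substitute these asymptotics, together with Proposition \ref{der} applied to $w'$, into $J_N(r)=r^Nw-r^{N-1}|w'|^{p-1}$ and check that in each regime $J_N(r)<0$ for large $r$.

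For $p_1<p<2$, using $w\sim\ell r^{-\delta}$ and the algebraic identity $(\delta+1)(p-1)=\delta-1$ (which makes both terms of $J_N$ of order $r^{N-\delta}$), together with (\ref{ell}), the bracket simplifies to $\ell-(\delta\ell)^{p-1}=\ell(\alpha-N)/(\delta-N)$, negative because $\alpha<N<\delta$, so $J_N(r)\to 0^-$. For $p<p_1$, using $w\sim cr^{-\eta}$ and $(\eta+1)(p-1)=N-1$, the term $r^{N-1}|w'|^{p-1}$ tends to $(c\eta)^{p-1}$ while $r^Nw\to 0$, yielding $J_N(r)\to-(c\eta)^{p-1}<0$. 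For $p=p_1$, the same scheme with the logarithmic asymptotic (\ref{lam}) should give $J_N(r)\sim-(N\varrho)^{p-1}(\ln r)^{-(N-1)/2}<0$ for large $r$. Each of these contradicts $J_N(r)>0$, so $L(a)\neq 0$ and therefore $L(a)>0$.

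The main obstacle is the critical case $p=p_1$: here $r^Nw$ and $r^{N-1}|w'|^{p-1}$ decay at the same logarithmic rate, and one must track the log-correction in both terms carefully to extract the negative sign of the leading coefficient of $J_N$ from Proposition \ref{des}(iii). The other two regimes reduce to one-line verifications once Propositions \ref{des} and \ref{der} are invoked.
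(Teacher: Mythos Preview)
Your strategy --- positivity from Proposition \ref{zer}(i), monotonicity of $J_N$, then a contradiction from the behavior of $J_N$ at infinity if $L(a)=0$ --- is exactly the paper's. The difference is that you do much more work than needed in the endgame, and in doing so you introduce a small citation gap.

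The paper's shortcut is this: since $w'<0$ for large $r$ (Proposition \ref{pro}), one has simply
\[
J_N(r)=r^Nw+r^{N-1}|w'|^{p-2}w'\le r^Nw.
\]
Proposition \ref{des} gives the asymptotics of $w$ alone, and in each regime ($N<\delta$, $N>\delta$ hence $N<\eta$, and $N=\delta$) one reads off directly that $r^Nw\to 0$. Hence $\limsup J_N\le 0$, which together with $J_N\ge 0$ nondecreasing forces $J_N\equiv 0$ and thus $J_N'\equiv 0$, impossible. No information on $w'$ beyond its sign is required, and the ``obstacle'' you flag at $p=p_1$ disappears entirely.

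Your route --- computing both terms of $J_N$ to leading order and showing the bracket is negative --- is correct in principle, but note that Proposition \ref{der} gives only $w'=O(r^{-d-1})$, not the precise limits $|w'|\sim\delta\ell\,r^{-\delta-1}$ or $|w'|\sim c\eta\,r^{-\eta-1}$ that your computations use. Those sharp asymptotics do hold, but they come from Lemma \ref{com}(ii) (equivalently, the convergence of $Y$ in system (\ref{sys})), not from Proposition \ref{der}. In the critical case $p=p_1$ you would similarly need the precise behavior of $Y$ extracted from the proof of Proposition \ref{des}(iii). All of this is avoidable via the one-line bound above.
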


\begin{proof}
Let $a\in\left(  0,\underline{a}\right]  .$ By construction of $\underline
{a},$ $w=w(r,a)>0,$ from Proposition \ref{zer}, and function $J_{N}$ is
nondecreasing and $J_{N}(0)=0;$ and $J_{N}(r)\leq r^{N}w$ near $\infty,$ from
Proposition \ref{pro}. Assume that $L(a)=0.$ Then $p<2$ from Proposition
\ref{tim}. From Proposition \ref{des}, either $N<\delta,$ and $r^{N}%
w=O(r^{N-\delta})$; or $\delta<N$ and $N<\eta$ from (\ref{dn}), and
$r^{N}w=O(r^{N-\eta})$; or $\delta=N$ and $r^{N}w=O(\ln r)^{-(N+1)/2}$. In any
case, $\lim\sup_{r\rightarrow\infty}J_{N}(r)=0$; then $J_{N}\equiv0,$ thus
$J_{N}^{\prime}\equiv0,$ which is impossible.\medskip
\end{proof}

Next we consider the subcritical case $1<q<q^{\ast}$ and prove the existence
of fast decaying solutions. Notice that in that range $p>p_{2};$ if moreover
$1<q<q_{1},$ then $p>p_{1}.$

\begin{theorem}
\label{fast} Assume (\ref{log}) and $\alpha<N,$ and $1<q<q^{\ast}.$ Then there
exists $a>0$ such that $w(.,a)$ is nonnegative and such that $L(a)=0.$ If
$p>2,$ it has a compact support. If $p<2,$ it is positive and satisfies
(\ref{lim}), (\ref{lum}) or (\ref{lam}).
\end{theorem}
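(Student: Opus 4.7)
The plan is a shooting argument. Define
\[
P := \{a > 0 : w(\cdot, a) > 0 \text{ on } [0, \infty) \text{ and } L(a) > 0\}.
\]
By Proposition \ref{ora} we have $(0, \underline{a}] \subset P$, so $P$ is non-empty. Since $1 < q < q^*$, Proposition \ref{sig} (with $m = 0$) supplies $\bar a > 0$ such that $w(\cdot, a)$ has an isolated zero whenever $a > \bar a$; hence $P \subset (0, \bar a]$ is bounded above. I will prove $P$ is open, so that $a_* := \sup P$ does not belong to $P$, and then verify that $w_* := w(\cdot, a_*)$ is the required nonnegative fast decaying solution.

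To show openness of $P$, fix $a_0 \in P$. Theorem \ref{cont} gives continuity of $L$, so $L$ remains positive in a neighborhood of $a_0$; it also asserts that the family $\{a \mapsto (1+r)^\alpha w(r,a)\}_{r \geq 0}$ is equicontinuous at $a_0$. Since $(1+r)^\alpha w(r, a_0) \to L(a_0) > 0$ as $r \to \infty$, one can choose $R > 0$ and a neighborhood $V$ of $a_0$ such that $(1+r)^\alpha w(r,a) \geq L(a_0)/2$ for every $r \geq R$ and every $a \in V$. On the compact $[0, R]$, Remark \ref{dep} preserves strict positivity of $w(\cdot, a)$ near $a_0$. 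Hence $P$ is open, and $a_* \notin P$.

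Now I analyze $w_*$. Continuity of $L$ yields $L(a_*) \geq 0$. Suppose $w_*$ has a first zero $r_1 > 0$. When $p < 2$, Proposition \ref{zer}(iii) combined with Cauchy--Lipschitz forces $w_*'(r_1) \neq 0$, so $w_*'(r_1) < 0$; by Remark \ref{dep} every $w(\cdot, a)$ with $a$ near $a_*$ then admits a zero close to $r_1$, contradicting the existence of $a_n \in P$ with $a_n \nearrow a_*$. When $p > 2$, either $w_*'(r_1) < 0$ and the same contradiction applies, or $w_*(r_1) = w_*'(r_1) = 0$; in the latter case Theorem \ref{exi} forces $w_* \equiv 0$ on $[r_1, \infty)$, giving a nonnegative compact-support solution with $L(a_*) = 0$, as required. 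In the remaining scenario $w_* > 0$ on $[0, \infty)$; since $a_* \notin P$ this yields $L(a_*) = 0$, and Proposition \ref{tim}(i) shows this scenario is incompatible with $p > 2$, so it occurs only for $p < 2$.

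To finish, when $p < 2$ and $w_* > 0$ with $L(a_*) = 0$, Proposition \ref{tim}(ii) yields $w_*(r) = O(r^{-\delta})$, and since $w_*$ has no zero Proposition \ref{des} supplies the precise asymptotics (\ref{lim}), (\ref{lum}), or (\ref{lam}) according as $p > p_1$, $p < p_1$, or $p = p_1$. The main obstacle is the openness of $P$: pointwise continuous dependence on the initial datum does not suffice to preserve positivity of $w(\cdot, a)$ \emph{uniformly} for large $r$, and this is precisely what the equicontinuity assertion in Theorem \ref{cont} was designed to provide.
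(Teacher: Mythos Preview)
Your proof is correct and follows essentially the same shooting argument as the paper: the paper defines the same set (calling it $A$), proves it is open via the equicontinuity statement of Theorem \ref{cont}, and takes $a_{\sup}=\sup A$ (it also mentions the alternative $a_{\inf}=\inf B$ where $B$ is the set of data with an isolated zero). The paper's conclusion at $a_{\sup}$ is stated quite tersely (``$w(.,a)$ is nonnegative, positive if $p<2$, and $L(a)=0$''), whereas you spell out the case distinction at the first zero $r_1$ and invoke Proposition \ref{tim}(i) to exclude the strictly positive scenario when $p>2$; this extra detail is correct and clarifies the argument.
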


\begin{proof}
Let
\begin{equation}
A=\left\{  a>0:w(.,a)>0\;\text{on }\left(  0,\infty\right)  \;\text{and
}L(a)>0\right\}  ,\text{ } \label{ana}%
\end{equation}%
\begin{equation}
B=\left\{  a>0:w(.,a)\quad\text{has at least an isolated zero}\right\}  .
\label{bnb}%
\end{equation}
From Proposition \ref{ora} and \ref{sig}, $A$ and $B$ are nonempty:
$A\supset\left(  0,\underline{a}\right]  $ and $B\supset\left[  \overline
{a},\infty\right)  .$ From the local continuous dependence of the solutions on
the initial value, $B$ is open. For any $a_{0}\in A,$there exists
$\varepsilon>0$ such that $\min_{r\geq0}(1+r)^{\alpha}w(r,a_{0})\geq
\varepsilon.$ From Theorem \ref{cont}, there exists a neighborhood
$V_{\varepsilon}$ of $a_{0}$ such that $\min_{r\geq0}(1+r)^{\alpha}%
w(r,a)\geq\varepsilon/2$ for any $a\in V_{\varepsilon},$ hence $V_{\varepsilon
}\subset A,$ thus $A$ is open. Let $a_{\inf}=\inf B>\underline{a}$ and
$a_{\sup}=\sup A<\overline{a}.$ Taking $a=a_{\inf}$ or $a_{\sup},$ then
$w(.,a)$ is nonnegative, positive if $p<2,$ and $L(a)=0,$ and the conclusion
follows from Proposition \ref{des}. We cannot assert that $a_{\inf}=a_{\sup}$.
\end{proof}

\begin{remark}
As it was noticed in \cite{SW} for $p=2,$ there exists an infinity of pairs
$a_{1},a_{2}$ such that $0<a_{1}<a_{2}<a_{\inf},$ thus $w(.,a_{1})>0,$
$w(.,a_{2})>0,$ and $L(a_{1})=L(a_{2}).$ Indeed from the continuity of $L$
proved at Theorem \ref{cont}, $L$ attains at least twice any value in $\left(
0,\max_{\left[  0,a_{\inf}\right]  }L\right)  .$
\end{remark}

In the supercritical case $q\geq q^{\ast}$ we give sufficient conditions
assuring that all the solutions are positive, and then slowly decaying. Recall
that $q^{\ast}\leq1$ whenever $p\leq p_{2}.$

\begin{theorem}
\label{pos} Assume (\ref{log}) and one of the following conditions:

\noindent(i) $p_{2}<p$ and $\alpha\leq N/2$ and $q\geq q^{\ast};$

\noindent(ii) $p\leq p_{2}$ and $1<q.$

\noindent(iii) $p_{2}<p$ and $N/2<\alpha<(N-1)p^{\prime}/2$ and $q\geq
q_{\alpha}^{\ast},$ where $q_{\alpha}^{\ast}>q^{\ast}$ is given by
\begin{equation}
\frac{1}{q_{\alpha}^{\ast}+1}=\frac{N-1}{2\alpha}-\frac{1}{p^{\prime}}.
\label{etol}%
\end{equation}
Then for any $a>0,$ $w(r,a)>0$ on $\left[  0,\infty\right)  $, and $L(a)>0$.
\end{theorem}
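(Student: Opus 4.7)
I argue by contradiction, ruling out separately that $w = w(\cdot, a)$ has an isolated zero and that $w > 0$ on $[0,\infty)$ with $L(a) = 0$. Throughout, Proposition \ref{pro} together with (\ref{ext}) shows that $r = 0$ is the unique extremum of $w$ on $\{w > 0\}$, so $w' < 0$ on $\{r > 0 : w(r) > 0\}$.

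For cases (i) and (iii), I use the Pohozaev--Pucci--Serrin function $V := V_{\lambda,\sigma,e}$ from (\ref{vla}) with the optimizing choice $e := \sigma + \alpha - \lambda$, which reduces the last bracket in (\ref{dif}) to $\tfrac{1}{4}(\lambda - 2\sigma)(\lambda - 2\alpha)\,w^2$. In \emph{case (i)}, take $\lambda = N$ and $\sigma = (N-p)/p$: the $|w'|^p$-coefficient $N - 1 - \sigma - \lambda/p'$ vanishes, the cross coefficient $\sigma(\lambda - N)$ vanishes, $\sigma - \lambda/(q+1) = (N-p)/p - N/(q+1) \geq 0 \iff q \geq q^{\ast}$, and $\tfrac{1}{4}(\lambda - 2\sigma)(\lambda - 2\alpha) = \tfrac{N(p-2)+2p}{4p}(N - 2\alpha) \geq 0$ since $p > p_{2}$ and $\alpha \leq N/2$. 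In \emph{case (iii)}, take $\lambda = 2\alpha$ and $\sigma = N - 1 - 2\alpha/p'$: the $|w'|^p$- and $w^2$-coefficients vanish, the $|w|^{q+1}$-coefficient is $\geq 0 \iff q \geq q_{\alpha}^{\ast}$, and the cross term $\sigma(\lambda - N)\,r^{-1} w|w'|^{p-2}w'$ is $\leq 0$ because $\sigma > 0$ (from $\alpha < (N-1)p'/2$), $\lambda - N > 0$ (from $\alpha > N/2$), and $w|w'|^{p-2}w' < 0$ on $\{w > 0\}$. In both cases $V' \leq 0$ wherever $w > 0$. Since $V(0) = 0$ and $V(r_1) = r_1^{\lambda}|w'(r_1)|^p/p' > 0$ at any first zero $r_1$, no such $r_1$ exists. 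Assuming now $w > 0$ everywhere with $L(a) = 0$, Proposition \ref{tim}(i) rules out $p > 2$, so $p < 2$; Proposition \ref{des} then yields $w \sim \ell r^{-\delta}$ (if $p_1 < p$), $w \sim c r^{-\eta}$ (if $p < p_1$), or logarithmic decay (at $p = p_1$), and one verifies that the exponents $\lambda - 2\delta$, $\lambda - p(\delta + 1)$, $\lambda - \delta(q+1)$, $\lambda - p - p\delta$ and their $\eta$-analogues are all negative under the stated hypotheses together with $(2 - p)\alpha < p$; thus $V(\infty) = 0$. Hence $V \equiv 0$ and $V' \equiv 0$, and a strict inequality in one of the terms of (\ref{dif}) --- the $|w|^{q+1}$-term if $q > q^{\ast}$ or $q > q_{\alpha}^{\ast}$, the $w^2$-term if $\alpha < N/2$ in (i), or the cross term in (iii) --- forces $w \equiv 0$, contradicting $w(0) = a$. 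In the degenerate subcase $q = q^{\ast}$, $\alpha = N/2$ of (i), one is left with $V' \equiv -(rw' + Nw/2)^2$, and $V' \equiv 0$ gives $w = Cr^{-N/2}$, incompatible with $w(0) = a$.

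In \emph{case (ii)}, $p \leq p_{2} \iff 2\delta \leq N$, so $\mathcal{U} \leq 0$ by the discussion after (\ref{hh}), and (\ref{wtp}) gives $W' \leq 0$ on $\mathbb{R}$. From $w(0) = a$ and the behavior of $|w'|^{p-2}w'$ near $0$ (Theorem \ref{exi}), $y(\tau), Y(\tau) \to 0$ and $e^{-\delta(q-1)\tau}|y|^{q+1} = r^{2\delta}|w|^{q+1} \to 0$ as $\tau \to -\infty$, giving $W(-\infty) = 0$. At any first zero $\tau_1 = \ln r_1$ of $y$, $W(\tau_1) = |Y(\tau_1)|^{p'}/p' > 0$, contradicting monotonicity of $W$. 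If instead $w > 0$ with $L(a) = 0$, Proposition \ref{des}(ii) (applicable since $p \leq p_{2} < p_{1}$) gives $w \sim cr^{-\eta}$, hence $y, Y \to 0$ at $+\infty$; the uniform bound $2\delta/\eta \leq (N-2)/N < 1 < q+1$ on $p \in (1, p_{2}]$ shows the extra term also vanishes, so $W(+\infty) = 0$. Then $W \equiv 0$ and $W' \equiv 0$ in (\ref{wtp}), which forces the strictly nonpositive term $-\tfrac{\delta(q-1)}{q+1}e^{-\delta(q-1)\tau}|y|^{q+1}$ to vanish, i.e.\ $y \equiv 0$, hence $w \equiv 0$; contradiction. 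The main obstacle is the algebraic verification of the sign conditions in (\ref{dif}) for these specific choices of $(\lambda, \sigma, e)$, together with the case-by-case check that $V$ (or $W$) vanishes at infinity in each of the fast-decay regimes of Proposition \ref{des}.
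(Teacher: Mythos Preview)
Your proof is correct and, for cases (i) and (iii), follows exactly the paper's approach: the same Pohozaev--Pucci--Serrin function $V_{\lambda,\sigma,e}$ with the same parameter choices $(\lambda,\sigma,e)=(N,(N-p)/p,\sigma+\alpha-N)$ and $(\lambda,\sigma,e)=(2\alpha,N-1-2\alpha/p',\sigma-\alpha)$, the same sign analysis of the five terms in (\ref{dif}), and the same endpoint argument $V(0)=0$, $V(\infty)=0$ to reach a contradiction. A minor remark: rather than hunting for a strictly negative term, the paper simply uses that $V'\equiv 0$ forces the perfect square $(rw'+\tfrac{\sigma-e+\alpha}{2}w)^2\equiv 0$, hence $r^{(\sigma-e+\alpha)/2}w$ is constant, which is incompatible with $w(0)=a>0$ unless $w\equiv a$; this handles all subcases uniformly without the degenerate--subcase bookkeeping you carry out.

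For case (ii) you take a genuinely different route: the paper again uses $V_{\lambda,\sigma,e}$, now with $\lambda=N=2\sigma$ and $e=\alpha-N/2$, obtaining (\ref{wac}); you instead use the Anderson--Leighton energy $W$ of (\ref{wt}), exploiting that $p\le p_2\Leftrightarrow 2\delta\le N$ makes $\mathcal{U}\le 0$ and hence $W$ nonincreasing. Both arguments are short; yours has the mild advantage that the strict negativity of the source term $-\tfrac{\delta(q-1)}{q+1}e^{-\delta(q-1)\tau}|y|^{q+1}$ in $W'$ immediately kills $y$, while the paper's $V$ argument goes through the square term. The paper in fact remarks (just after the proof) that case (ii) ``can be also obtained by using the energy function $W$ defined at (\ref{wt}) instead of $V$,'' so your alternative is explicitly anticipated.
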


\begin{proof}
We use the function $V=V_{\lambda,\sigma,e}$ defined at (\ref{vla}) , where
$\lambda>0,\sigma,e$ will be chosen after. It is continuous at $0$ and
$V_{\lambda,\sigma,e}(0)=0,$ from (\ref{pri}). Suppose that $w(r_{0})=0$ for
some first real $r_{0}>0.$ Then $V_{\lambda,\sigma,e}(r_{0})=r_{0}%
^{N}\left\vert w^{\prime}(r_{0})\right\vert ^{p}/p^{\prime}\geq0.$ Suppose
that for some $\lambda,\sigma,e$, the five terms giving $V^{\prime}$ are
nonpositive. Then $V\equiv V^{\prime}\equiv0$ on $\left[  0,r_{0}\right]  ,$
hence $rw^{\prime}+(\sigma-e+\alpha)w/2\equiv0,$ $r^{(\sigma-e+\alpha)/2}w$ is
constant, hence $w\equiv0$ if $\sigma-e+\alpha\neq0,$ or $w\equiv a$ if
$\sigma-e+\alpha=0.$ It is impossible since $w(0)\neq w(r_{0}).\medskip$

\noindent\textbf{Case (i)}. We take $\lambda=N$ and $\sigma=(N-p)/p$ and
$e=\sigma+\alpha-N,$ thus%
\begin{equation}
V(r)=r^{N}\left(  \frac{\left\vert w^{\prime}\right\vert ^{p}}{p^{\prime}%
}+\frac{\left\vert w\right\vert ^{q+1}}{q+1}+(\frac{N-p}{p}+\alpha
-N)\frac{w^{2}}{2}+\frac{N-p}{p}r^{-1}w\left\vert w^{\prime}\right\vert
^{p-2}w^{\prime}\right)  , \label{waa}%
\end{equation}%
\begin{equation}
r^{1-N}V^{\prime}(r)=-\left(  \frac{N-p}{p}-\frac{N}{q+1}\right)  \left\vert
w\right\vert ^{q+1}-\frac{N+2}{4p}(p-p_{2})\left(  N-2\alpha\right)
w^{2}-\left(  rw^{\prime}+\frac{N}{2}w\right)  ^{2} \label{wab}%
\end{equation}
and all the terms are nonpositive from our assumptions, thus $w>0$ on $\left[
0,\infty\right)  .$ Moreover suppose that $L(a)=0.$ Then $p<2,$ and from
Proposition \ref{tim}, $V(r)=O(r^{N-2\delta})$ as $r\rightarrow\infty,$ thus
$\lim_{r\rightarrow\infty}V(r)=0,$ since $N<2\delta$ from (\ref{ddn}). Then
$V\equiv0$ on $\left[  0,\infty\right)  $ which is a contradiction.\medskip

\noindent\textbf{Case (ii)}. We take $\lambda=N=2\sigma$ and $e=\alpha-N/2,$
thus
\begin{equation}
r^{1-N}V^{\prime}(r)=-\frac{N+2}{2p}(p_{2}-p)\left\vert w^{\prime}\right\vert
^{p}-\frac{N(q-1)}{2q+1}\left\vert w\right\vert ^{q+1}-\left(  rw^{\prime
}+Nw\right)  ^{2}, \label{wac}%
\end{equation}
and all the terms are nonpositive, and again $w>0$ on $\left[  0,\infty
\right)  .$ If $L(a)=0,$ we find $V(r)=O(r^{N-\eta})$ near $\infty,$ from
Proposition \ref{tim}, since $p\leq p_{2}<p_{1,}.$ Then $\lim_{r\rightarrow
\infty}V(r)=0,$ hence again a contradiction.\medskip

\noindent\textbf{Case (iii)}.\textbf{ }We take\textbf{ }$\lambda=2\alpha$ and
$\sigma=N-1-2\alpha/p^{\prime}$ and $e=\sigma-\alpha,$ thus
\[
r^{1-2\alpha}V^{\prime}(r)=-\left(  \sigma-\frac{2\alpha}{q+1}\right)
\left\vert w\right\vert ^{q+1}+\sigma(2\alpha-N)r^{-1}w\left\vert w^{\prime
}\right\vert ^{p-2}w^{\prime}-\left(  rw^{\prime}+\alpha w\right)  ^{2}.
\]
Here the first term is nonpositive from (\ref{etol}), and also the second
term, since $\sigma>0,$ $N/2\leq\alpha$ and $w^{\prime}<0$ on $\left(
0,r_{0}\right)  ,$ from Proposition \ref{pro}, hence again $w>0$ on $\left[
0,\infty\right)  .$ If $L(a)=0,$ then $p<2.$ From Proposition \ref{tim},
either $p_{1}<p$ and $V(r)=O(r^{2(\alpha-\delta)})$ near $\infty,$ where
$\alpha<\delta;$ or $p<p_{1}$ and $V(r)=O(r^{2(\alpha-\eta)})$, and
$\alpha<\delta<\eta$ from (\ref{dn}); or $p=p_{1}$ and $V(r)=O(\ln
r^{-(N+1)/2}).$ In any case $\lim_{r\rightarrow\infty}V(r)=0,$ hence again a contradiction.
\end{proof}

\begin{remark}
With no hypothesis on $p,$ if $w(r_{0})=0$ for some real $r_{0},$ then from
(\ref{waa}), (\ref{wab}),
\[
\left(  \frac{N-p}{p}-\frac{N}{q+1}\right)
%TCIMACRO{\dint \nolimits_{0}^{r_{0}}}%
%BeginExpansion
{\displaystyle\int\nolimits_{0}^{r_{0}}}
%EndExpansion
r^{N-1}\left\vert w\right\vert ^{q+1}dr+\frac{(N+2)p-2N}{4p}\left(
N-2\alpha\right)
%TCIMACRO{\dint \nolimits_{0}^{r_{0}}}%
%BeginExpansion
{\displaystyle\int\nolimits_{0}^{r_{0}}}
%EndExpansion
r^{N-1}w^{2}dr
\]%
\[
+%
%TCIMACRO{\dint \nolimits_{0}^{r_{0}}}%
%BeginExpansion
{\displaystyle\int\nolimits_{0}^{r_{0}}}
%EndExpansion
r^{N-1}\left(  rw^{\prime}+\frac{N}{2}w\right)  ^{2}dr=0
\]
As in \cite{PTW} such a relation can be extended to the nonradial case and
then applied to nonradial solutions $w.$
\end{remark}

\begin{remark}
Property (ii) was proved for equation (\ref{hog}) in \cite{QW}. It is new in
the general case. It can be also obtained by using the energy function $W$
defined at (\ref{wt}) instead of $V.$

The result (iii) is new. Is also true when $p=2:$ if $N/2<\alpha<N-1$ and
$q\geq q_{\alpha}^{\ast},$ where $q_{\alpha}^{\ast}=(3\alpha-N+1)/(N-1-\alpha
)>q^{\ast}$, we prove that all the solutions are ground states, with a slow
decay$.$In the case $p=2,$ $q=q^{\ast}$ it had been shown by variational
methods in \cite{EK} that there exist ground states with a fast decay$,$
whenever $N/2<\alpha<N$ when $N\geq4,$ or if $2<\alpha<3$ when $N=3$; moreover
from \cite{AP}, they do not exist when $1<\alpha\leq2.$ Apparently nothing was
known beyond the critical case.
\end{remark}

\begin{remark}
If $1<p\leq p_{1},$ then the condition $\alpha<(N-1)p^{\prime}/2$ is always
satisfied, since $\alpha<\delta\leq N\leq(N-1)p^{\prime}/2.$ If $p_{1}<p,$ our
conditions imply $\alpha<N,$ which was a necessary condition in order to get
positive solutions, from Proposition \ref{zer}.
\end{remark}

\subsection{Oscillation or nonoscillation criteria}

Our next result concerns the case $p<2,$ and $N\leq\alpha,$ thus $N\leq
\alpha<\delta$ from (\ref{log}), where there exists no positive solutions: all
the solutions are changing sign. It is new, and uses the ideas of \cite{Bi1}
for the problem without source (\ref{hog}). It involves the coefficient
$\alpha^{\ast}$ defined at (\ref{eto}), which here satisfies $\alpha^{\ast
}<\delta,$ and the energy function $W$ defined at (\ref{wtp}); we use the
notations $\mathcal{W},\mathcal{U},\mathcal{H},\mathcal{L},\mathcal{S}$ of
Section \ref{S21}.

\begin{theorem}
\label{osci}Assume (\ref{log}), $p<2,$ and $N\leq\alpha$.

\noindent(i) If $\alpha<\alpha^{\ast},$ then any solution $w(.,a)$ $(a\neq0)$
has a finite number of zeros.

\noindent(ii) There exists $\overline{\alpha}\in\left(  \max(N,\alpha^{\ast
}),\delta\right)  $ such that for any $\alpha\in\left(  \overline{\alpha
},\delta\right)  $, any solution $w(.,a)$ has a infinity of zeros.
\end{theorem}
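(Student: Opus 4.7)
The plan is to work in the logarithmic variable $\tau=\ln r$ via the substitution (\ref{cha}), which brings (\ref{un}) to the non-autonomous system (\ref{sys}). Since $\alpha<\delta$, the factor $e^{-\delta(q-1)\tau}$ tends to $0$, so (\ref{sys}) is an asymptotically vanishing perturbation of the autonomous system (\ref{aut}) whose phase portrait is the one studied in \cite{Bi1}. Zeros of $w$ correspond exactly to sign changes of $y$, equivalently to the orbit $(y(\tau),Y(\tau))$ crossing the $Y$-axis. The hypotheses $p<2$ and $N\le\alpha<\delta$ give $\delta(\delta-N)(\delta-\alpha)>0$, so the three stationary points $(0,0)$, $M_\ell$, $M_\ell'$ of (\ref{sta}) are present, $2\delta>N$ so the set $\mathcal{S}$ is bounded, and the energy $W$ from (\ref{wt}) satisfies
\[
W'(\tau)\le -\tfrac{\delta(q-1)}{q+1}e^{-\delta(q-1)\tau}|y(\tau)|^{q+1}\quad\text{whenever }(y(\tau),Y(\tau))\notin\mathcal{S}.
\]
By Propositions \ref{der}--\ref{uti} and Lemma \ref{com}(iv), $(y,Y)$ stays bounded on $[\tau_{0},\infty)$.

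For (i), the first step is to check that $\alpha<\alpha^{\ast}$ is equivalent to $M_\ell\in\mathcal{S}$. Using the second branch of (\ref{hh}) and (\ref{ell}) one computes $\mathcal{H}(M_\ell)=(\delta\ell)^{2-p}/(p-1)=\delta(\delta-N)/((p-1)(\delta-\alpha))$, and comparing this with $2\delta-N$ gives exactly the condition $\alpha<\alpha^{\ast}$ defined by (\ref{eto}). Under this condition, the description of (\ref{aut}) in \cite{Bi1} states that every bounded non-trivial orbit converges to $M_\ell$, $M_\ell'$, or to $(0,0)$ along a trajectory on which $y$ has constant sign. I would then run a LaSalle-type argument for $W$: the $\omega$-limit set of $(y(\tau),Y(\tau))$ is non-empty, compact, connected, and invariant under (\ref{aut}), so it is contained in $\{(0,0),M_\ell,M_\ell'\}$ and, by connectedness, reduces to a single point. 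In each case $y(\tau)$ keeps a constant sign for large $\tau$, so $w$ has only finitely many zeros.

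For (ii), when $\alpha$ is close enough to $\delta$, the equilibrium $M_\ell$ lies well outside $\overline{\mathcal{S}}$, and the autonomous analysis of (\ref{aut}) in \cite{Bi1} produces a threshold $\overline{\alpha}\in(\max(N,\alpha^{\ast}),\delta)$ above which every bounded non-trivial orbit of (\ref{aut}) winds around $(0,0)$ infinitely many times, with angular speed uniformly bounded below on any annulus avoiding $(0,0)$ and the equilibria $M_\ell,M_\ell'$. To transfer this rotation to (\ref{sys}), I would first confine the orbit in such an annulus: the upper bound on $|y|+|Y|$ comes from Lemma \ref{com}(iv), and the lower bound from the fact that $W$ remains bounded below away from $0$ (otherwise $(y,Y)\to(0,0)$, which contradicts, via (\ref{sys}), the rotational character of the autonomous limit). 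Since the non-autonomous perturbation in (\ref{sys}) is $O(e^{-\delta(q-1)\tau})$, for $\tau$ large the perturbed angular speed is at least half the autonomous one, so the orbit performs infinitely many full turns around $(0,0)$ and $y$ changes sign infinitely often.

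The main obstacle lies in (ii): preventing the perturbed orbit from drifting into a neighborhood of $(0,0)$, where the non-smoothness of $\Psi(Y)=|Y|^{(2-p)/(p-1)}Y$ at $Y=0$ prevents a clean lower bound on the angular speed. This requires a precise lower bound on $W$, combined with a careful estimate of the error created by the non-autonomous source term in (\ref{wtp}), ensuring that the orbit remains at a positive distance from the origin for all large $\tau$. Once this is in place, both parts of the theorem reduce to well-controlled perturbation arguments on top of the autonomous dynamical picture inherited from \cite{Bi1}.
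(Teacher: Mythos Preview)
Your overall framework (logarithmic variable, system (\ref{sys}), energy $W$, the role of $\mathcal{S}$ and of $\alpha^{\ast}$) is the right one, and your computation that $\alpha<\alpha^{\ast}$ is equivalent to $M_\ell\in\mathcal{S}$ matches the paper. But both parts of your sketch diverge from the paper's actual arguments, and in each part there is a real gap.

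\textbf{Part (i).} Your LaSalle/$\omega$-limit argument does not close. Even granting that the $\omega$-limit set of the perturbed orbit is a single equilibrium (which itself would require ruling out heteroclinic chains in the autonomous picture), convergence $(y,Y)\to(0,0)$ does \emph{not} imply that $y$ eventually has constant sign: the orbit could spiral into the origin with infinitely many sign changes. This is precisely the scenario the paper has to exclude, and it does so by a completely different route. The paper first uses Lemma \ref{com}(iv) to trap the orbit in a square $\mathcal{K}_{\ell+\varepsilon}\subset\mathcal{S}$, then introduces the monotone quantity $\Psi(\tau)=W(\tau)-\tfrac{\delta(q-1)}{q+1}\int_\tau^\infty e^{-\delta(q-1)s}|y|^{q+1}\,ds$ with $\Psi'=\mathcal{U}(y,Y)\ge0$, and obtains from it the lower bound $\Psi'\ge c\,y'^{\,2}$, hence $y'\in L^2$ and $y'\to0$. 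Assuming a sequence of zeros $(\tau_n)$ then forces $\mathcal{W}(y,Y)\to0$ and $y\to0$, and from $\Psi\le0$ one bootstraps $y(\tau)=O(e^{-k\tau})$ for every $k>0$. The contradiction is finally obtained by plugging this super-fast decay into the maximum-point identity (\ref{phis}) with $d=\eta/2$. None of this is captured by an $\omega$-limit argument; the exclusion of oscillation around $(0,0)$ is the heart of the proof and needs this quantitative decay step.

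\textbf{Part (ii).} Your angular-speed strategy is quite different from the paper's and, as you yourself note, runs into a genuine obstacle near $(0,0)$ where the vector field is not $C^1$. The paper avoids this entirely by a contradiction argument based on the \emph{size} of $k_\ell:=\mathcal{W}(M_\ell)$. If $w$ had finitely many zeros, then by Propositions \ref{alp} and \ref{des} either $r^{\delta}w\to\ell$ or $r^{\alpha}w\to L>0$. In either case one shows $W$ has a local maximum $\tau_0$ with $W(\tau_0)>k_\ell$, and at such a point $W'(\tau_0)=0$ forces $(y(\tau_0),Y(\tau_0))\in\mathcal{S}$. But $\mathcal{S}$ is bounded by a constant depending only on $(N,p)$, so $W(\tau_0)\le K(N,p)+C^{q+1}/(q+1)$. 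Since $k_\ell=M(N,p)/(\delta-\alpha)^{\delta}\to\infty$ as $\alpha\uparrow\delta$, this yields the threshold $\overline{\alpha}$. No rotation estimate, and no lower bound on the distance to $(0,0)$, is needed. Your approach might be salvageable, but as written it does not supply the missing lower barrier, whereas the paper's argument sidesteps the difficulty altogether.
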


\begin{proof}
(i) Suppose $N\leq\alpha<\alpha^{\ast}$ (which implies $p>3/2).$ In the phase
plane $(y,Y)$ of system (\ref{sys}), the stationary point $M_{\ell}$ is in the
domain $\mathcal{S}$ of boundary $\mathcal{L}.$ Indeed denote $P_{\mu}%
=(\mu,(\delta\mu)^{p-1})$ for any $\mu>0.$ Setting $\lambda=\delta
^{-1}((2\delta-N)(p-1))^{1/(2-p)},$ the point $P_{\lambda}$ is on the curve
$\mathcal{L}.$ Then $(\theta\lambda,(\theta\delta\lambda)^{p-1})\in
\mathcal{S}$ for any $\theta\in\left[  0,1\right)  $, and $\alpha<\alpha
^{\ast}\Leftrightarrow\ell<\lambda,$ thus $P_{\ell}=M_{\ell}\in\mathcal{S},$
and there exists $\varepsilon\in\left(  0,1\right]  $ such that $P_{\ell
+\varepsilon}\in\mathcal{S}.$ Now for any $\mu>0$ such that $P_{\mu}%
\in\mathcal{S},$ the square $\mathcal{K}_{\mu}=\left\{  (y,Y)\in\mathbb{R}%
^{2}:\left\vert y\right\vert \leq\mu,\left\vert Y\right\vert \leq(\delta
\mu)^{p-1}\right\}  $ is contained in $\mathcal{S}.$ Indeed $\mathcal{H}%
(\mu,(\delta\mu)^{p-1})=(\delta\mu)^{2-p}/(p-1),$ and for any $\xi,\zeta
\in\left[  -1,1\right]  $
\[
\mathcal{H}(\xi\mu,\zeta(\delta\mu)^{p-1})=(\delta\mu)^{2-p}\frac
{\xi-\left\vert \zeta\right\vert ^{(2-p)/(p-1)}}{\left\vert \xi\right\vert
^{(2-p)/(p-1)}-\zeta}\leq\mathcal{H}(\mu,(\delta\mu)^{p-1}),
\]
since the quotient is majorized by $1/(p-1)$ if $\xi\zeta>0,$ and by $1$ if
$\xi\zeta<0,$ because $p>3/2.$ From Lemma \ref{com},iv, $(y\left(
\tau\right)  ,Y\left(  \tau\right)  )\in\mathcal{K}_{\ell+\varepsilon}$ for
$\tau\geq\tau\left(  \varepsilon\right)  $ large enough, thus $(y\left(
\tau\right)  ,Y\left(  \tau\right)  )\in\mathcal{S}$. Thus $\mathcal{U}%
(y\left(  \tau\right)  ,Y\left(  \tau\right)  )\geq0.$ Consider the function
\begin{equation}
\tau\mapsto\Psi(\tau)=W\left(  \tau\right)  -\frac{\delta(q-1)}{q+1}%
%TCIMACRO{\dint \limits_{\tau}^{\infty}}%
%BeginExpansion
{\displaystyle\int\limits_{\tau}^{\infty}}
%EndExpansion
e^{-\delta(q-1)s}\left\vert y(s)\right\vert ^{q+1}ds. \label{pss}%
\end{equation}
We find
\begin{equation}
\Psi^{\prime}(\tau)=W^{\prime}\left(  \tau\right)  +\frac{\delta(q-1)}%
{q+1}e^{-\delta(q-1)\tau}\left\vert y(\tau)\right\vert ^{q+1}=\mathcal{U}%
(y(\tau),Y(\tau)). \label{pst}%
\end{equation}
Then $\Psi$ is nondecreasing and bounded near $\infty$, thus it has a limit
$\kappa,$ and $W$ has the same limit. And $\mathcal{H}(y,Y)\leq\mathcal{H}%
(\ell+\varepsilon,(\delta(\ell+\varepsilon))^{p-1})=2\delta-N-m,$ for some
$m=m(\varepsilon)>0$, thus
\[
\Psi^{\prime}\left(  \tau\right)  =\mathcal{U}(y\left(  \tau\right)  ,Y\left(
\tau\right)  )\geq m\left(  \delta y-\left\vert Y\right\vert ^{(2-p)/(p-1)}%
Y\right)  \left(  \left\vert \delta y\right\vert )^{p-2}\delta y-Y\right)  .
\]
Now there exists a constant $c=c(p)$ such that for any ($a,b)\in\mathbb{R}%
^{2}\backslash\left\{  (0,0)\right\}  ,$%
\[
\left(  a-b\right)  \left(  \left\vert a\right\vert ^{p-2}a-\left\vert
b\right\vert ^{p-2}b\right)  \geq c(\left\vert a\right\vert +\left\vert
b\right\vert )^{p-2}(a-b)^{2},
\]
thus from (\ref{sys}),
\[
\Psi^{\prime}\left(  \tau\right)  \geq mc\left(  2\delta(\ell+1\right)
)^{p-2}y^{\prime^{2}}(\tau).
\]
Then $y^{\prime^{2}}$ is integrable and bounded; then $\lim_{\tau
\rightarrow\infty}y^{\prime}\left(  \tau\right)  =0.$ Suppose that $y$ admits
an increasing sequence of zeros $\left(  \tau_{n}\right)  $. Then $W(\tau
_{n})=\left\vert Y(\tau_{n})\right\vert ^{p^{\prime}}/p^{\prime}=\left\vert
y^{\prime}(\tau_{n})\right\vert ^{p}/p^{\prime},$ thus $\lim_{\tau
\rightarrow\infty}W\left(  \tau\right)  =0,$ thus $\lim_{\tau\rightarrow
\infty}\mathcal{W}(y(\tau),Y(\tau))=0$. Moreover $\left\vert Y\right\vert
^{(2-p)/(p-1)}Y=\delta y-y^{\prime}=\delta y+o(1),$ thus
\[
\mathcal{W}(y(\tau),Y(\tau))=\frac{(\delta-N)\delta^{p-1}}{p}\left\vert
y(\tau)\right\vert ^{p}-\frac{\delta-\alpha}{2}y^{2}(\tau)+o(1),
\]
which implies $\lim y\left(  \tau\right)  =0$ or $\pm\ell,$ and necssarily
$\lim_{\tau\rightarrow\infty}y\left(  \tau\right)  =0.$ And $\lim
_{\tau\rightarrow\infty}\Psi\left(  \tau\right)  =0,$ thus $\Psi(\tau)\leq0$
near $\infty$, thus
\[
\frac{(\delta-N)\delta^{p-1}}{p}\left\vert y(\tau)\right\vert ^{p}%
-\frac{\delta-\alpha}{2}y^{2}\leq\mathcal{W}(y(\tau),Y(\tau))\leq\frac
{\delta(q-1)}{q+1}%
%TCIMACRO{\dint \limits_{\tau}^{\infty}}%
%BeginExpansion
{\displaystyle\int\limits_{\tau}^{\infty}}
%EndExpansion
e^{-\delta(q-1)s}\left\vert y(s)\right\vert ^{q+1}ds.
\]
Then $y(\tau)=O(e^{-k_{0}\tau}),$ with $k_{0}=$ $\delta(q-1)/p.$ Assuming that
$y(\tau)=O(e^{-k_{n}\tau}),$ then we find $y(\tau)=O(e^{-k_{n+1}\tau})$ with
$k_{n+1}=k_{n}(q+1)/p+(q-1)/(2-p).$ Since $q>1>p-1,$ it follows that
$y(\tau)=O(e^{-k\tau})$ for any $k>0.$ Consider the substitution
(\ref{cge})for some $d>0.$ Then $y_{d}(\tau)=O(e^{-k\tau})$ for any $k>0.$ At
any maximal point of $\left\vert y_{d}\right\vert $ we find from (\ref{phis})%
\[
(p-1)d(\eta-d)\leq e^{((p-2)d+p)\tau}\left\vert dy_{d}\right\vert
^{2-p}\left(  (\alpha-d)+e^{-d(q-1)\tau}\left\vert y_{d}\right\vert
^{q-1}\right)
\]
Choosing for example $d=\eta/2$ we get a contradiction since the right-hand
sign tends to $0.$\medskip

\noindent(ii) Suppose $N\leq\alpha$ and $\alpha^{\ast}<\alpha.$ Assume that
there exists a solution $w$ with a finite number of zeros. We can assume that
$w(r)>0$ near $\infty.$ From Propositions \ref{alp} and \ref{des}, either
$\lim_{r\rightarrow\infty}r^{\alpha}w=L>0$ or $\lim_{r\rightarrow\infty
}r^{\delta}w=\ell.$ Now the point $M_{\ell}$ is exterior to $\mathcal{S},$
thus $\mathcal{U}(M_{\ell})<0,$ and by computation
\begin{equation}
k_{\ell}:=\mathcal{W}M_{\ell}=\frac{1}{2}\left(  \delta-N\right)  \delta
^{p-2}\ell^{p}=\frac{M}{\left(  \delta-\alpha\right)  ^{\delta}}>0. \label{kl}%
\end{equation}
where $M=M(N,p)=\left(  \delta-N\right)  ^{\delta+1}\delta^{p-2+(p-1)\delta
}/2.\medskip$

$\bullet$ First case: $\lim_{r\rightarrow\infty}r^{\delta}w=\ell.$ Then
$\lim_{\tau\rightarrow\infty}(y(\tau),Y(\tau))=M_{\ell}.$ Thus for large
$\tau,$ $\mathcal{U}(y(\tau),Y(\tau))<0,$ so that $W^{\prime}(\tau)<0.$ Then
$W$ is decreasing, and $\lim_{\tau\rightarrow\infty}W(\tau)=\lim
_{\tau\rightarrow-\infty}\mathcal{W}(y(\tau),Y(\tau))=$ $k_{\ell}.$ Moreover
near $-\infty,$ we find $\lim_{\tau\rightarrow-\infty}W(\tau)=\lim
_{\tau\rightarrow-\infty}\mathcal{W}(y(\tau),Y(\tau))=0;$ indeed near
$-\infty,$ $y(\tau)=O(e^{\delta\tau})$ and $Y(\tau)=O(e^{\delta\tau})$ from
(\ref{pri}) and (\ref{cha}), hence $e^{-\delta(q-1)\tau}\left\vert
y(\tau)\right\vert ^{q+1}=O(e^{2\delta\tau})$. Then $W$ has at least a maximum
point $\tau_{0}$ such that $W(\tau_{0})>k_{\ell}.$ At such a point,
$W^{\prime}(\tau_{0})=0,$ then $\mathcal{U}(y(\tau_{0}),Y(\tau_{0}))>0,$ thus
($y(\tau_{0}),Y(\tau_{0}))\in\mathcal{S}.$ Let $C=\max_{(y,Y)\in
\overline{\mathcal{S}}}(\left\vert y\right\vert +\left\vert Y\right\vert ),$
thus $C=C(N,p)$ and from (\ref{curl}) and (\ref{curi}), and $\max
_{(y,Y)\in\overline{\mathcal{S}}}\mathcal{W}(y,Y)\leq K=K(N,p),$ since
$\alpha-\delta<0.$Then%
\[
k_{\ell}<W(\tau_{0})\leq K+\frac{C^{q+1}}{q+1}%
\]
From (\ref{kl}), it implies that $\delta-\alpha$ is not close to $0.$ More
precisely, there exists $\overline{\alpha}=\overline{\alpha}(N,p)>\max
(N,\alpha^{\ast})$ such that $\alpha\leq\overline{\alpha}.\medskip$

$\bullet$ Second case: $\lim_{r\rightarrow\infty}r^{\alpha}w=L>0.$ It follows
that $\lim_{\tau\rightarrow\infty}e^{\left(  \alpha-\delta\right)  \tau}y=L,$
and $\lim_{\tau\rightarrow\infty}e^{\left(  \alpha-\delta\right)  \tau
}Y=\left(  \alpha L\right)  ^{1/(p-1)},$ from (\ref{imi}). Then $Y(\tau
)=O(y^{p-1}(\tau))$ near $\infty,$ thus%
\[
\mathcal{W}(y(\tau),Y(\tau))+\frac{\delta-\alpha}{2}y^{2}(\tau)=O(y^{p}%
(\tau)),
\]%
\[
W(\tau)+\frac{\delta-\alpha}{2}y^{2}(\tau)=O(y^{p}(\tau))+O(e^{-\delta
(q-1)\tau}y^{q+1}(\tau))=O(y^{p}(\tau))+O(y^{2-\alpha(q-1)/(\delta-\alpha
)}(\tau));
\]
thus $\lim_{\tau\rightarrow\infty}\mathcal{W}(y(\tau),Y(\tau))=\lim
_{\tau\rightarrow\infty}W(\tau)=-\infty;$ and again $\lim_{\tau\rightarrow
-\infty}\mathcal{W}(y(\tau),Y(\tau))=0.$ From \cite[Lemma 4.3]{Bi1} we know
the shape of the level curves $\mathcal{C}_{k}=\left\{  \mathcal{W}%
(y,Y)=k\right\}  :$ either $k>k_{\ell}$ and $\mathcal{C}_{k}$ has two
unbounded connected components, or $0<k<k_{\ell}$ and $\mathcal{C}_{k}$ has
three connected components and one of them is bounded, or $k=k_{\ell}$ and
$\mathcal{C}_{k_{\ell}}$ is connected with a double point at $M_{\ell}$, or
$k=0$ and one of the three connected components of $\mathcal{C}_{0}$ is
$\left\{  \left(  0,0\right)  \right\}  ,$ or $k<0$ and $\mathcal{C}_{k}$ has
two unbounded connected components. As a consequence there exists $\tau_{1}$
such that $\mathcal{W}(y(\tau_{1}),Y(\tau_{1}))=k_{\ell};$ then again
$W(\tau_{1})>k_{\ell}.$ Thus $W$ has at least a maximum point $\tau_{0}$ such
that $W(\tau_{0})>k_{\ell},$ and the conclusion follows as above.
\end{proof}

\section{The case $p\leq(2-p)\alpha$ \label{S4}}

In this section we assume that $p\leq(2-p)\alpha,$ that means $p<2$ and
$\delta\leq\alpha.$

\subsection{Behaviour near infinity}

From Proposition \ref{uti}, we deduce approximate estimates near $\infty$%
\begin{equation}
w(r)=o(r^{-\gamma}),\quad\text{for any }\gamma<\delta. \label{hij}%
\end{equation}
However it is not straightforward to obtain exact estimates, and they can be
false, see Proposition \ref{dis} below. Here again the key point is the use of
enegy function $W$ defined at (\ref{wt}).

\begin{proposition}
\label{tom}Assume $q>1,p<2,$ and $\delta<\alpha,$ or $N\leq\alpha=\delta$.
Then any solution $w$ of problem (\ref{un}) satisfies
\begin{equation}
w(r)=O(r^{-\delta}),\qquad w^{\prime}(r)=O(r^{-\delta-1})\qquad\text{near
}\infty. \label{rd}%
\end{equation}

\end{proposition}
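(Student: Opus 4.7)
The plan is to exploit the energy function $W(\tau)$ of (\ref{wt}) together with the logarithmic substitution (\ref{cha}), and reduce the assertion to showing that $(y(\tau), Y(\tau))$ is bounded on $\mathbb{R}$: then $w(r) = r^{-\delta} y(\tau) = O(r^{-\delta})$ is immediate, and $w'(r) = O(r^{-\delta-1})$ follows by applying Proposition \ref{der}(ii) with $d = \delta$.

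The first step is to prove that $W$ is bounded from above on $\mathbb{R}$. The initial data $w(0) = a$, $w'(0) = 0$ yield $y(\tau), Y(\tau) \to 0$ and $e^{-\delta(q-1)\tau}|y|^{q+1} = r^{2\delta}|w|^{q+1} \to 0$ as $\tau \to -\infty$, so $W(-\infty) = 0$. If $2\delta \leq N$ then $\mathcal{U} \leq 0$ everywhere and $W' \leq 0$, so $W \leq 0$. If $2\delta > N$, the set $\mathcal{S}$ of (\ref{sos}) is bounded and $W' \leq 0$ outside $\mathcal{S}$. On $\overline{\mathcal{S}}$, $\mathcal{W}$ is bounded by compactness, and $\frac{1}{q+1} e^{-\delta(q-1)\tau}|y|^{q+1}$ is bounded uniformly in $\tau$ on the set $\{\tau : (y(\tau),Y(\tau)) \in \overline{\mathcal{S}}\}$ (it tends to $0$ at both $\pm\infty$); call this bound $C_1$. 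For any $\tau^*$ with $(y(\tau^*), Y(\tau^*)) \notin \mathcal{S}$, let $\tau_1$ be the largest $\tau \leq \tau^*$ at which the trajectory lies in $\overline{\mathcal{S}}$, with $\tau_1 = -\infty$ if no such time exists; since $W$ is nonincreasing on $(\tau_1, \tau^*]$, one obtains $W(\tau^*) \leq W(\tau_1) \leq \max(0, C_1)$.

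The second step is to extract boundedness of $(y, Y)$ from $\mathcal{W}(y,Y) \leq W \leq K := \max(0, C_1)$, using coercivity of $\mathcal{W}$. If $\delta < \alpha$, the positive coefficient $(\alpha - \delta)/2$ of $y^2$, together with Young's inequality $|\delta yY| \leq (2p')^{-1}|Y|^{p'} + C(p,\delta)|y|^p$ and the bound $|y|^p \leq \varepsilon y^2 + C(\varepsilon)$ (valid for $p<2$), gives $\mathcal{W}(y,Y) \geq c(y^2 + |Y|^{p'}) - C'$. If $\alpha = \delta$ and $\delta > N$ strictly, the $y^2$ term vanishes but the $|y|^p$ coefficient $(2\delta-N)\delta^{p-1}/p$ strictly exceeds $\delta^p/p$; the rescaled Young inequality $|\delta yY| \leq \mu^p \delta^p |y|^p/p + |Y|^{p'}/(p'\mu^{p'})$ applied with $\mu \in (1, ((2\delta-N)/\delta)^{1/p})$ (a nonempty interval iff $\delta > N$) yields $\mathcal{W} \geq c(|y|^p + |Y|^{p'})$. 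In both subcases $|y|, |Y|$ are bounded.

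The delicate boundary case $\alpha = \delta = N$ (that is, $p = p_1$) requires an additional ingredient, as coercivity of $\mathcal{W}$ degenerates along the curve $Y = \delta^{p-1}|y|^{p-2}y$ where Young's inequality is saturated. Here we exploit the special structure $\alpha = N$: equation (\ref{jpn}) reads $J_N'(r) = -r^{N-1}|w|^{q-1}w$, and (\ref{ret}) with $\delta = N$ gives $J_N(\tau) = y(\tau) - Y(\tau)$. Picking $\gamma \in (N/q, \delta)$, which is nonempty because $q > 1$, the a priori estimate (\ref{hij}) yields $|w(r)| \leq Cr^{-\gamma}$ for large $r$, so $\int_0^\infty s^{N-1}|w|^q\, ds$ converges and $|y - Y|$ is uniformly bounded by some $M$. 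Writing $Y = y - c$ with $|c| \leq M$, one has $\mathcal{W}(y, y-c) = \delta^p|y|^p/p + |y-c|^{p'}/p' - \delta y(y-c) \to +\infty$ as $|y| \to \infty$, uniformly in $|c| \leq M$, because $p' > 2$ makes the $p'$-power dominate the quadratic term. The bound $\mathcal{W}(y,Y) \leq K$ thus forces $|y|$, and hence $|Y|$, to be bounded. The main obstacle is precisely this boundary case: the global $W$-bound and the independent $J_N$-integral bound must be combined, since neither tool alone suffices.
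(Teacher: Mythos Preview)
Your argument is correct and follows the paper's strategy exactly: bound the energy $W$ from above, then extract boundedness of $(y,Y)$ from coercivity of $\mathcal{W}$. Two remarks are in order.

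First, the proposition concerns \emph{any} solution of (\ref{un}), not only solutions of the Cauchy problem (\ref{ini}). Your upper bound on $W$ invokes $W(-\infty)=0$, which relies on $w(0)=a$, $w'(0)=0$; for a solution defined only on $(R_w,\infty)$ this is unavailable. The fix is immediate (and is what the paper does): pick any $\tau_0$ in the domain and take
\[
k=\max\!\Bigl(W(\tau_0),\ \sup_{\overline{\mathcal S}}\mathcal{W}+\tfrac{B^{q+1}}{q+1}e^{-\delta(q-1)\tau_0}\Bigr),
\qquad B:=\sup_{\overline{\mathcal S}}|y|,
\]
so that $W(\tau)\le k$ for all $\tau\ge\tau_0$ by your dichotomy argument.

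Second, your separate treatment of the boundary case $\alpha=\delta=N$ is not merely pedantic; it actually goes beyond the paper. The paper's decomposition
\[
W=\frac{(\delta-N)\delta^{p-1}}{p}|y|^p+\Phi(y,Y)+\frac{1}{q+1}e^{-\delta(q-1)\tau}|y|^{q+1},\qquad \Phi\ge 0,
\]
forces $|y|$ bounded only when $\delta>N$ strictly: at $\delta=N$ the leading coefficient vanishes and $\{\Phi\le k\}$ is an unbounded neighbourhood of the curve $Y=|\delta y|^{p-2}\delta y$, so ``Thus $y$ is bounded'' does not follow directly. Your use of the identity $J_N=y-Y$ (via (\ref{ret}) with $\delta=N$) together with $J_N'=-r^{N-1}|w|^{q-1}w$ and the integrability of $r^{N-1}|w|^q$ (from (\ref{hij}) with $\gamma\in(N/q,N)$) supplies the missing transversal constraint $|y-Y|\le M$; the coercivity of $\mathcal{W}(y,y-c)$ as $|y|\to\infty$, uniformly in $|c|\le M$ (since $p'>2$), then closes the argument. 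This patches a genuine gap in the paper's proof at this edge case.
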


\begin{proof}
\textbf{(i) Case }$\delta<\alpha.$\textbf{ }

$\bullet$ First assume that $2\delta\leq N,$ that means $p\leq p_{2}.$ Then
from (\ref{wtp}), $W^{\prime}(\tau)\leq0$ for any $\tau;$ hence $W$ is bounded
from above near $\infty,$ and in turn $y$ and $Y$ are bounded, because
$\delta<\alpha$ and $p<2.$ Thus (\ref{rd}) holds.$\medskip$

$\bullet$ Then assume $N<2\delta.$ Let $\tau_{0}$ be arbitrary. Since
$\mathcal{S}$ is bounded, there exists $k>0$ large enough such that
$W(\tau)\leq k$ for any $\tau\geq\tau_{0}$ such that $(y(\tau),Y(\tau))\in$
$\mathcal{S},$ and we can choose $k>W(\tau_{0});$ and $W^{\prime}(\tau)\leq0$
for any $\tau\geq\tau_{0}$ such that $(y(\tau),Y(\tau))\not \in $
$\mathcal{S}.$ Then $W(\tau)\leq k$ for any $\tau\geq\tau_{0},$ hence again
$y$ and $Y$ are bounded for $\tau\geq\tau_{0}.\medskip$

\noindent\textbf{(ii) Case }$N\leq\alpha=\delta.$ Since $N<2\delta,$ as above
$W$ is bounded from above for large $\tau.$ We can write $W$ under the form%
\[
W(\tau)=\frac{(\delta-N)\delta^{p-1}}{p}\left\vert y(\tau)\right\vert
^{p}+\Phi(y(\tau),Y(\tau))+\frac{1}{q+1}e^{-\delta(q-1)\tau}\left\vert
y(\tau)\right\vert ^{q+1},
\]
where
\[
\Phi(y,Y)=\frac{\left\vert Y\right\vert ^{p^{\prime}}}{p^{\prime}}-\delta
yY+\frac{\left\vert \delta y\right\vert ^{p}}{p}\geq0,\qquad\forall
(y,Y)\in\mathbb{R}^{2}.
\]
Thus $y$ is bounded, then also $Y$ from H\"{o}lder inequality. $\medskip$
\end{proof}

\begin{remark}
Under the assumptions of Proposition \ref{tom}, we can improve the estimate
(\ref{rd}) for the global solutions: there exists a constant $C=C(N,p)$
\textit{independent on} $a,$ such that all the solutions $w(.a)$ of
(\ref{un}), (\ref{ini}) satisfy
\begin{equation}
\left\vert w(r,a)\right\vert \leq Cr^{-\delta},\qquad\text{for any }r>0.
\label{hol}%
\end{equation}
Indeed let $w$ be any solution. Then $\lim_{\tau\rightarrow-\infty}%
y(\tau)=\lim_{\tau\rightarrow-\infty}Y(\tau)=0,$ thus $\lim_{\tau
\rightarrow-\infty}W(\tau)=0.$ If $2\delta\leq N,$ then $W(\tau)\leq0$ for any
$\tau,$ which gives an upper bound for $y$ independent on $a.$ The same
happens in case $2\delta>N:$ $\mathcal{S}$ is interior to some curve
$\mathcal{W}(y,Y)=k,$ with $k$ independent on $a,$ and $W(\tau)\leq k,$ for
any $\tau$. Thus (\ref{hol}) holds. As a consequence, Then $\left\vert
w(r,a)\right\vert \leq\max(C,a)2^{\delta}(1+r)^{-\delta}$ for any $r>0,$ from
Theorem \ref{exi}.
\end{remark}

The case $\alpha=\delta<N$ is not covered by Proposition \ref{tom}. In fact
(\ref{rd}) is not satisfied, because a logarithm appears:\ 

\begin{proposition}
\label{ego}Assume$q>1,p<2,$ and $\alpha=\delta<N.$ Then any solution $w$ of
(\ref{un})satisfies
\begin{equation}
w=O(r^{-\delta}(\ln r)^{1/(2-p)})\qquad\text{near }\infty. \label{lli}%
\end{equation}

\end{proposition}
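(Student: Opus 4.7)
After the substitution (\ref{cha}), the claim (\ref{lli}) is equivalent to $|y(\tau)| \le C\tau^{1/(2-p)}$ for $\tau$ large. Proposition~\ref{uti} only gives the crude bound $y(\tau) = o(e^{(\delta-\gamma)\tau})$ for every $\gamma < \delta$, so the specific structure of (\ref{sys}) in the borderline regime $\alpha = \delta$ must be exploited. First I reduce to the constant-sign case: since $p < 2$ and $\delta < N$, relation (\ref{dn}) gives $\eta > N > \delta = \alpha$, hence $\alpha < \max(N,\eta)$ and Proposition~\ref{zer}(iv) applies; $w$ has finitely many zeros, so WLOG $w > 0$ and $w' < 0$ for $r$ large, and $y > 0$, $Y > 0$ eventually.

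The energy argument of Proposition~\ref{tom} fails precisely because the coercive term $\tfrac{\alpha-\delta}{2}y^2$ in $\mathcal{W}$ disappears when $\alpha = \delta$; correspondingly the nontrivial rest points $M_\ell$ of (\ref{aut}) escape to infinity as $\alpha \to \delta^{+}$ (the quantity $\ell$ in (\ref{ell}) blows up). Heuristically the trajectory $(y,Y)$ then hugs the slow manifold $\{Y = (\delta y)^{p-1}\}$, on which $y' \equiv 0$ by the first line of (\ref{sys}); dropping the exponentially small source term, the second line gives along the manifold $\tfrac{d}{d\tau}(\delta y)^{p-1} \approx (\delta-N)(\delta y)^{p-1}$, so that, setting $\psi(\tau) := y(\tau)^{2-p}$, the formal leading-order balance reads $\psi'(\tau) \approx (2-p)(N-\delta)\delta^{p-1}$. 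The plan is therefore to establish the one-sided quantitative bound $\psi'(\tau) \le K$ with $K$ slightly larger than $(2-p)(N-\delta)\delta^{p-1}$ for $\tau \geq \tau_0$, and integrate to obtain $y(\tau)^{2-p} \le K\tau + O(1)$, which is exactly (\ref{lli}).

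To implement this I use the first line of (\ref{sys}) to write $Y^{1/(p-1)} = \delta y - y'$ and the identity (\ref{jpn}) specialized to $\alpha = \delta$, namely $J_N'(r) = r^{N-1}(N-\delta - w^{q-1})w$, which is eventually nonnegative and of order $(N-\delta)r^{N-1}w$ because $w \to 0$. Combined with (\ref{ret}), $J_N(r) = e^{(N-\delta)\tau}(y-Y)$, integration of $J_N'$ produces an effective bound on $y-Y$, which in turn pins $Y$ to $(\delta y)^{p-1}$ with a controlled error and yields the desired upper bound on $\psi'$. The main obstacle is a bootstrap: a first pass based only on $y = o(e^{(\delta-\gamma)\tau})$ will give at best $y = O(\tau^{1/(2-p)+\varepsilon})$, and one must iterate the $J_N$-identity---tracking carefully the sharp coefficient in the dominant balance between $(\delta-N)Y$ and $\delta y - Y^{1/(p-1)}$ in (\ref{sys})---in order to close the estimate at the exact exponent $1/(2-p)$. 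Because the reduced second-order ODE for $y$ degenerates along $\{\delta y = y'\}$, a naive maximum principle on $y$ alone is not available; the comparison is cleanest in the phase plane $(y,Y)$, where one exhibits an invariant region of the form $\{y \le A\tau^{1/(2-p)},\ |Y - (\delta y)^{p-1}| \le B\, y^{p-2}\}$ for $A,B$ large enough, rather than constructing a direct scalar barrier for $y$.
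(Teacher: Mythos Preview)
Your heuristic and your choice of the quantity $y-Y$ (equivalently $J_N$ via (\ref{ret})) are exactly what the paper uses, but you have manufactured an obstacle that is not there. The paper's proof is a one-pass argument with no bootstrap and no invariant region.

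The point you are missing is elementary: once $w>0$, $w'<0$ eventually, Lemma~\ref{com}(i) forces $y$ to be monotone (the two alternatives there both require $\alpha\neq\delta$). If $y$ stays bounded the claim is trivial, so assume $y\nearrow\infty$; then $y'\ge 0$ and the first line of (\ref{sys}) gives $Y^{1/(p-1)}\le \delta y$, i.e.\ $Y\le(\delta y)^{p-1}$. Since $p<2$ and $y\to\infty$, this is already $Y=o(y)$, so $y\le(1+\varepsilon)(y-Y)$ for large $\tau$ --- no iteration is needed to ``pin $Y$ to $(\delta y)^{p-1}$''. Subtracting the two lines of (\ref{sys}) with $\alpha=\delta$ gives
\[
(y-Y)'=(N-\delta)Y-e^{-\delta(q-1)\tau}y^{q}\le (N-\delta)(\delta y)^{p-1}\le (N-\delta)\delta^{p-1}(1+\varepsilon)^{p-1}(y-Y)^{p-1},
\]
which integrates to $(y-Y)^{2-p}\le (2-p)(N-\delta)\delta^{p-1}(1+\varepsilon)\tau$, hence $y^{2-p}(\tau)\le C\tau$. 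That is the whole proof; it even yields the sharp constant (\ref{prec}).

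Your $J_N$-integration route is the same identity one step removed, but integrating $J_N'$ against the crude a~priori bound $y=o(e^{\varepsilon\tau})$ indeed only reproduces that crude bound, which is why you were led to a bootstrap. Working instead with the \emph{differential} form $(y-Y)'$ and feeding in $Y\le(\delta y)^{p-1}$ closes in one step. The phase-plane invariant region you sketch would presumably also work, but it is unnecessary machinery here.
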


\begin{proof}
From (\ref{esu}), we have $w(r)=O(r^{-\delta+\varepsilon})$ for any
$\varepsilon>0,$ hence $y(\tau)=O(e^{\varepsilon\tau});$ and $w$ has a finite
number of zeros, from Proposition \ref{zer},(iv), since $\alpha<N.$ We can
assume that $y$ is positive for large $\tau.$ From (\ref{sys}),
\[
(y-Y)^{\prime}=(N-\delta)Y-e^{\delta(q-1)\tau}y^{q}.
\]
From Lemma \ref{com},(i), $y$ is monotone for large $\tau.$ If $y$ is bounded,
then (\ref{lli}) is trivial. We can assume that $\lim_{\tau\rightarrow\infty
}y=\infty.$ Then also $\lim_{\tau\rightarrow\infty}Y=\infty,$ from Lemma
\ref{com},(iii), and $y^{\prime}\geq0$ for large $\tau,$ hence $Y^{1/(p-1)}%
<\delta y;$ then $Y=o(y)$ near $\infty,$ since $p<2;$ for any $\varepsilon>0,$
$y\leq(1+\varepsilon)(y-Y)$ for large $\tau,$ thus
\[
(y-Y)^{\prime}\leq(N-\delta)(\delta y)^{p-1}\leq(N-\delta)\delta
^{p-1}(1+\varepsilon)^{p-1}(y-Y)^{(p-1)}.
\]
Hence with a new $\varepsilon,$ for large $\tau,$ $(y-Y)^{2-p}(\tau
)\leq(N-\delta)\delta^{p-1}(2-p)(1+\varepsilon)\tau,$ which gives the upper
bound
\begin{equation}
y^{2-p}(\tau)\leq(N-\delta)\delta^{p-1}(2-p)(1+\varepsilon)\tau. \label{thn}%
\end{equation}
In particular (\ref{lli}) holds, and the estimate is more precise:
\begin{equation}
\lim\sup_{r\rightarrow\infty}r^{\delta}(\ln r)^{-1/(2-p)}w\leq((2-p)\delta
^{p-1}(N-\delta))^{1/(2-p)}. \label{prec}%
\end{equation}

\end{proof}

Next we precise the behaviour of the solutions according to the values of
$\alpha$.

\begin{proposition}
\label{dis}. Assume $q>1,p<2.$ Let $w$ be any solution $w$ of problem
(\ref{un}) such that $w$ has a finite number of zeros.

\noindent(i) If\textbf{ }$\delta<\min(\alpha,N),$ then either
\begin{equation}
\lim_{r\rightarrow\infty}r^{\delta}w=\pm\ell, \label{liml}%
\end{equation}

or
\begin{equation}
\lim_{r\rightarrow\infty}r^{\eta}w=c\neq0 \label{limc}%
\end{equation}

or $r^{\delta}w(r)$ is bounded near $\infty$ and $r^{\delta}w$ has no limit,
and
\begin{equation}
\lim_{r\rightarrow\infty}\inf r^{\delta}w\leq\ell\leq\lim\sup_{r\rightarrow
\infty}r^{\delta}w; \label{nol}%
\end{equation}
in the last case $p_{2}<p.$

\noindent(ii) If\textbf{ }$\alpha=\delta<N$, then either
\begin{equation}
\lim_{r\rightarrow\infty}r^{\delta}(\ln r)^{-1/(2-p)}w=\pm\eta,\qquad
\eta=((2-p)\delta^{p-1}(N-\delta))^{1/(2-p)}, \label{loc}%
\end{equation}
or (\ref{limc}) holds.

\noindent(iii) If $\alpha=\delta=N$, then
\begin{equation}
\lim_{r\rightarrow\infty}r^{N}w=k\neq0. \label{lyc}%
\end{equation}

\end{proposition}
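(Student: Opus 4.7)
My plan is to analyze all three cases through the logarithmic substitution \ref{cha} and the resulting system \ref{sys}, leveraging Lemma \ref{com} together with the boundedness estimates from Propositions \ref{tom} and \ref{ego}. Since $w$ has finitely many zeros, I may assume $w>0$ for large $r$, so that $y,Y>0$ for large $\tau$. In each case I would first determine whether $y$ is eventually monotone, then invoke Lemma \ref{com} to list the possible limits, and finally identify the correct one via either energy arguments or an asymptotic ODE analysis.

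For case (i), Proposition \ref{tom} yields $y$ (and hence $Y$) bounded. If $y$ is not monotone near $\infty$, Lemma \ref{com}(i) with $\delta<\min(\alpha,N)$ forces $\liminf y\le\ell\le\limsup y$ and gives (\ref{nol}); in that subcase $p>p_{2}$ is automatic, because otherwise $2\delta\le N$ makes the energy $W$ nonincreasing and drives $(y,Y)$ to a stationary point of the autonomous system, which would make $y$ eventually monotone. If instead $y$ is monotone with limit $l\ge 0$, Lemma \ref{com}(ii) combined with $(\delta-N)(\delta-\alpha)>0$ forces $l\in\{0,\ell\}$: $l=\ell$ is (\ref{liml}), while $l=0$ is handled in the spirit of Proposition \ref{des}(ii). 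Linearising \ref{yss} about $y=0$ gives the characteristic exponents $\delta$ and $-\mu$ with $\mu=(N-\delta)/(p-1)=\eta-\delta>0$ (here $p<p_{1}$ since $\delta<N$), and the nonlinear remainder in \ref{yss} is of strictly higher order because the factor $|\delta y-y'|^{2-p}\to 0$; hence $y(\tau)\sim c\,e^{-\mu\tau}$, i.e.\ $r^{\eta}w(r)\to c$. An iterative refinement as at the end of Proposition \ref{des}(ii) rules out $c=0$ and yields (\ref{limc}).

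For case (ii), Proposition \ref{ego} gives $y(\tau)=O(\tau^{1/(2-p)})$. Evaluating \ref{yss} at an extremum of $y$ shows that every extremum is a minimum, since the coefficient of $y$ in the reduced ODE tends to $(N-\delta)\delta>0$ and dominates the exponentially small source term; thus $y$ is eventually monotone. If $y$ is bounded, then $(\delta-N)(\delta-\alpha)=0$ and $\delta<N$ rule out all the alternatives of Lemma \ref{com}(ii) except $l=0$, and the argument of case (i) yields (\ref{limc}). If $y$ is unbounded, then $y\to\infty$ and by Lemma \ref{com}(iii) also $Y\to\infty$; from $y'=\delta y-Y^{1/(p-1)}\ge 0$ eventually one gets $Y\le(\delta y)^{p-1}=o(y)$, so $(y-Y)'=(N-\delta)Y-e^{-\delta(q-1)\tau}y^{q}=(N-\delta)(\delta y)^{p-1}(1+o(1))$; multiplying by $(2-p)(y-Y)^{1-p}$ and integrating gives $y^{2-p}(\tau)\sim(2-p)(N-\delta)\delta^{p-1}\tau$, which is exactly (\ref{loc}).

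For case (iii), $p=p_{1}$ and $\alpha=\delta=N=\eta$, and Proposition \ref{tom} gives $y$ bounded. Evaluating \ref{yss} at an extremum $\tau$ of $y>0$ leaves only the source term and yields $(p-1)y''(\tau)=-\delta^{2-p}y(\tau)^{2-p+q}e^{-\delta(q-1)\tau}<0$, so every extremum is a maximum, $y$ is eventually monotone, and has a limit $l\ge 0$; Lemma \ref{com}(ii) then gives $\lim Y=(\delta l)^{p-1}$. Since $r^{N}w=y$, what remains is to exclude $l=0$, and this is the main obstacle. Assume $l=0$: then $Y\to 0$ too, and $(Y-y)'=e^{-\delta(q-1)\tau}|y|^{q-1}y>0$ for large $\tau$ shows $Y-y$ is strictly increasing to $0$, so $Y<y$ eventually; but $y\to 0^{+}$ decreasing forces $y'<0$, hence $Y^{1/(p-1)}>\delta y$ from \ref{sys}, i.e.\ $Y>(\delta y)^{p-1}$, and since $p<2$ one has $(\delta y)^{p-1}/y=\delta^{p-1}y^{p-2}\to+\infty$ as $y\to 0^{+}$, contradicting $Y<y$. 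Thus $l>0$ and (\ref{lyc}) holds with $k=l\neq 0$; the delicacy here comes from the degenerate line of stationary points of the autonomous limit system when $\alpha=\delta=N$, which forces the contradiction to be extracted from the sign of $Y-y$ rather than from a transverse eigenvalue argument.
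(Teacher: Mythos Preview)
Your overall strategy in cases (i) and (ii) matches the paper's: boundedness from Propositions \ref{tom} and \ref{ego}, the dichotomy from Lemma \ref{com}, then a linearised comparison when $y\to 0$. Two places need tightening.

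In case (i), saying that for $p\le p_{2}$ the monotonicity of $W$ ``drives $(y,Y)$ to a stationary point'' is not enough: $W$ nonincreasing does not by itself force convergence of the trajectory. The paper's argument is more specific: at the extremal points $\tau_n$ of $y$ one has $Y(\tau_n)=(\delta y(\tau_n))^{p-1}$, so $W(\tau_n)$ reduces to an explicit function of $y(\tau_n)$ alone (plus an $o(1)$ term); convergence of $W(\tau_n)$ then pins down $\lim y(\tau_n)=\ell$, hence $y\to\ell$ and the subcase lands under (\ref{liml}) rather than (\ref{nol}).

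In case (ii) your asserted equality $(y-Y)'=(N-\delta)(\delta y)^{p-1}(1+o(1))$ is only half proved. From $y'\ge 0$ you get $Y\le(\delta y)^{p-1}$, which yields the \emph{upper} bound (this is exactly Proposition \ref{ego}); but the matching lower bound requires $Y\ge(\delta y)^{p-1}(1-o(1))$, i.e.\ $y'=o(y)$, which you have not established. The paper's device is to rewrite $(N-\delta)Y=(N-\delta)(\delta y-y')Y^{-(2-p)/(p-1)}$, use $Y\to\infty$ to make the coefficient of $y'$ small, and integrate the resulting inequality $\xi'\ge(N-\delta)\delta^{p-1}(1-\varepsilon)\xi^{p-1}$ with $\xi=(1+\varepsilon)y-Y$.

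Your argument for case (iii), by contrast, is correct and neater than the paper's. The paper bootstraps: from $\lim(Y-y)=0$ it extracts $y=O(e^{-\gamma_0\tau})$, iterates to $y=O(e^{-\gamma\tau})$ for every $\gamma>0$, and then reaches a contradiction via the substitution (\ref{cge}) with some $d>N$. Your one-line contradiction---$(Y-y)'>0$ forces $Y<y$, while $y'<0$ forces $Y>(Ny)^{p-1}\gg y$ as $y\to 0^{+}$ since $p<2$---bypasses the iteration entirely and exploits directly the degeneracy $\alpha=\delta=N$.
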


\begin{proof}
\textbf{(i) Case }$\delta<\min(\alpha,N).\medskip$

$\bullet$ First assume that $y$ is positive and monotone for large $\tau
$.\ Since it is bounded, from Lemma \ref{com},(ii) and (iv), either
$\lim_{\tau\rightarrow\infty}(y,Y)=M_{\ell}$ and (\ref{liml}) holds; or
$\lim_{\tau\rightarrow\infty}(y,Y)=(0,0),$ thus $y$ is nonincreasing to $0,$
and $\lim_{\tau\rightarrow\infty}y^{\prime}(\tau)=0.$ Comparing to the proof
of Proposition \ref{des}, we observe that (\ref{sit}) is no more true because
$\delta-\alpha<0.$ Nevertheless, for any small $\kappa$ and for $\tau\geq
\tau_{\kappa}$ large enough,
\begin{equation}
-(p-1)y^{\prime\prime}+(\delta p-N)y^{\prime}+(N-\delta-\kappa)\delta y\leq0.
\label{sot}%
\end{equation}
Let us fix $\kappa<N-\delta;$ since $\lim_{\tau\rightarrow\infty}y(\tau)=0,$
we can suppose that $y(\tau)\leq1$ for $\tau\geq\tau_{\kappa}.$ Then there
exists $\mu_{\kappa}<\mu,$ where $\mu$ defined at (\ref{mu}), with
$\mu_{\kappa}=\mu+O(K),$ such that, for any $\varepsilon>0,$ the function
$\tau\longmapsto\varepsilon+e^{-\mu_{\kappa}(\tau-\tau_{\kappa})}$ is a
solution of the corresponding equation on $\left[  \tau_{\kappa}%
,\infty\right)  $.It follows that $y(\tau)\leq\varepsilon+e^{-\mu_{\kappa
}(\tau-\tau_{\kappa})},$ from the maximum principle. Thus $y(\tau)\leq
e^{-\mu_{\kappa}(\tau-\tau_{\kappa})}$ on $\left[  \tau_{\kappa}%
,\infty\right)  $. We can choose $\kappa$ small enough such that $\mu_{\kappa
}(3-p)\geq\mu^{0}:=\mu(4-p)/2>\mu.$ As a consequence, $y(\tau)\leq e^{-\mu
^{0}(\tau-\tau_{\kappa})/(3-p)},$ hence $y^{\prime}(\tau)=O(e^{-\mu^{0}%
\tau/(3-p)}),$ from Proposition \ref{der}. From (\ref{yss}) there exists $C>0$
such that for $\tau\geq\tau_{C}$ large enough, $y(\tau)\leq1$ and
\[
-(p-1)y^{\prime\prime}+(\delta p-N)y^{\prime}+(N-\delta)\delta y\leq
Ce^{-\mu^{0}\tau}.
\]
There exists $A>0$ such that $-Ae^{-\mu^{0}\tau}$ is a particular solution of
the corresponding equation; then $\varepsilon+(1+A)e^{-\mu(\tau-\tau_{C}%
)}-Ae^{-\mu^{0}(\tau-\tau_{C})}$ is also a solution on $\left[  \tau_{\kappa
},\infty\right)  $. Then $y(\tau)\leq\varepsilon+(1+A)e^{-\mu(\tau-\tau_{C})}$
on $\left[  \tau_{\kappa},\infty\right)  $ from the maximum principle, then
$y(\tau)\leq(1+A)e^{-\mu(\tau-\tau_{C})}$. Thus $y(\tau)=0(e^{-\mu\tau}),$
which means $w(r)=O(r^{(p-N)/(p-1)})$ near $\infty.$ As in the proof of
Proposition \ref{des}, $r^{\eta}w$ has a limit $c$ at $\infty,$ and that
$c\neq0.$\medskip

$\bullet$ Next assume that $y$ is positive, but not monotone for large $\tau;$
then there exists an increasing sequence $\left(  \tau_{n}\right)  $ of
extremal points of $y,$ such that $\tau_{n}\rightarrow\infty,$ and (\ref{nol})
follows from Lemma \ref{com}. Assume $p\leq p_{2},$ or equivalently
$2\delta\leq N;$ the function $W$ is nonincreasing hence it has a limit
$\Lambda\geq-\infty.$ Computing at point $\tau_{n},$ where $Y(\tau
_{n})=(\delta y\left(  \tau_{n}\right)  )^{p-1},$ we find
\begin{align*}
W(\tau_{n})  &  =(\alpha-\delta)(\frac{y\left(  \tau_{n}\right)  ^{2}}%
{2}-\frac{\ell^{2-p}y\left(  \tau_{n}\right)  ^{p}}{p})+\frac{1}%
{q+1}e^{-\delta(q-1)\tau_{n}}\left\vert y(\tau_{n})\right\vert ^{q+1}\\
&  =(\alpha-\delta)(\frac{y\left(  \tau_{n}\right)  ^{2}(1+o(1)}{2}-\frac
{\ell^{2-p}y\left(  \tau_{n}\right)  ^{p}}{p}),
\end{align*}
thus $y(\tau_{n})$ has a finite limit, necessarily equal to $\ell$. Then
$\lim_{\tau\rightarrow\infty}y(\tau)=\ell.$\medskip

\noindent\textbf{(ii) Case }$\alpha=\delta<N.$ From Proposition \ref{zer} and
Lemma \ref{com},(i),(ii), $w$ has a finite number of zeros, and $\lim
_{\tau\rightarrow\infty}y=0$ or $\pm\infty$, and (\ref{prec}) holds. If
\textbf{ }$\lim_{\tau\rightarrow\infty}y=\infty,$ we write
\[
(y-Y)^{\prime}+e^{\delta(q-1)\tau}\left\vert y\right\vert ^{q-1}%
y=(N-\delta)Y^{1/(p-1)}Y^{-(2-p)/(p-1)}=(N-\delta)(\delta y-y^{\prime
})Y^{-(2-p)/(p-1)}%
\]
and $Y^{1/(p-1)}<\delta y,$ hence for large $\tau,$
\[
(y-Y)^{\prime}+(N-\delta)Y^{-(2-p)/(p-1)}y^{\prime}\geq y^{p-1}((N-\delta
)\delta^{p-1}-y^{2-p}e^{\delta(q-1)\tau}y^{q-1}..
\]
Since $y^{\prime}\geq0,$ and $\lim_{\tau\rightarrow\infty}Y=\infty,$ for any
$\varepsilon>0$ and for large $\tau,$
\[
(y-Y)^{\prime}+\varepsilon y^{\prime}\geq y^{p-1}((N-\delta)\delta
^{p-1}-e^{\delta(q-1)\tau}y^{q+1-p}).
\]
and $y(\tau)=O(\tau^{1/(2-p)})$ from (\ref{thn}).Thus for any $\varepsilon>0$
and for large $\tau,$%
\[
((1+\varepsilon)y-Y)^{\prime}\geq(N-\delta)\delta^{p-1}(1-\varepsilon
)y^{p-1}.
\]
Setting $\xi=(1+\varepsilon)y-Y,$ we deduce that
\[
\xi^{\prime}\geq(N-\delta)\delta^{p-1}(1-2\varepsilon)\xi^{p-1}%
\]
for large $\tau,$ which leads to the lower bound%
\begin{equation}
y^{2-p}(\tau)\geq(N-\delta)\delta^{p-1}(2-p)(1-3\varepsilon)\tau, \label{low}%
\end{equation}
and (\ref{loc}) follows from (\ref{prec}) and (\ref{low}). If $\lim
_{\tau\rightarrow\infty}y=0,$ (\ref{limc}) follows as in case (i).\medskip

\noindent\textbf{(iii) Case }$\alpha=\delta=N.$ From Proposition \ref{tom},
$y$ and $Y$ are bounded. Moreover $Y-y$ has a finite limit $K,$ and
$Y-y=K+O(e^{-(q-1)\tau}).$ And $y$ has a finite limit limit $l$ from Lemma
\ref{com},(i),(ii). Assume that $l=0.$ Then $\lim_{\tau\rightarrow\infty
}y^{\prime}=-\left\vert K\right\vert ^{(2-p)/(p-1)}K,$ hence $K=0.$ Thus there
exists $C>0$ such that $y^{\prime}=Ny-Y^{1/(p-1)}\geq Ny/2-Ce^{-(q-1)\tau
/(p-1)}$ for large $\tau.$ This implies $y=O(e^{-\gamma_{0}t})$ with
$\gamma_{0}=e^{-(q-1)\tau/(p-1)}.$ Assuming that $y=O(e^{-\gamma_{n}t})$, then
$(Y-y)^{\prime}=O(e^{-(q-1)\tau}y^{q})=O(e^{-(q-1+q\gamma_{n})\tau}),$ hence
$Y=y+O(e^{-(q-1+q\gamma_{n})\tau}).$ Then there exists another $C>0$ such that
$y^{\prime}$ $\geq Ny/2-Ce^{-(q-1+q\gamma_{n})\tau/(p-1)}$ for large $\tau,$
then $y=O(e^{-\gamma_{n+1}t}),$ with $\gamma_{n+1}=(q-1+q\gamma_{n})/(p-1).$
Observe that $\lim\gamma_{n}=\infty,$ thus $y=O(e^{-\gamma t})),$ thus
$w=O(r^{-\gamma}),$ for any $\gamma>0.$ We get a contradiction as in
Proposition (\ref{des}) by using the substitution (\ref{cge}) with $d>N.$
\end{proof}

\subsection{Oscillation or nonoscillation criteria}

As a consequence of Proposition \ref{tom}, we get a first result of existence
of oscillating solutions.

\begin{proposition}
\label{sog} Assume $q>1,p<2,$ and $N\leq\delta<\alpha$ or $N<\delta=\alpha.$
Then for any $m>0,$ any solution $w\not \equiv 0$ of problem (\ref{un}) has a
infinite number of zeros in $\left[  m,\infty\right)  .$
\end{proposition}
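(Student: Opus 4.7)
My plan is to argue by contradiction. Suppose some $w\not\equiv 0$ has only finitely many zeros in $[m,\infty)$. By Proposition \ref{zer}(iii)--(iv), all zeros of $w$ are isolated and finite in number, so $w$ has constant sign for large $r$. Replacing $w$ by $-w$ if needed, I assume $w>0$ for large $r$; then $w'<0$ for large $r$ by Proposition \ref{pro}, so the rescaled variables $(y,Y)$ of (\ref{cha}) are both positive for large $\tau$.

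Next I would pin down the asymptotics of $(y,Y)$. Proposition \ref{tom} applies to both sub-cases of the hypothesis and gives $y,Y$ bounded. Invoking Lemma \ref{com}(i), both alternatives in its dichotomy are excluded: $\max(\alpha,N)<\delta$ fails since $\alpha\geq\delta$, and $\delta<\min(\alpha,N)$ fails since $N\leq\delta$. Hence $y$ is monotone for large $\tau$ and admits a finite limit $l\geq 0$. Lemma \ref{com}(ii) then offers three possibilities, of which the last two ($l=\ell$ with $(\delta-N)(\delta-\alpha)>0$, and $\delta=\alpha=N$) are blocked by the strict inequalities in our hypothesis. Thus $l=0$, so $y\searrow 0^+$ and $Y\to 0^+$.

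For the contradiction, since $y'\leq 0$ eventually, the first equation of (\ref{sys}) gives $Y^{1/(p-1)}\geq\delta y$, i.e., $Y\geq(\delta y)^{p-1}$. On the other hand I would introduce $\phi=Y-y$ and use (\ref{sys}) to compute
\[
\phi'=(\delta-N)Y+(\alpha-\delta)y+e^{-\delta(q-1)\tau}y^q,
\]
which is strictly positive for large $\tau$: when $\delta<\alpha$ the term $(\alpha-\delta)y>0$, and when $\delta=\alpha$ (so necessarily $N<\delta$) the term $(\delta-N)Y>0$. Combined with $\phi\to 0$, this forces $\phi<0$, i.e., $Y<y$, for large $\tau$. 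Inserting the lower bound yields $(\delta y)^{p-1}<y$, which rearranges to $y>\delta^{(p-1)/(2-p)}$, a fixed positive constant, contradicting $y\to 0$.

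The main obstacle is the middle step: pinning down $(y,Y)\to(0,0)$ requires simultaneously invoking Proposition \ref{tom} and carefully unpacking both parts of Lemma \ref{com}, where the strict inequalities in the hypothesis play a subtle but essential role in ruling out every alternative to convergence to the origin. Once $y\to 0$ is established, the trap between $(\delta y)^{p-1}$ and $y$, furnished by the monotonicity of $\phi$, delivers the contradiction in one line.
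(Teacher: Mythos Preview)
Your proof is correct. The reduction to $(y,Y)\to(0,0)$ via Proposition \ref{tom} and Lemma \ref{com} matches the paper's, but your endgame differs: the paper splits into two cases, using the concavity relation (\ref{moi}) when $N<\delta$, and the separate inequality $(y-Y)'\leq 0\Rightarrow y\geq Y\Rightarrow Y'\geq NY/2$ when $N=\delta<\alpha$. Your single computation of $\phi'=(Y-y)'$ handles both cases at once, and the trap $(\delta y)^{p-1}\leq Y<y\Rightarrow y^{2-p}>\delta^{p-1}$ is a cleaner way to extract the contradiction than the paper's exponential-growth argument for $Y$. One small point: your appeal to Proposition \ref{zer}(iv) is misplaced, since its hypothesis $\alpha<\max(N,\eta)$ fails here (indeed $N\leq\delta$ forces $\eta\leq N\leq\delta\leq\alpha$); but you do not need it, because you are \emph{assuming} finitely many zeros in $[m,\infty)$, and $p<2$ alone guarantees every zero is isolated, so constant sign for large $r$ follows directly.
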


\begin{proof}
Suppose that is is not the case. Let $w\not \equiv 0,$ with for example $w>0$
and $w^{\prime}<0$ near $\infty,$ hence $y>0$ and $Y>0$ for large $\tau.$ If
$N<\delta=\alpha$, or $N<\delta=\alpha,$ then $y$ is bounded from Proposition
\ref{tom}. From Lemma \ref{com}, $y$ is monotone, and $\lim_{\tau
\rightarrow\infty}(y(\tau),Y(\tau))=(0,0).$ As in (\ref{moi}), if $N<\delta,$
then $y$ is concave for large $\tau,$ and we reach a contradiction. If
$\delta=N<\alpha,$ we find
\[
(y-Y)^{\prime}=(N-\alpha)y-e^{-\delta(q-1)\tau}\left\vert y\right\vert
^{q-1}y\leq0;
\]
then $y-Y$ is non increasing to $0,$ hence $y\geq Y,$ $Y^{\prime}\geq
NY-Y^{1/(p-1)}\geq NY/2$ for large $\tau,$ which is impossible since
$\lim_{\tau\rightarrow\infty}Y(\tau)=0.\medskip$
\end{proof}

Next we study the case where $\delta<\min(\alpha,N);$ recall that $\delta<N$
$\Leftrightarrow p<p_{1}.$ This case is difficult because the solutions could
be oscillatory, and even if they are not, they have three possible types of
behaviour near $\infty:$ (\ref{liml}), (\ref{limc}), or (\ref{nol}). Here we
extend to equation (\ref{un}) a difficult result obtained in (\cite{Bi1}) for
equation (\ref{hog}). Recall that for system (\ref{aut}), if $\alpha<\eta,$
there exist no solution satisfying (\ref{nol}), and for some $\alpha\in\left(
\eta,\alpha^{\ast}\right)  $ there do exist positive solutions satisfying
(\ref{nol}).

\begin{theorem}
\label{hard}Assume $p_{2}<p<p_{1}$ and $\delta<\alpha$. If $\alpha<\eta,$ (in
particular if $\alpha\leq N),$ then any solution $w(.,a)$ $(a\neq0)$ has a
finite number of zeros and satisfies (\ref{liml}) or (\ref{limc}).
\end{theorem}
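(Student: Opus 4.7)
The plan is to reduce the analysis to the asymptotics of system (\ref{sys}), which, as $\tau\to\infty$, is an exponentially small perturbation of the autonomous system (\ref{aut}). First, the finiteness of the zeros is free: the hypothesis $\alpha<\eta$ yields $\alpha<\max(N,\eta)$, so Proposition \ref{zer}(iv) applies and every $w(\cdot,a)$ has finitely many isolated zeros (all zeros being isolated since $p<2$). Up to symmetry we may thus assume that $w(r)>0$ for $r$ large, so that $y(\tau)>0$ and $Y(\tau)>0$ for large $\tau$ by Proposition \ref{pro} and (\ref{cha}); and since $\delta<\alpha$, Proposition \ref{tom} gives that $(y,Y)$ is bounded in $\mathbb{R}^{2}$.

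The core of the argument is to analyse the $\omega$-limit set $\Omega$ of $(y(\tau),Y(\tau))$ in the closed first quadrant. The source term appearing in (\ref{sys}) is $e^{-\delta(q-1)\tau}|y|^{q-1}y$ with $q>1$ and $y$ bounded, so the perturbation decays exponentially; by the standard theory of asymptotically autonomous systems, $\Omega$ is nonempty, compact, connected, and fully invariant under the autonomous flow (\ref{aut}). The decisive input is the phase-plane description of (\ref{aut}) established in \cite{Bi1}, recalled just before the theorem: when $\alpha<\eta$ (which in the range $p_{2}<p<p_{1}$ also forces $\alpha<\alpha^{\ast}$, hence the linearization of (\ref{aut}) at $M_{\ell}$ has negative trace and positive determinant), there exist no bounded trajectories of (\ref{aut}) in the open first quadrant having the oscillatory behaviour (\ref{nol}); every bounded positive trajectory has its $\omega$-limit equal to $\{(0,0)\}$ or $\{M_{\ell}\}$. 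Since $\Omega$ is connected and contained in the union of such trajectories, it must reduce to a single stationary point, and $(y(\tau),Y(\tau))$ converges either to $M_{\ell}$ or to $(0,0)$.

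In the first case $\lim_{r\to\infty}r^{\delta}w(r)=\ell$, which is (\ref{liml}). In the second case we follow the scheme of Proposition \ref{des}(ii): from $y(\tau)\to 0$ the equation (\ref{yss}) is, to leading order, the linear ODE $-(p-1)y''+(\delta p-N)y'+(N-\delta)\delta y=0$, whose decaying eigenvalue is $\mu=\eta-\delta>0$. A maximum principle comparison with $\varepsilon+e^{-\mu(\tau-\tau_{\kappa})}$ yields $y(\tau)\leq e^{-\mu\tau}$ eventually, that is $w(r)=O(r^{-\eta})$. Substituting $w(r)=r^{-\eta}y_{\eta}(\tau)$ and integrating (\ref{pli}) on $[\tau,\infty)$ shows that $y_{\eta}$ has a finite limit $c\geq 0$; the bootstrap argument of Proposition \ref{des}(ii) (faster-than-exponential decay forced by $c=0$, leading to a contradiction in (\ref{phis}) at the extremal points of $y_{d}$ for $d>\eta$) rules out $c=0$, producing (\ref{limc}).

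The main obstacle is the middle step, namely identifying $\Omega$: one must transcribe precisely enough of the phase-plane analysis of \cite{Bi1} to guarantee that, in the parameter window $\alpha<\eta$, no periodic orbit, heteroclinic cycle, or other nontrivial invariant set of (\ref{aut}) can occur in the closed first quadrant. Once this is secured, the exponentially small source term allows the classical invariance principle for asymptotically autonomous systems to transfer the conclusion to (\ref{sys}), after which translating the dichotomy on $\Omega$ into the asymptotics (\ref{liml}) and (\ref{limc}) is routine.
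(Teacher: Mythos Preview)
Your overall framework matches the paper's: reduce to the asymptotically autonomous system (\ref{sys}), invoke boundedness from Proposition~\ref{tom}, and appeal to the phase portrait of (\ref{aut}) from \cite{Bi1}. The finiteness of zeros and the endgame for the case $(y,Y)\to(0,0)$ are handled correctly (though the bootstrap you cite is actually carried out in the proof of Proposition~\ref{dis}(i), not Proposition~\ref{des}(ii), since here $\delta<\alpha$).

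The gap is exactly where you flag it, but it is a genuine gap, not a matter of transcription. Your sentence ``since $\Omega$ is connected and contained in the union of such trajectories, it must reduce to a single stationary point'' does not follow: even granting full invariance and the complete list of bounded autonomous orbits from \cite{Bi1}, a compact connected invariant set can still be the heteroclinic chain $\{0\}\cup\gamma\cup\{M_{\ell}\}$, where $\gamma$ is the branch of the unstable manifold of the saddle $(0,0)$ landing at the sink $M_{\ell}$. Nothing in the general invariance principle, and nothing you extract from \cite{Bi1}, excludes this as the $\omega$-limit set of the nonautonomous trajectory.

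The paper closes the gap by a different mechanism, not by further autonomous analysis. It first shows that if $\Gamma$ is not a singleton then $y$ is non-monotone, so the extremal points lie on $\mathcal{M}=\{(\lambda,(\delta\lambda)^{p-1}):\lambda>0\}$; since the stable manifold $\mathcal{T}_{a,s}$ of $0$ does not meet $\mathcal{M}$, the forward autonomous orbit through any accumulation point of these extrema reaches $M_{\ell}$, hence $M_{\ell}\in\Gamma$. The decisive step then uses the \emph{nonautonomous} energy $W$ of (\ref{wt}): because $\alpha<\alpha^{\ast}$ one has $M_{\ell}\notin\overline{\mathcal{S}}$, so $W'<0$ near $M_{\ell}$; and since $k_{\ell}=\mathcal{W}(M_{\ell})$ is the global minimum of $\mathcal{W}$ (here $\delta<N$), the existence of a subsequence $(y(\tau_{n}),Y(\tau_{n}))\to M_{\ell}$ forces either $W\to k_{\ell}$ monotonically---hence convergence to $M_{\ell}$---or a sequence of local minima $s_{n}$ of $W$ with $W'(s_{n})=0$, giving $\mathcal{U}(y(s_{n}),Y(s_{n}))>0$ and hence $(y(s_{n}),Y(s_{n}))\in\mathcal{S}$, while simultaneously $(y(s_{n}),Y(s_{n}))\to M_{\ell}\notin\overline{\mathcal{S}}$, a contradiction. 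Your route could be salvaged by instead invoking the trapping lemma for asymptotically autonomous systems (an asymptotically stable equilibrium of the limit flow that lies in the $\omega$-limit set must equal the whole $\omega$-limit set), combined with the backward-unboundedness of $\mathcal{T}_{a,s}$ to exclude any non-equilibrium point on the stable manifold of $0$; but neither of these ingredients appears in the proposal.
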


\begin{proof}
Assume $\alpha<\eta.$ From Proposition \ref{zer}, (iv), any solution
$w\not \equiv 0$ has a finite number of zeros. We can assume that $w(.,a)$ and
$w^{\prime}(.,a)<0$ for large $r,$ from Proposition \ref{pro}. Consider the
corresponding trajectory $\mathcal{T}_{n}$ of the nonautonomous system
(\ref{sys}) in the phase plane $(y,Y).$ From Proposition (\ref{tom}) it is
bounded near $\infty.$ Let $\Gamma$ be the limit set of $\mathcal{T}_{n}$ at
$\infty;$ then $y\geq0$ and $Y\geq0$ for any $(y,Y)\in\Gamma.$ From \cite{LR},
$\Gamma$ is nonempty, compact and connected, and for any point $P_{0}\in
\Gamma$, the positive trajectory $\mathcal{T}_{a}$ of the autonomous system
(\ref{aut}) issued from $P_{0}$ at time $0$ is contained in $\Gamma.$ From
\cite[Theorem 5.4]{Bi1} we have a complete description of the solutions of
system (\ref{aut}) when $\alpha<\eta.$ Since $\delta<N,$ the point $(0,0)$ is
a saddle point; since $\alpha<\alpha^{\ast}$ the point $M_{\ell}$ is a sink.
The only possible trajectories of (\ref{aut}) ending up in the set $y\geq
0$,$Y\geq0$ are either the points $0,M_{\ell},$ or a trajectory $\mathcal{T}%
_{a,s}$ starting from $\infty$ and ending up at $0$, or trajectories
$\mathcal{T}_{a}$ ending up at $M_{\ell}$. And $\mathcal{T}_{a,s}$ does not
meet the curve
\[
\mathcal{M=}\left\{  (\lambda,(\delta\lambda)^{p-1}):\lambda>0\right\}  .
\]
Then either $\Gamma=\left\{  0\right\}  ,$ or $\Gamma=\left\{  M_{\ell
}\right\}  ,$ or $\Gamma$ contains some point $P_{0}$ of $\mathcal{T}_{a,s},$
or $\mathcal{T}_{a},$ thus also the part of $\mathcal{T}_{a,s}$ or
$\mathcal{T}_{a}$ issued from $P_{0}.$ If $\Gamma=\left\{  M_{\ell}\right\}  $
or $\left\{  0\right\}  ,$ the trajectory converges to this point. If it is
not the case, then $y$ is not monotonous, then there exists a sequence of
extremal points of $y,$ such that $(y,Y)\in\mathcal{M}.$ Let $P_{0}$ be one of
these points$;$ then $P_{0}\not \in \mathcal{T}_{a,s},$ thus the autonomous
trajectory going through $P_{0}$ converges to $M_{\ell}.$ Then $\Gamma$
contains also $M_{\ell},$ thus there exists a sequence $\left(  \tau
_{n}\right)  $ tending to $\infty$ such that $\left(  y\left(  \tau
_{n}\right)  ,Y\left(  \tau_{n}\right)  \right)  $ converges to $M_{\ell}.$
Next we consider again the energy function $W$ defined at (\ref{ww}), and
still use the notations $\mathcal{W},\mathcal{U},\mathcal{H},\mathcal{L}%
,\mathcal{S}$ of Section \ref{S21}. Since $\alpha<\alpha^{\ast},$ the point
$M_{\ell}$ is exterior to the set $\mathcal{S}.$ Thus
\[
\lim W\left(  \tau_{n}\right)  =\mathcal{W}\left(  M_{\ell}\right)  =k_{\ell
}<0,
\]
from (\ref{kl}), since here $\delta<N;$ and $k_{\ell}=\min_{\left(
y,Y\right)  \in\mathbb{R}^{2}}\mathcal{W}\left(  y,Y\right)  ;$ and for large
$n,$ $\left(  y\left(  \tau_{n}\right)  ,Y\left(  \tau_{n}\right)  \right)  $
is exterior to $\mathcal{S},$ thus $\mathcal{U}\left(  y\left(  \tau
_{n}\right)  ,Y\left(  \tau_{n}\right)  \right)  <0,$ thus $W^{\prime}\left(
\tau_{n}\right)  <0.$ Either $W$ is monotone for large $\tau,$ then
$\lim_{\tau\rightarrow\infty}W\left(  \tau\right)  =k_{\ell},$ thus
$\lim_{\tau\rightarrow\infty}\mathcal{W}\left(  \tau\right)  =k_{\ell},$ which
implies $\lim_{\tau\rightarrow\infty}\left(  y\left(  \tau\right)  ,Y\left(
\tau\right)  \right)  =M_{\ell},$ and the trajectory converges to $M_{\ell}.$
Or there exists another sequence $\left(  s_{n}\right)  $ of minimal points of
$W,$ such that $s_{n}>\tau_{n}$ and $W\left(  s_{n}\right)  <W\left(  \tau
_{n}\right)  .$ Then $k_{\ell}\leq\lim\inf\mathcal{W}\left(  s_{n}\right)
\leq\lim\sup\mathcal{W}\left(  s_{n}\right)  =\lim\sup W\left(  s_{n}\right)
\leq k_{\ell}.$ Thus also $\lim_{\tau\rightarrow\infty}\left(  y\left(
s_{n}\right)  ,Y\left(  s_{n}\right)  \right)  =M_{\ell}.$ But
\[
0=W^{\prime}(s_{n})<\mathcal{U}\left(  y\left(  s_{n}\right)  ,Y\left(
s_{n}\right)  \right)
\]
thus $(y\left(  s_{n}\right)  ,Y\left(  s_{n}\right)  )\in\mathcal{S},$ which
is contradictory. Thus $\Gamma=$ $\left\{  M_{\ell}\right\}  $ or $\left\{
0\right\}  ,$ thus $w$ satisfies (\ref{liml}) or (\ref{limc}) from Proposition
(\ref{dis}).\medskip
\end{proof}

\begin{remark}
If $\alpha>\alpha^{\ast},$ the regular solutions of system (\ref{aut}) are
oscillatory, see \cite[Theorem 5.8]{Bi1}. We cannot prove the same result for
equation (\ref{un}), since it is a global problem, and system (\ref{sys}) is
only a perturbation of (\ref{aut}) near infinity; and the use of the energy
function $W$ does not allow to conclude.
\end{remark}

\subsection{Existence of positive solutions}

From Theorem \ref{hard}, we first prove the existence of positive solutions,
and their decay can be qualifieed as slow among the possible behaviours given
at Proposition \ref{dis}:

\begin{proposition}
\label{oro}Assume $\delta\leq\alpha<N$. Let $\underline{a}>0$ be defined at
Proposition \ref{zer}. Then for any $a\in\left(  0,\underline{a}\right]  $,
and $w(r,a)>0$ on $\left[  0,\infty\right)  ,$ and satisfies (\ref{liml}) if
$\delta<\alpha$, or (\ref{loc}) if $\alpha=\delta$.
\end{proposition}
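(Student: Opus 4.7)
The plan is to combine three ingredients already available: Proposition~\ref{zer}(i) for positivity, the dichotomies of Proposition~\ref{dis} for the list of candidate behaviours near $\infty$, and an energy/monotonicity argument with $J_N$ together with Theorem~\ref{hard} to eliminate all candidates except the one claimed.

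First, positivity is free: since $a\in(0,\underline a]$ and $\alpha<N$, Proposition~\ref{zer}(i) gives $w(r,a)>0$ on $[0,\infty)$, and from Proposition~\ref{pro}, $w'(r)<0$ for large $r$. In particular $w$ has no zeros, so the trichotomy of Proposition~\ref{dis}(i) applies when $\delta<\alpha$ (candidates (\ref{liml}), (\ref{limc}), (\ref{nol})), and the dichotomy of Proposition~\ref{dis}(ii) applies when $\alpha=\delta$ (candidates (\ref{loc}), (\ref{limc})). Note that $\delta\le\alpha<N$ forces $\delta<N$, hence $p<p_1$ and $\eta>N$ by (\ref{dn}).

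The core step is to rule out the fast decay (\ref{limc}) via the function $J_N$ of (\ref{gg}). Since $0<w(r)\le a\le\underline a=(N-\alpha)^{1/(q-1)}$, the integrand in (\ref{jpn}) satisfies $N-\alpha-w^{q-1}\ge 0$, so $J_N$ is nondecreasing; combined with $J_N(0)=0$ this yields $J_N\ge 0$ on $[0,\infty)$. Because $w'<0$ for large $r$, we have $J_N(r)\le r^N w(r)$ near $\infty$. If (\ref{limc}) held, then $r^N w(r)\sim c\, r^{N-\eta}\to 0$ since $\eta>N$, forcing $\lim_{r\to\infty}J_N(r)=0$. A nondecreasing function that vanishes at $0$ and at $\infty$ is identically zero, hence $J_N'\equiv 0$ and $(N-\alpha-w^{q-1})w\equiv 0$, i.e.\ $w\equiv\underline a$; but the constant $\underline a$ does not solve (\ref{un}) since it would require $\alpha\underline a+\underline a^{\,q}=0$, which is impossible.

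It remains to rule out the non-convergent regime (\ref{nol}) when $\delta<\alpha<N$. Proposition~\ref{dis}(i) already tells us (\ref{nol}) can happen only if $p_2<p$; if $p\le p_2$ there is nothing to do. If $p_2<p<p_1$, then Theorem~\ref{hard} applies: the assumption $\alpha<\eta$ is automatic since $\alpha<N<\eta$, so the only possible behaviours are (\ref{liml}) and (\ref{limc}), and the latter has just been excluded. Hence (\ref{liml}) holds in the case $\delta<\alpha$, and (\ref{loc}) holds in the case $\alpha=\delta$. The step I expect to require the most care is the $J_N$ elimination of (\ref{limc}); all the asymptotic classification work has been done in Propositions~\ref{dis} and Theorem~\ref{hard}, so beyond this monotonicity observation the proof is a bookkeeping matter.
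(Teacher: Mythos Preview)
Your proof is correct and follows essentially the same approach as the paper: positivity from Proposition~\ref{zer}(i), monotonicity of $J_N$ to eliminate the behaviour (\ref{limc}) (exactly as in the proof of Proposition~\ref{ora}), and Theorem~\ref{hard} together with Proposition~\ref{dis} to rule out (\ref{nol}). Your case analysis is in fact more careful than the paper's terse citation of Theorem~\ref{hard}, since you explicitly separate the cases $p\le p_2$ (where (\ref{nol}) is already excluded by Proposition~\ref{dis}(i)) and $\alpha=\delta$ (where Proposition~\ref{dis}(ii) gives only two candidates).
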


\begin{proof}
We still have $w(r,a)>0$ from Proposition \ref{zer}, and $J_{N}$ is
nondecreasing and $J_{N}(0)=0.$ If the conclusions were not true, then
$w(r)=O(r^{-\eta}),$ from Theorem \ref{hard}, then $r^{N}w=O(r^{N-\eta})$, and
$N<\eta$ from (\ref{dn}). Then $\lim\sup_{r\rightarrow\infty}J_{N}(r)$
$\leq0,$ and we reach a contradiction as at Proposition \ref{ora}.
\end{proof}

Next we show the existence of positive solutions with a (faster) decay in
$r^{-\eta}$ in the subcritical case:

\begin{theorem}
\label{fast2}Assume $p<2,$ $\delta<\alpha<N,$ and $1<q<q^{\ast}.$ Then there
exists $a>0$ such that $w(.,a)$ is positive and satisfies $\lim_{r\rightarrow
\infty}r^{\eta}w=c\neq0$.
\end{theorem}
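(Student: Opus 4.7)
The plan is to run a shooting argument analogous to the proof of Theorem \ref{fast}, using the dichotomy supplied by Theorem \ref{hard} in place of Theorem \ref{cont} (whose continuity of $L$ relied on (\ref{log}), which fails here). Under our hypotheses $\delta<\alpha<N$ and $q<q^{\ast}$ we have $p_2<p<p_1$ and $\alpha<N<\eta$, so Theorem \ref{hard} applies: every nontrivial $w(\cdot,a)$ has finitely many isolated zeros, and every positive solution satisfies either (\ref{liml}) (``slow'', $r^{\delta}w\to\ell$) or (\ref{limc}) (``fast'', $r^{\eta}w\to c\neq 0$).

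I would introduce
\[
A=\{a>0:w(r,a)>0 \text{ on }[0,\infty)\text{ and }\lim_{r\to\infty}r^{\delta}w(r,a)=\ell\},\qquad B=\{a>0:w(\cdot,a)\text{ has an isolated zero}\},
\]
and check that both are open, disjoint and nonempty. Proposition \ref{oro} gives $(0,\underline{a}]\subset A$ and Proposition \ref{sig} gives $[\overline{a},\infty)\subset B$ for some $\overline{a}$, while $B$ is open by the local continuous dependence on $a$ from Remark \ref{dep}. Granted that $A$ is also open, the point
\[
a^{\ast}=\sup\{a>0:(0,a]\subset A\}
\]
is finite and lies outside $A\cup B$. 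Since $p<2$, $w(\cdot,a^{\ast})$ has only isolated zeros; having none and starting from $a^{\ast}>0$, it is strictly positive. Theorem \ref{hard} then forces $w(\cdot,a^{\ast})$ to realize (\ref{limc}), proving the theorem.

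The main obstacle is the openness of $A$. I would argue in the phase plane of system (\ref{sys}): for $a_0\in A$ the trajectory $(y,Y)(\cdot,a_0)$ converges to the equilibrium $M_{\ell}$ of the autonomous limit system (\ref{aut}), which is a sink because $\alpha<\alpha^{\ast}$ in our range (as verified inside the proof of Theorem \ref{hard}). Fix a small neighborhood $\Omega$ of $M_{\ell}$ sitting in $\{y>0,Y>0\}$, exterior to the set $\mathcal{S}$ from (\ref{sos}) (so $\mathcal{U}\le -m<0$ on $\overline{\Omega}$), and contained in the basin of attraction of $M_{\ell}$ for the autonomous flow. Choose $\tau_0$ large enough that $(y(\tau_0,a_0),Y(\tau_0,a_0))$ lies in a smaller neighborhood $\Omega'\subset\Omega$ and that the nonautonomous term $e^{-\delta(q-1)\tau}|y|^{q-1}y$ of (\ref{sys}) is dominated by $m$ on $\Omega$ for every $\tau\geq\tau_0$. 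By Remark \ref{dep}, for $a$ in some neighborhood $V$ of $a_0$ we have $w(r,a)>0$ on $[0,e^{\tau_0}]$ and $(y(\tau_0,a),Y(\tau_0,a))\in\Omega'$. A Liapunov estimate using the energy $W$ of (\ref{wt}), whose derivative (\ref{wtp}) equals $\mathcal{U}$ minus an exponentially small term, is then strictly negative on $\Omega$ for $\tau\geq\tau_0$; this traps the trajectory in $\Omega$ and forces convergence to $M_{\ell}$, giving $a\in A$.

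The delicate technical point is controlling the transition window $\tau\simeq\tau_0$, where the perturbation still has nontrivial size: the exponential decay $e^{-\delta(q-1)\tau}$, together with the strict dissipation $\mathcal{U}\leq -m$ off $\mathcal{S}$, is exactly what makes the Liapunov argument close. Once $A$ is shown to be open, the remaining steps are routine: $\sup A$ is finite (being bounded above by $\inf B<\infty$), and the conclusion follows from Theorem \ref{hard} as indicated.
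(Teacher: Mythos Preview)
Your shooting structure is exactly the paper's: same sets $A$ and $B$, same inputs (Propositions \ref{oro} and \ref{sig} for nonemptiness, Remark \ref{dep} for openness of $B$), and the same conclusion via Theorem \ref{hard} at $a_{\sup}=\sup A$ or $a_{\inf}=\inf B$. The one substantive difference is how you prove that $A$ is open.

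The paper does this with the integral function $J_N$ of (\ref{gg})--(\ref{jpn}) rather than with the phase-plane Liapunov argument you sketch. For $a_0\in A$ one has $r^{\delta}w\to\ell$, hence (using (\ref{ret}) and $\ell-(\delta\ell)^{p-1}>0$, which follows from $\alpha<N$) $J_N(r,a_0)\to+\infty$; so one may pick $r_0$ with $w(r_0,a_0)<\underline a$, $J_N(r_0,a_0)>0$ and $J_N'(r_0,a_0)>0$. These three open conditions persist for $a$ near $a_0$. Since $J_N'(r)=r^{N-1}(N-\alpha-|w|^{q-1})w$, positivity of $w$ and the bound $w<\underline a$ self-propagate for $r\ge r_0$, forcing $J_N$ to stay increasing and positive; in particular $w$ never vanishes. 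If the perturbed solution satisfied (\ref{limc}) one would get $\lim J_N=-(c\eta)^{p-1}<0$, a contradiction; by Theorem \ref{hard} the only remaining option is (\ref{liml}), so $a\in A$.

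Your Liapunov route also works. A couple of technical points sharpen it: (a) since here $M_\ell$ is exterior to $\mathcal S$ one has $\mathcal U\le -m<0$ on a sublevel neighbourhood $\Omega=\{\mathcal W<k_\ell+\varepsilon,\;y>0\}$, and $W'=\mathcal U-\text{(nonnegative term)}\le -m$ on $\Omega$ for \emph{all} $\tau$, not just large $\tau$---so there is no ``transition window'' to worry about; the role of choosing $\tau_0$ large is only to make the difference $W-\mathcal W=O(e^{-\delta(q-1)\tau})$ smaller than $\varepsilon/2$ on the bounded set $\Omega$; (b) trapping in $\Omega$ alone does not give convergence, but it keeps $y$ bounded away from $0$, which rules out (\ref{limc}), and then Theorem \ref{hard} forces (\ref{liml}). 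What your approach buys is a direct dynamical picture (stability of the sink $M_\ell$); what the paper's $J_N$ argument buys is brevity and the avoidance of any phase-plane geometry: a single monotone scalar quantity, robust under perturbation of $a$, separates the two asymptotic regimes.
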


\begin{proof}
Let
\[
A=\left\{  a>0:w(.,a)>0\;\text{on }\left(  0,\infty\right)  \;\text{and }%
\lim_{r\rightarrow\infty}r^{\delta}w=\ell\right\}  ,\text{ }%
\]%
\[
B=\left\{  a>0:w(.,a)\quad\text{has at least an isolated zero}\right\}  .
\]
Then $A$ and $B$ are nonempty Propositions \ref{oro} and \ref{sig}, and
$A\supset\left(  0,\underline{a}\right]  $ and $B\supset\left[  \overline
{a},\infty\right)  ,$ and $B$ is open. Now we show that $A$ is open. Let
$a_{0}\in A.$ Then $J_{N}(.,a_{0})$ is increasing for large $r$ and tends to
$\infty,$ thus $J_{N}(r_{0},a_{0})>0$ and $J_{N}^{\prime}(r_{0},a_{0})>0$ for
$r_{0}$ large enough; and then there exists a neighborhood $\mathcal{V}$ of
$a_{0}$ such that $w(r,a)>0$ on $\left[  0,r_{0}\right]  $ and $J_{N}%
(r_{0},a)>0$ and $J_{N}^{\prime}(r_{0},a)>0$ for any $a\in\mathcal{V}.$ Then
$J_{N}^{\prime}(r_{0},a)>0$ for any $r\geq r_{0},$ since $w(.,a)$ is
decreasing. Then for any $a\in\mathcal{V}$, from Propositions \ref{dis} and
\ref{der}, either $\lim_{r\rightarrow\infty}r^{\eta}w=c>0,$ and $\lim
_{r\rightarrow\infty}r^{\eta+1}w^{\prime}=-c\eta,$ from (\ref{sysd}) and
(\ref{cgc}) with $d=\eta;$ then $\lim_{r\rightarrow\infty}J_{N}(.,a)=-c^{p-1}%
,$ which is impossible. Or necessarily $\lim_{r\rightarrow\infty}r^{\delta
}w(.,a)=\ell,$ thus $a\in A$. Let $a_{\inf}=\inf B>\underline{a}$ and
$a_{\sup}=\sup A<\overline{a}.$ Taking $a=a_{\inf}$ or $a_{\sup},$ then
$w(.,a)$ is positive and $\lim_{r\rightarrow\infty}r^{\eta}w=c.\medskip$
\end{proof}

\begin{remark}
\label{non}Under the assumptions of theorem \ref{fast2}, any solution $w(.,a)$
$(a\neq0)$ has a finite number of zeros, and $\lim_{r\rightarrow\infty
}r^{\delta}w(.,a)=\Lambda(a),$ with $\Lambda(a)=\pm\ell$ or $0.$ Here the
function $\Lambda$ is not continuous on $\left(  0,\infty\right)  .$ Indeed it
would imply that the set $\left\{  a>0:\Lambda(a)=\ell\right\}  $ is closed
and open in $\left(  0,\infty\right)  ,$ and non empty, which contradicts the
above results.\medskip
\end{remark}

At last in the supercritical case, we show the existence of grounds states for
any $a>0,$ and they have a (slow) decay:

\begin{theorem}
\label{pol}Assume $\delta\leq\alpha$. Let $w(r,a)$ be the solution of problem
(\ref{un}), (\ref{ini}).

\noindent(i) If $p\leq p_{2},$ then for any $a>0,$ $w(r,a)>0$ on $\left[
0,\infty\right)  $ and (\ref{liml}) or (\ref{loc}) holds.

\noindent(ii) If $p_{2}<p<p_{1}$ and $\alpha<(N-1)p^{\prime}/2$, and $q\geq
q_{\alpha}^{\ast}>q^{\ast},$ where $q_{\alpha}^{\ast}$ is given by
(\ref{eto}), then again $w(r,a)>0$ on $\left[  0,\infty\right)  $ and
(\ref{liml}) or (\ref{loc}) holds.
\end{theorem}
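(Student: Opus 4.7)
The plan is to adapt the Pohozaev--Pucci--Serrin energy argument of Theorem \ref{pos}, cases (ii) and (iii), to the regime $\delta\le\alpha$. I would introduce the function $V=V_{\lambda,\sigma,e}$ of (\ref{vla}) with the same parameters as in Theorem \ref{pos}: in part (i) take $\lambda=N$, $\sigma=N/2$, $e=\alpha-N/2$, and in part (ii) take $\lambda=2\alpha$, $\sigma=N-1-2\alpha/p'$, $e=\sigma-\alpha$. Using (\ref{dif}), the hypotheses make every term in $r^{1-\lambda}V'(r)$ nonpositive: in (i) via $p\le p_2$ and $q>1$, exactly as in (\ref{wac}); in (ii) via $q\ge q_\alpha^\ast$ (controlling the $|w|^{q+1}$-coefficient through (\ref{etol})), $\sigma>0$ (from $\alpha<(N-1)p'/2$), the inequality $\alpha\ge N/2$ (automatic since $\alpha\ge\delta>N/2$ when $p>p_2$), and the signs $w>0$, $w'\le0$ on $(0,r_0)$ coming from Proposition \ref{pro}.

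Since $V(0)=0$ by (\ref{pri}), we have $V\le0$ on $[0,\infty)$. For positivity, assume for contradiction that $w(\cdot,a)$ has a first zero $r_0>0$: then $V(r_0)=r_0^{\lambda}|w'(r_0)|^p/p'\ge0$, so $V\equiv V'\equiv0$ on $[0,r_0]$, and each nonpositive term in the expansion of $r^{1-\lambda}V'$ vanishes identically. In particular the squared term $(rw'+\kappa w)^2$ (with $\kappa=N/2$ in (i), $\kappa=\alpha$ in (ii)) forces $w\equiv c\,r^{-\kappa}$ on $(0,r_0)$, incompatible with $w(0)=a\ne0$. Hence $w(\cdot,a)>0$ on $[0,\infty)$.

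With positivity in hand $w$ has no zero, so Proposition \ref{dis} applies and the task reduces to narrowing the possible behaviors at infinity to the slow-decay ones (\ref{liml})/(\ref{loc}). The scenario (\ref{lyc}) is excluded in both parts by $p<p_1$. In (i), (\ref{nol}) is excluded since Proposition \ref{dis}(i) restricts it to $p>p_2$. In (ii), if $\delta<\alpha$ the oscillatory scenario (\ref{nol}) is ruled out by Theorem \ref{hard}, whose hypothesis $\alpha<\eta$ follows from $\alpha<(N-1)p'/2<\eta$ (the last inequality amounts to $p<p_1$); if $\delta=\alpha$, (\ref{nol}) is already absent from Proposition \ref{dis}(ii). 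It remains to rule out the fast decay (\ref{limc}): if $r^\eta w\to c\ne0$, then $w=O(r^{-\eta})$ and $w'=O(r^{-\eta-1})$ by Proposition \ref{der}, and the terms of $V(r)$ have orders $r^{\lambda-(\eta+1)p}$, $r^{\lambda-2\eta}$, $r^{\lambda-(q+1)\eta}$. All three exponents are negative: in (i) with $\lambda=N$ by $p<N$, $p\le p_2$, and $q>1$; in (ii) with $\lambda=2\alpha$ by $\alpha<(N-1)p'/2$, $\alpha<\eta$, and $q\ge q_\alpha^\ast$. Hence $V(r)\to0$; combined with $V(0)=0$ and $V'\le0$ this forces $V\equiv0$ and produces the same contradiction as in the positivity step.

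The main obstacle is the exclusion of the oscillatory regime (\ref{nol}) in part (ii): unlike the other obstructions it cannot be handled by the $V$-function alone, because under (\ref{nol}) only the crude bound $w=O(r^{-\delta})$ is available, giving $V(r)=O(r^{2(\alpha-\delta)})$ which does not decay when $\delta<\alpha$. Invoking the delicate dynamical-systems argument of Theorem \ref{hard} is therefore essential, and is what allows the $V$-argument to close in the full stated range.
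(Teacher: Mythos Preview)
Your proposal is correct and follows essentially the same approach as the paper: the same choices of $V_{\lambda,\sigma,e}$ (with $\lambda=N$, $\sigma=N/2$, $e=\alpha-N/2$ in (i) and $\lambda=2\alpha$, $\sigma=N-1-2\alpha/p'$, $e=\sigma-\alpha$ in (ii)), the same positivity-by-contradiction argument, and the same endgame of showing $V(r)\to 0$ under the fast-decay hypothesis (\ref{limc}) to force $V\equiv 0$. Your explicit verification that $\alpha<(N-1)p'/2<\eta$ when $p<p_1$, and your case split $\delta<\alpha$ versus $\delta=\alpha$ to handle (\ref{nol}) via Theorem \ref{hard} or Proposition \ref{dis}(ii) respectively, are exactly the ingredients the paper uses (the paper just writes ``Observe that $\alpha<\eta$'' without spelling out the inequality $(N-1)p'/2<\eta\Leftrightarrow p<p_1$).
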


\begin{proof}
We consider again the function $V=V_{\lambda,\sigma,e}$ defined at
(\ref{vla}).$\medskip$

\noindent(i) Suppose $p\leq p_{2}.$ As in Theorem \ref{pos} (ii) we take
$\lambda=N=2\sigma$ and $e=\alpha-N/2.$ Then $V^{\prime}\leq0$ from
(\ref{wac}) and in the same way $w(r)>0$ on $\left[  0,\infty\right)  .$ From
Proposition (\ref{dis}), if (\ref{liml}) does not hold, then $w=O(r^{-\eta}),$
$w^{\prime}=O(r^{-(\eta+1)})$ near $\infty$. Then by computation,
$V(r)=O(r^{-\eta}),$ thus $\lim_{r\rightarrow\infty}V(r)=0.$ Then $V\equiv0$
on $\left[  0,\infty\right)  $ which is contradictory.\medskip

\noindent(ii) Suppose $p_{2}<p<p_{1},$ and $\alpha<(N-1)p^{\prime}/2.$ As in
Theorem \ref{pos} (ii) we take $\lambda=2\alpha$ and $\sigma=N-1-2\alpha
/p^{\prime}$ and $e=\sigma-\alpha.$Observe that $\alpha<\eta,$ thus from
Theorem \ref{hard}, if (\ref{liml}) does not hold, then again $w=O(r^{-\eta
}),$ $w^{\prime}=O(r^{-(\eta+1)})$ near $\infty$. Then by computation,
$V(r)=O(r^{2\alpha-(N-1)p^{\prime}})$ near $\infty,$ hence $\lim
_{r\rightarrow\infty}V(r)=0$ and we reach again a contradiction.
\end{proof}

\section{Back to problem (\ref{lap})\label{S5}}

Here we apply to equation (\ref{pre}) the results of Section 3 with
$\alpha=\alpha_{0}=p/(q+1-p),$ and show our main result.\medskip

\begin{proof}
[Proof of Theorem \ref{prin}]0ne has $\alpha_{0}>0$ since $q>p-1,$ and
(\ref{log}) holds since $q>1.$

\noindent(i) The existence and behaviour of $w$ follows from Theorem \ref{exi}
and Proposition \ref{alp}.\medskip

\noindent(ii) Condition $q_{1}<q$ is equivalent to $\alpha_{0}<N,$ and
Proposition \ref{ora} applies.\medskip

\noindent(iii) If $q_{1}<q<q^{\star},$ then Theorem \ref{fast} shows the
existence of fast nonnegative decaying solutions $w$. For any $s\geq1,$ there
exists $C>0$ such that for any $t>0,$%
\begin{equation}
\left\Vert u(t)\right\Vert _{s}=Ct^{(N/s\alpha_{0}-1)/(q-1)}\left\Vert
w\right\Vert _{s}. \label{tns}%
\end{equation}
If $p>2,$ then $w$ has a compact support thus $u(t)\in L^{s}(\mathbb{R}^{N}).$
If $p<2,$ then $u$ is positive, and from Proposition (\ref{des}), $w$
satisfies \ref{kil}, with $\ell(N,p,q)$ and $\rho(N,p,q)$ given by (\ref{lim})
and (\ref{lam}) with $\alpha=\alpha_{0}:$%
\[
\ell(N,p,q)=\left(  \delta^{p-1}\frac{\delta-N}{\delta-\alpha_{0}}\right)
^{1/(2-p)}\qquad\rho(N,p,q)=\frac{1}{N}\left(  \frac{N(N-1)}{2(N-\alpha_{0}%
)}\right)  ^{(N+1)/2};
\]
hence again $u(t)\in L^{s}(\mathbb{R}^{N})$. Indeed either $p_{1}<p,$ thus
$N<\delta,$ and $w=O(r^{-\delta})$ at $\infty,$ thus $%
%TCIMACRO{\dint \nolimits_{1}^{\infty}}%
%BeginExpansion
{\displaystyle\int\nolimits_{1}^{\infty}}
%EndExpansion
r^{N-1-\delta s}dr<\infty;$ or $p<p_{1},$ thus $w=O(r^{-\eta})$ and $N<\eta,$
thus $%
%TCIMACRO{\dint \nolimits_{1}^{\infty}}%
%BeginExpansion
{\displaystyle\int\nolimits_{1}^{\infty}}
%EndExpansion
r^{N-1-(N-p)s/(p-1)}dr<\infty;$ or $p=p_{1},$ and $w=O(r^{-N}(\ln
r)^{-(N+1)/2}),$ and $%
%TCIMACRO{\dint \nolimits_{1}^{\infty}}%
%BeginExpansion
{\displaystyle\int\nolimits_{1}^{\infty}}
%EndExpansion
r^{N-1-Ns}(\ln r)^{-(N+1)/2}dr<\infty.$ Moreover $\lim_{t\rightarrow
0}\left\Vert u(t)\right\Vert _{s}=0$ whenever $s>N/\alpha_{0},$ from
(\ref{tns}). For fixed $\varepsilon>0,$ from Proposition \ref{tim}, either
$p>2$ and $\sup_{\left\vert x\right\vert \geq\varepsilon}\left\vert
u(x,t)\right\vert =0$ for $t\leq t(\varepsilon)$ small enough, or $p<2$ and
$\sup_{\left\vert x\right\vert \geq\varepsilon}\left\vert u(x,t)\right\vert
\leq C(\varepsilon)t^{(\delta/\alpha_{0}-1)/(q-1)}$ for $t\leq t(\varepsilon)$
small enough, and $\alpha_{0}<\delta,$ hence in any case $\lim_{t\rightarrow
0}\sup_{\left\vert x\right\vert \geq\varepsilon}\left\vert u(x,t)\right\vert
=0.$\medskip

\noindent(iv) The assertions follow from Theorem \ref{pn} (ii) and (iii), and
from Proposition \ref{des}.\medskip

\noindent(v) Here we applyTheorem \ref{pos} (i) and (ii). Indeed if $p>p_{2},$
and $q\geq q^{\star},$ then $\alpha_{0}\leq(N-p)/p<N/2$.\medskip

\noindent(vi) If $1<q\leq q_{1},$ then $N<\delta$ and $N\leq\alpha_{0}$ thus
all the solutions $w$ are changing sign, from Proposition \ref{zer}, (ii); and
there exists an infinity of fast decaying solutions $w$, from Theorem \ref{pn}
(ii); the estimates follow from Proposition \ref{tim}. Moreover in the case
$p<2,$ from Theorem \ref{osci}, $w$ has a finite number of zeros if
$\alpha_{0}$ is not too large, in particular if $\alpha_{0}<\alpha^{\ast}%
,$where $\alpha^{\ast}$ is defined at (\ref{eto}) ($\alpha^{\ast}<\delta),$
which means $1<p-1+p/\alpha^{\ast}<q\leq q_{1}.$This requires $N<\alpha^{\ast
},$ which means that $p$ is sufficiently close from $2$ , more precisely
$(2p-3)p>N(2-p)(p-1),$ in particular $p>3/2).$ On the contrary, there exists
$\bar{\alpha}\in(\max(N,\alpha^{\ast}),\delta)$ such that $w$ is oscillatory
if $\alpha_{0}>\bar{\alpha},$ which means $1<q<p-1+p/\bar{\alpha}.$
\end{proof}

\begin{remark}
If $q=q_{1},$ then $\alpha_{0}=N,$ thus for each of these functions $w,$ there
exists $C\in\mathbb{R}$ such that the corresponding function $u$ satisfies $%
%TCIMACRO{\dint \nolimits_{\mathbb{R}^{N}}}%
%BeginExpansion
{\displaystyle\int\nolimits_{\mathbb{R}^{N}}}
%EndExpansion
u(t)dx=C%
%TCIMACRO{\dint \nolimits_{\mathbb{R}^{N}}}%
%BeginExpansion
{\displaystyle\int\nolimits_{\mathbb{R}^{N}}}
%EndExpansion
wdx,$ and $\left\Vert u(t)\right\Vert _{1}=\left\vert C\right\vert \left\Vert
w\right\Vert _{1}$ for any $t>0;$ then there exists a sequence $\left(
t_{n}\right)  \rightarrow0$ such that $u(t_{n})$ converges weakly to a bounded
measure $\mu$ in $\mathbb{R}^{N};$ we still have $\lim_{t\rightarrow0}%
\sup_{\left\vert x\right\vert \geq\varepsilon}\left\vert u(x,t)\right\vert
=0,$ hence $\mu$ has its support at the origin; we cannot assert that $\mu$ is
a Dirac mass as in the case $p=2,$ see \cite{W1}, since we have no uniqueness
result for equation \ref{lap}, inasmuch as $u$ has not a constant sign.
\end{remark}


\begin{thebibliography}{99}                                                                                               %


\bibitem {AnL}L.R. Anderson and W. Leighton, \textit{Liapounov functions for
autonomous systems of second order}, J. Math. Appl. Appl. 23 (1968), 645-664.

\bibitem {AP}F.V. Atkinson and L.A. Peletier, \textit{On the radial solutions
of equation} $\Delta u+\frac{1}{2}x.\nabla u+\frac{1}{2}u+\left\vert
u\right\vert ^{p-1}u=0,$ C.R. Acad. Sci., 302 (1986), 99-101.

\bibitem {BG}B. Bettioui and A. Gmira, \textit{On the radial solutions of a
degenerate quasilinear elliptic equation in }$R^{N},$ Ann. Fac. Sci.Toulouse,
8 (1999), 411-438.

\bibitem {Bi}M.F. Bidaut-V\'{e}ron, Local and global behaviour of solutions of
quasilinear equations of Emden-Fowler type, Arc. Rat. Mech. Anal. 107 (1989), 293-324.

\bibitem {Bi1}M.F. Bidaut-V\'{e}ron, \textit{Self-similar solutions of the
p-Laplace heat equation: the fast diffusion case, }submitted\textit{.}

\bibitem {Bi2}M.F. Bidaut-V\'{e}ron, \textit{Self-similar solutions of the
p-Laplace heat equation: the case }$p>2$\textit{, }in preparation.

\bibitem {BiCV}M.F. Bidaut-V\'{e}ron, E. Chasseigne and L.\ V\'{e}ron,
\textit{Initial trace of solutions of some quasilinear parabolic equations
with absorption, }J. Funct. Anal., 193 (2002), 140-205.

\bibitem {BiPo}M.F. Bidaut-V\'{e}ron and S. Pohozaev, \textit{Nonexistence
results and estimates for some nonlinear elliptic problems,} Journal Anal.
Math. 84 (2001), 1-49.

\bibitem {BrPT}H. Brezis, L.A. Peletier and D.\ Terman, \textit{A very
singular solution of the heat equation with absorption}, Arc. Rat. Mech. Anal.
95 (1986), 185-209.

\bibitem {CDEW}T. Cazenave, F. Dickstein, M. Escobedo and F. Weissler,
\textit{Self-similar solutions of a nonlinear heat equation, J.\ Math. Sci.
Univ. Tokyo,} 8 (2001), 501-540.

\bibitem {DoHi}C. Dohmen and M. Hirose, \textit{Structure of the positive
radial solutions to the Haraux-Weissler equation, }Nonlinear Anal., 33 (1998), 51-69.

\bibitem {EK}M. Escobedo and O. Kavian, \textit{Variational problems related
to self-similar solutions of the heat equation}, Nonlinear Anal. 11 (1987), 1103-1133.

\bibitem {GV}O. Gil and J.L. Vazquez, \textit{Focussing solutions for the }%
$p$\textit{-Laplacian evolution equation, }Advances Diff. Equ. 2 (1997), 183-202.

\bibitem {GuV}M. Guedda and L.\ V\'{e}ron, \textit{Local and global properties
of solutions of quasilinear elliptic equations}, 76,1 (1988), 159-189.

\bibitem {Hi}M. Hirose, \textit{Structure of the positive radial solutions to
the Haraux-Weissler equation II, }Adv. Math. Sci. Appl. 9 (1999), 51-69.

\bibitem {HiYa1}M. Hirose and E. Yanagida, \textit{Bifurcation of rapidly
decreasing solutions for the Haraux-Weissler equation, }Adv. Math. Sci. Appl.
7 (1997), 619-636.

\bibitem {HiYa2}M. Hirose and E. Yanagida, \textit{Global structure of
self-similar solutions in a semilinear parabolic equation, }J. Math. Anal.
Appl. 244 (2000), 348-368.

\bibitem {HW}A. Haraux and F.B. Weissler, \textit{Non-uniqueness for a
semilinear initial value problem, }Indiana Univ. Math. J. 31 (1982), 167-189.

\bibitem {LR}H. Logemann and E.P. Ryan, \textit{Nonautonomous systems:
asymptotic behaviour and weak invariance principles,} J. Diff. Eqs. 189
(2003), 440-460.

\bibitem {PTW}L.A.\ Peletier, D.\ Terman, and F.B. Weissler, \textit{On the
equation} $\Delta u+\frac{1}{2}x.\nabla u+f(u)=0,$ Arch. Rat. Mech. Anal., 94
(1986), 83-99.

\bibitem {PS}P.\ Pucci and J. Serrin, \textit{Continuation and limit
properties for solutions of strongly nonlinear second order differential
equations}, Asymptotic Anal., 4 (1991), 97-160.

\bibitem {Qi}Y.W. Qi, \textit{The global existence and nonuniqueness of a
nonlinear degenerate equation,} Nonlinear Anal., 31 (1998), 117-136.

\bibitem {QW}Y.W. Qi and M.Wang, \textit{The global existence and finite time
extinction of a quasilinear parabolic equation, }Advances Diff. Eqs., 4
(1999), 731-753.

\bibitem {SZ}J. Serrin and H.\ Zou, \textit{Cauchy-Liouville and universal
boundedness theorems for quasilinear elliptic equations and inequalities},
Acta Mathematica, 189 (2002), 79-142.

\bibitem {SW}P. Souplet and F.B. Weissler, \textit{Regular self-similar
solutions of the nonlinear heat equation with initial data above the singular
steady state,} Ann. Inst. H.\ Poincar\'{e}, 2(2003), 213-235.

\bibitem {W1}F.B. Weissler, \textit{Asymptotic analysis of an ordinary
differential equation and non-uniqueness for a semilinear partial differential
equation,} Arch. Rat. Mech. Anal., 91 (1986), 231-245.

\bibitem {W2}F.B. Weissler, \textit{Rapidly decaying solutions of an ordinary
differential equation with applications to semilinear elliptic and parabolic
partial differential equations}, Arch. Rat. Mech. Anal., 91 (1986), 247-266.

\bibitem {Ya}E. Yanagida, \textit{Uniqueness of raopidly decreasing solutions
to the Haraux-Weissler equation, }J.\ Diff. Eqs., 127 (1996) 561-570.
\end{thebibliography}
\end{document}